\newtheorem{theorem}{Theorem}
\theoremstyle{plain}
\newtheorem{corollary}{Corollary}
\newtheorem{definition}{Definition}
\newtheorem{lemma}{Lemma}
\newtheorem{proposition}{Proposition}
\newtheorem{remark}{Remark}
\numberwithin{equation}{section}
\begin{document}
\title[Some general fixed-point theorems for nonlinear mappings]{Some
general fixed-point theorems for nonlinear mappings connected with one
Cauchy theorem }
\author{Kamal N. Soltanov}
\address{National Academy of Sciences of Azerbaijan, Baku, Azerbaijan}
\email{sultan\_kamal@hotmail.com }
\urladdr{https://www.researchgate.net/profile/Kamal-Soltanov/research\ }
\date{}
\subjclass[2010]{Primary 47H10, 47H04, 46A03; Secondary 54C99, 52A07}
\keywords{Nonlinear mapping, Fixed-point theorem, nonlinear equation and
inclusion, convexity}

\begin{abstract}
In this work, using a new geometrical approach we study to the existence of
the fixed-point of mappings that independence of the smoothness, and also of
their single- values or multi values. This work proved the theorems that
generalize in some sense the Brouwer and Schauder fixed-point theorems, and
also such type results in multi-valued cases. One can reckon this approach
is based on the generalization of the one theorem Cauchy and on the
convexity properties of sets. As the used approach is based on the geometry
of the image of the examined mappings that are independent of the
topological properties of the space we could to prove the general results
for almost every vector space. The general results we applied to the study
of the nonlinear equations and inclusions in VTS, and also by applying these
results are investigated different concrete nonlinear problems. Here
provided also sufficient conditions under which the conditions of the
theorems will fulfill.
\end{abstract}

\maketitle

\section{\protect\bigskip Introduction}

The aim of this work is to show the existence of the fixed-point of the
mapping one can study without usual conditions such as some smoothness of
the examined mapping and also the compactness of the subset into which acts
the examined mapping. Namely, we show the existence of the fixed-point of
the nonlinear mapping (in single-valued and multivalued cases) one can study
by using certain geometrical conditions and without the above-mentioned
conditions. This approach allows studying nonlinear equations and inclusions
with the mappings $f$ acting from one vector topological space ($VTS$) $X$
to another $VTS$ $Y$.

In the $1-$dimension case, one can assume to this question was studied by
Cauchy proving the theorem on the "means" value of the continuous functions,
that after is generalized by Hadamard to the functions that act between of $%
1-$dimension vector spaces and with the condition the image of some given
connected subset is the connected subset. Later can be to see, how need to
formulate the mentioned theorem for this remark. In particular, these
results answer the question of the existence of the fixed point. Later
Poincare proved in the $2-$dimension case the existence of the fixed-point
for some continuous mapping under sufficiently severe constraints and in
this article [39], he proposed to prove a result that can show when the
continuous mappings can have a fixed-point, moreover result of such type is
very important. By the question posed Poincare many mathematics began to
study the existence of the fixed point of the mappings. This problem was
solved in various variants by Brouwer, who was proved the well-known Brouwer
fixed-point theorem for the finite-dimensional cases, and by Banach, who was
proved the well-known Banach fixed-point theorem for the contractions
operator on the Banach spaces. For brevity, we won't cite other results
relative to this problem (for more see, e.g. [2-11, 13, 18, 28, 32, 35, 38,
], etc.). The problem of the existence of the fixed-point of mappings in the
infinite-dimensional case was investigated in many works (see, e.g. [2-10,
20, 28, 31, 32, 36, 39] and their references), where were used various
approaches. Schauder generalized the Brouwer theorem to the
infinite-dimensional case and later this result were generalized in the
joint work by Leray and Schauder. There exist some generalization of the
Banach theorem in the case when the operator is nonexpansive. We should be
noted the fixed-point theorems of Schauder and Fan-Kakutani were had certain
generalizations, but in this work are obtained the generalizations of these
and also the fixed-point theorems of Brouwer, Kakutani in the other sense.

This work is proved the theorems that generalize in some sense of the Cauchy
theorem (and some other results) to the finite-dimensional and the
infinite-dimensional cases moreover, are proved some new fixed-point
theorems. All obtained here results are based on the geometry of the image
of the examined mappings. The used here approach connected, in some sense,
with the topological method, more exactly with the Brouwer and Schauder
theorems. In this work is generalized also the lemma that was called
"acute-angled lemma" that is the variant of the Brouwer theorem. It should
be noted the "acute-angled lemma" together with Galerkin methods
successfully is applied under investigations of the nonlinear partial
differential equations and inequations (see, e.g. [14, 15, 30, 32, 36, 38,
40, 42], etc.).

The obtained general results are applied to the study of the solvability of
the nonlinear equations and inclusions in VTS, in particular, to the
investigations of the boundary value problems for nonlinear equations and
inclusions. Here provided also sufficient conditions, which show when the
conditions of the theorems are fulfilled.

So, here considered a nonlinear mapping $f:D\left( f\right) \subseteq
X\longrightarrow Y$, where $X$ , $Y$ are the $VTS$ and is investigated the
question: under which conditions a given $y\in Y$ belongs to the image $%
f\left( G\right) $ of some subset $G\subseteq D\left( f\right) $? It is
clear this question is equivalent to the question on the solvability of the
equation $f\left( x\right) =y$, and also the inclusion $f\left( x\right) \ni
y$ depending on the single-valued or multi-valued of mapping $f$. For the
study, the question has used the approach based on the geometrical structure
of the image $f\left( G\right) $ of the given subset $G\subseteq D\left(
f\right) $ that apriori isn't connected with any smoothness of the mapping $%
f $. Therefore, this approach one can call a geometrical approach.

Now we will lead the simple variant of the main fixed-point theorem of this
work on the Hilbert space.

\begin{theorem}
(Fixed-point theorem) Let $X$ be a Hilbert space, $B_{r}^{X}\left( 0\right)
\subset X$ is the closed ball and the mapping $f$ acting in $X$ be such that 
$f\left( B_{r}^{X}\left( 0\right) \right) \subseteq B_{r}^{X}\left( 0\right) 
$. Then if the image $f_{1}\left( B_{r}^{X}\left( 0\right) \right) \subset X$
be a open (or closed) convex set then there exists such $x_{0}\in
B_{r}^{X}\left( 0\right) $ that $f\left( x_{0}\right) =x_{0}$ (or $f\left(
x_{0}\right) \ni x_{0}$ if \ $f$ $\ $is the multi-valued), where $%
f_{1}\left( x\right) \equiv x-f\left( x\right) $ for $\forall x\in
B_{r}^{X}\left( 0\right) $.
\end{theorem}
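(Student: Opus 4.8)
The plan is to reduce the fixed-point assertion to the single claim that $0\in f_{1}\left( B_{r}^{X}\left( 0\right) \right) $. In the single-valued case $f\left( x_{0}\right) =x_{0}$ is literally $f_{1}\left( x_{0}\right) =x_{0}-f\left( x_{0}\right) =0$, and in the multi-valued case $x_{0}\in f\left( x_{0}\right) $ is equivalent to $0\in x_{0}-f\left( x_{0}\right) =f_{1}\left( x_{0}\right) $; so in either reading, once we know $0$ lies in the image set $C:=f_{1}\left( B_{r}^{X}\left( 0\right) \right) $ we have produced the desired $x_{0}$. I would then argue by contradiction, assuming $0\notin C$.

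Since $C$ is convex and, by hypothesis, open or closed, the geometric Hahn--Banach (separation) theorem applies to the convex set $C$ and the point $0\notin C$. In the closed case one may just take $\xi $ to be the metric projection of the origin onto $C$ in the Hilbert structure, obtaining $\xi \neq 0$ and $\alpha >0$ with $\left\langle \xi ,z\right\rangle \geq \alpha $ for all $z\in C$; in the open case separation gives $\xi \neq 0$ with $\left\langle \xi ,z\right\rangle >0$ for all $z\in C$ (after replacing $\xi $ by $-\xi $ if necessary), using the Riesz identification of $X$ with its dual. Writing a generic element of $C$ as $z=x-w$ with $x\in B_{r}^{X}\left( 0\right) $ and $w=f\left( x\right) $ (single-valued) or $w$ an arbitrary element of $f\left( x\right) $ (multi-valued), this reads $\left\langle \xi ,x\right\rangle -\left\langle \xi ,w\right\rangle \geq \alpha >0$ (resp. $>0$) for all admissible $x,w$.

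Now I would evaluate this inequality at the extremal point $x_{\ast }:=-r\,\xi /\left\Vert \xi \right\Vert \in B_{r}^{X}\left( 0\right) $, which minimizes $x\mapsto \left\langle \xi ,x\right\rangle $ over the ball and satisfies $\left\langle \xi ,x_{\ast }\right\rangle =-r\left\Vert \xi \right\Vert $. Since $w\in f\left( x_{\ast }\right) \subseteq B_{r}^{X}\left( 0\right) $ we have $\left\Vert w\right\Vert \leq r$, so by Cauchy--Schwarz $\left\langle \xi ,w\right\rangle \geq -\left\Vert \xi \right\Vert \left\Vert w\right\Vert \geq -r\left\Vert \xi \right\Vert $, and therefore $\left\langle \xi ,x_{\ast }\right\rangle -\left\langle \xi ,w\right\rangle \leq -r\left\Vert \xi \right\Vert +r\left\Vert \xi \right\Vert =0$, contradicting the strict positivity from the separation step. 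Hence $0\in f_{1}\left( B_{r}^{X}\left( 0\right) \right) $, which is what we wanted.

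The one place where real care is needed — and the only place the hypotheses are genuinely used — is the separation step: it is precisely the convexity of the image $f_{1}\left( B_{r}^{X}\left( 0\right) \right) $, together with its being open or closed, that licenses strict separation of $0$ from it in the possibly infinite-dimensional Hilbert space, and it is the invariance $f\left( B_{r}^{X}\left( 0\right) \right) \subseteq B_{r}^{X}\left( 0\right) $ that then forces the separating functional to fail at the extremal point $x_{\ast }$. The multi-valued statement requires no new idea; one only needs to keep the quantifier over $w\in f\left( x\right) $ in the correct order and observe that the final test-point estimate uses just a single selected value $w\in f\left( x_{\ast }\right) $.
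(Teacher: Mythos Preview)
Your argument is correct and follows essentially the same route as the paper: assume $0\notin f_{1}(B_{r}^{X}(0))$, invoke separation of the open/closed convex image from the origin to produce a functional $\xi$, and contradict this using the invariance $f(B_{r}^{X}(0))\subseteq B_{r}^{X}(0)$. The paper packages this contradiction as a general ``acute-angle'' criterion (Theorem~\ref{Th_7} and the corollaries \ref{C_3}, \ref{C_5}) and then observes, without writing out the computation, that $f_{1}=I-f$ satisfies it; you instead make the contradiction explicit by evaluating at the extremal point $x_{\ast}=-r\,\xi/\Vert\xi\Vert$ and applying Cauchy--Schwarz, which is exactly the verification the paper leaves implicit.
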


In this article, the above-posed questions in multi-dimensional (Section \ref%
{Sec_1}) and infinite-dimensional (Section \ref{Sec_2}) cases are
investigated. The case of the reflexive Banach spaces is studied separately
in Section \ref{Sec_3}. Section \ref{Sec_4} the nonlinear equations and
inclusions in Banach spaces by application of the obtained general results
are investigated. Section \ref{Sec_5} some sufficient conditions for
fulfillment the conditions of the theorems are obtained, and Section \ref%
{Sec_6} leads the concrete examples of problems.

\section{\label{Sec_1}Some Generalization of the Brouwer Fixed-Point Theorem
and its implications}

In the beginning we will provide some known results that are necessary for
next (see, [8, 9, 13, 16, 17, 19, 21-27, 35]).

We denote by $B_{r}^{R^{n}}\left( x_{0}\right) $ the closed ball in $R^{n}$, 
$n\geq 1$ and by $S_{r}^{R^{n}}\left( x_{0}\right) $ the sphere (boundary of
the ball $B_{r}^{R^{n}}\left( x_{0}\right) $) with a center $x_{0}\in R^{n}$
and radius $r>0$.

\begin{theorem}
\label{Th_1}(see, [3, 15]) Let $f$ acted in $R^{n}$ and for some $r>0$ on
the closed ball $B_{r}^{R^{n}}\left( 0\right) \subseteq D\left( f\right) $
satisfies conditions: 1) $f$ is continuous; 2) the inequality $\left\langle
f\left( x\right) ,x\right\rangle \geq 0$ for any $x\in S_{r}^{R^{n}}\left(
0\right) $. Then there exists, at least, one element $x_{1}\in
B_{r}^{R^{n}}\left( 0\right) $ such that $f\left( x_{1}\right) =0$. (For the
proof see, [3, 15].)
\end{theorem}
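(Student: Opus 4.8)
The plan is to deduce the conclusion from the Brouwer fixed-point theorem by a retraction-and-contradiction argument. Suppose, to the contrary, that $f(x)\neq 0$ for every $x\in B_{r}^{R^{n}}(0)$; then $\|f(x)\|>0$ on the whole ball, so by hypothesis~1) the map
\[
g(x):=-\,r\,\frac{f(x)}{\|f(x)\|}
\]
is well defined and continuous on $B_{r}^{R^{n}}(0)$, and it carries the ball into the sphere $S_{r}^{R^{n}}(0)\subseteq B_{r}^{R^{n}}(0)$. Applying the Brouwer fixed-point theorem to $g$ produces a point $x^{\ast}\in B_{r}^{R^{n}}(0)$ with $g(x^{\ast})=x^{\ast}$; since $\|g(x^{\ast})\|=r$, in fact $x^{\ast}\in S_{r}^{R^{n}}(0)$, so hypothesis~2) applies at $x^{\ast}$.

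The key computation is then the inner product at $x^{\ast}$. Using the relation $x^{\ast}=g(x^{\ast})=-r\,f(x^{\ast})/\|f(x^{\ast})\|$ we obtain
\[
\langle f(x^{\ast}),x^{\ast}\rangle=\Bigl\langle f(x^{\ast}),\,-r\,\frac{f(x^{\ast})}{\|f(x^{\ast})\|}\Bigr\rangle=-\,r\,\|f(x^{\ast})\|<0,
\]
which contradicts condition~2). Hence the assumption was false, and there exists $x_{1}\in B_{r}^{R^{n}}(0)$ with $f(x_{1})=0$.

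I would also record the equivalent degree-theoretic route, since it is the formulation that transfers to later sections: consider the homotopy $H(x,t)=t\,f(x)+(1-t)\,x$, $t\in[0,1]$. On the sphere $S_{r}^{R^{n}}(0)$ one has $\langle H(x,t),x\rangle=t\,\langle f(x),x\rangle+(1-t)\,r^{2}\geq(1-t)\,r^{2}$, which is strictly positive for $t\in[0,1)$, so $H(\cdot,t)$ is zero-free on the boundary for every $t<1$; if in addition $f$ is zero-free on $S_{r}^{R^{n}}(0)$ (otherwise the conclusion already holds with $x_{1}$ on the sphere), the homotopy is admissible on all of $[0,1]$. Homotopy invariance of the Brouwer degree then gives $\deg(f,B_{r}^{R^{n}}(0),0)=\deg(\mathrm{id},B_{r}^{R^{n}}(0),0)=1\neq 0$, and a zero of $f$ in the ball follows.

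The only point requiring care is the borderline case in which $f$ already vanishes somewhere on the sphere $S_{r}^{R^{n}}(0)$: this must be split off before normalizing $f$ to build the retraction $g$ (or before running the degree homotopy), but it is immediately harmless because $x_{1}$ can then be taken on $S_{r}^{R^{n}}(0)$ itself. Apart from that, the argument is entirely routine --- continuity of $g$ on the compact ball, the observation that any fixed point of $g$ has norm $r$, and the sign of a single inner product --- and it uses nothing beyond the Euclidean structure and Brouwer's theorem, which is exactly why no differentiability of $f$ enters. This is the content of the so-called ``acute-angled lemma.''
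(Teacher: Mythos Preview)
Your argument is correct: the retraction $g(x)=-r\,f(x)/\|f(x)\|$ combined with Brouwer's theorem is precisely the classical proof of the acute-angled lemma, and your handling of the fixed point on the sphere and the sign of $\langle f(x^\ast),x^\ast\rangle$ is clean. One small expository point: in the first (retraction) argument the ``borderline case'' of $f$ vanishing on the sphere is already absorbed by the contradiction hypothesis that $f\neq 0$ on the \emph{closed} ball, so no separate split is needed there; it is only the degree-theoretic variant that requires treating a boundary zero separately.

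As to comparison with the paper: the paper does not give its own proof of this statement at all --- it merely records the result and refers the reader to [3, 15] (Nirenberg; Lions). Your proof is exactly the standard one presented in those references, so in that sense there is nothing to contrast: you have supplied what the paper deliberately omitted. The degree-theoretic alternative you sketch is also standard and, as you note, is the form that generalizes most directly; the paper's later results (Lemmas~\ref{L_1}--\ref{L_3}) bypass both routes by replacing continuity with a convexity hypothesis on the image and arguing via separation of convex sets instead.
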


Well-known that the closed ball $B_{r}^{R^{n}}\left( 0\right) $ one can
change with the closed convex absorbing subset that is homeomorphic to the
ball. Below will be shown that this theorem is the generalization to the
finite-dimension case of one theorem Cauchy on continuous functions acted in 
$R^{1}$, moreover, this theorem is one of the answers to the posed question
in the case when mapping is continuous, and the space is $R^{n}$.

\begin{theorem}
\label{Th_2} (see, [21]) Any two nonintersecting nonempty convex sets of
linear space can separate if either one of these has a nonempty interior or
they be subsets of finite-dimension space.
\end{theorem}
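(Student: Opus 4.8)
The plan is to reduce both alternatives to a single problem: separating the origin from one convex set. Given disjoint nonempty convex sets $A$ and $B$, put $C:=A-B=\{a-b:\ a\in A,\ b\in B\}$. This is convex as a Minkowski sum of convex sets, and the hypothesis $A\cap B=\varnothing$ is precisely the statement $0\notin C$. If we can produce a linear functional $\ell\not\equiv 0$ with $\ell(c)\le 0$ for every $c\in C$, then $\ell(a)\le\ell(b)$ for all $a\in A$, $b\in B$, i.e. $\sup_A\ell\le\inf_B\ell$, which is the desired separating hyperplane. So it suffices to separate $\{0\}$ from a convex set $C$ with $0\notin C$, in each of the two situations.

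For the first case, suppose (say) $A$ has nonempty interior. Replace $C$ by $C_0:=\operatorname{int}(A)-B$: still convex, now \emph{open} (a union of translates of the open set $\operatorname{int}(A)$), and still avoiding $0$ since $a-b=0$ with $a\in\operatorname{int}(A)\subseteq A$, $b\in B$ would force $a=b\in A\cap B$. Fix $c_0\in C_0$ and set $E:=C_0-c_0$, an open convex neighbourhood of $0$ with $-c_0\notin E$. Let $p$ be the Minkowski gauge of $E$, $p(x)=\inf\{t>0:\ x/t\in E\}$; it is positively homogeneous and subadditive, satisfies $p<1$ on $E$, and $p(-c_0)\ge 1$. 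On the line $\mathbb{R}c_0$ define $f(\lambda c_0):=-\lambda$ and verify $f\le p$ there (for $\lambda\ge 0$ this is $-\lambda\le 0\le\lambda p(c_0)$; for $\lambda<0$ it is $-\lambda\le|\lambda|\,p(-c_0)$). Extend $f$ to a linear functional $\ell$ on the whole space with $\ell\le p$ by the Hahn--Banach theorem. Then $\ell(c_0)=-1$, so $\ell\not\equiv 0$; $\ell$ is bounded above on the nonempty open set $E$, hence continuous; and for $c\in C_0$ we get $\ell(c)=\ell(c-c_0)+\ell(c_0)\le p(c-c_0)-1<0$. Since $\operatorname{int}(A)$ is dense in $A$ (its closure contains $A$ when $A$ is convex with nonempty interior), the inequality $\ell(a)\le\ell(b)$ passes from $a\in\operatorname{int}(A)$ to all $a\in A$ by continuity of $\ell$, finishing this case.

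For the second case, $A$ and $B$ sit in a finite-dimensional space, so $C=A-B\subset\mathbb{R}^m$ for some $m$. Let $V:=\operatorname{aff}(C)$ be the affine hull of $C$. If $0\notin V$, take a linear functional on $\mathbb{R}^m$ that is a nonzero constant on $V$ and vanishes at $0$; it separates at once. If $0\in V$, then $V$ is a linear subspace and $C$ is a convex subset of $V$ with nonempty \emph{relative} interior (every nonempty convex subset of a finite-dimensional space has one), so inside $V$ we are back in the first situation applied to $C$ and $\{0\}$: if $0$ lies on the relative boundary of $C$ we use a supporting hyperplane of $C$ in $V$ at $0$; if $0\notin\overline{C}$ we use the elementary strict separation given by the nearest point $q\in\overline{C}$ to $0$, with $\ell_V=\langle q,\cdot\rangle$. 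Either way we obtain $\ell_V\not\equiv 0$ on $V$ with $\ell_V(0)\le\ell_V(c)$ for all $c\in C$, and we extend $\ell_V$ by $0$ on an algebraic complement of $V$ to get the separating functional on $\mathbb{R}^m$.

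The step I expect to be the main obstacle is the finite-dimensional case when $C$ has empty interior: there $C$ need not be closed, so one cannot directly invoke a nearest-point argument on $C$, and one is forced to descend to the affine hull $V$, carefully distinguish whether $0$ lies outside $V$, on the relative boundary of $C$, or strictly outside $\overline{C}$ within $V$, and then lift the lower-dimensional functional back to the ambient space. The open case is comparatively mechanical once the gauge $p$ and the Hahn--Banach extension are in place, its only genuine subtlety being the passage from $\operatorname{int}(A)$ to $A$, which rests on the density of the interior of a convex set with nonempty interior.
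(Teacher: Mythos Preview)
The paper does not prove Theorem~\ref{Th_2}; it is quoted from Klee~[21] as a known separation result and then used as a tool (e.g.\ in the proof of Lemma~\ref{L_3}). There is therefore no argument in the paper to compare your attempt against.

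Your proof is essentially the standard one and is correct in substance. Two small points are worth tightening. First, the statement is for a \emph{linear space} in Klee's sense (no topology assumed), so ``interior'' must be read as algebraic interior (what the paper later calls $C$-interior in the footnote after Theorem~\ref{Th_8}). Your sentence ``$\ell$ is bounded above on the nonempty open set $E$, hence continuous'' and the subsequent density/continuity passage from $\operatorname{int}(A)$ to $A$ are then not available as written. The algebraic substitute is immediate: for $a\in A$, $a_0$ in the algebraic interior of $A$, and $b\in B$, the point $(1-t)a+ta_0$ lies in the algebraic interior for every $t\in(0,1]$, so $(1-t)\ell(a)+t\ell(a_0)\le\ell(b)$, and letting $t\downarrow 0$ in this scalar inequality gives $\ell(a)\le\ell(b)$. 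Second, in the finite-dimensional case your split into ``$0$ on the relative boundary'' versus ``$0\notin\overline{C}$'' is more than you need: once $0\in V=\operatorname{aff}(C)$ and $0\notin C$, the set $C$ has nonempty interior \emph{relative to $V$} and still misses $0$, so you are precisely back in your first case inside the linear space $V$; the functional produced there extends by zero on an algebraic complement of $V$. With these cosmetic adjustments the argument is complete.
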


\begin{theorem}
\label{Th_3} (see, [22]) If $M$ is the closed convex subset of the locally
convex linear topological space $X$ and $x_{0}\notin M$ then there exists a
nonzero linear continuous functional $x_{0}^{\ast }\in X^{\ast }$ that
separates $M$ and $x_{0}$, i.e. will found constants $c>0$, $c_{0}>0$ such
that 
\begin{equation}
\sup \left\{ \left. \left\langle x,x_{0}^{\ast }\right\rangle \right\vert \
x\in M\right\} \leq c_{0}-c<c_{0}=\left\langle x_{0},x_{0}^{\ast
}\right\rangle ,  \label{2.1}
\end{equation}%
where $\left\langle \circ ,\circ \right\rangle $ is the dual form for the
pair $\left( X,X^{\ast }\right) $.
\end{theorem}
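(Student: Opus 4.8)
The plan is to reduce the statement to the geometric separation result already recorded as Theorem \ref{Th_2}. That theorem separates two disjoint nonempty convex sets when one of them has nonempty interior, but it yields only a non-strict inequality; the extra content needed here — the strict gap $c>0$ and the continuity of $x_{0}^{\ast }$ — will be produced by fattening the single point $x_{0}$ to a small open convex body, which is exactly the step that uses the hypothesis that $X$ is \emph{locally convex}.

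First I would use that $M$ is closed and $x_{0}\notin M$: the open set $X\setminus M$ contains $x_{0}$, so there is an open neighbourhood $U$ of $0$ with $(x_{0}+U)\cap M=\varnothing $, and, by local convexity, an absolutely convex (convex and balanced) open $W\subseteq U$ with $0\in W$, so that $(x_{0}+W)\cap M=\varnothing $. Put $A:=x_{0}+W$. Then $A$ is nonempty, open, convex, and disjoint from the nonempty convex set $M$, so Theorem \ref{Th_2} applies (with $A$ the set having nonempty interior) and furnishes a nonzero linear functional $\ell $ on $X$ and a real number $\gamma $ with $\ell (a)\leq \gamma \leq \ell (x)$ for all $a\in A$ and all $x\in M$.

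Next I would upgrade this separation. Since $A$ is open and $\ell \neq 0$, the inequality on $A$ is automatically strict: if $\ell (a)=\gamma $ for some $a\in A$, moving slightly inside $A$ in a direction along which $\ell $ increases contradicts $\ell \leq \gamma $ on $A$; hence $\ell (a)<\gamma $ for all $a\in A$. In particular the affine hyperplane $\ell ^{-1}(\gamma )$ misses the nonempty open set $A$, so it is not dense, hence closed, which forces $\ell $ to be continuous, i.e. $\ell \in X^{\ast }$. Now set $x_{0}^{\ast }:=-\ell \in X^{\ast }\setminus \{0\}$. From $\ell (x)\geq \gamma $ on $M$ we get $\sup \{\langle x,x_{0}^{\ast }\rangle :x\in M\}\leq -\gamma $, and from $x_{0}\in A$ we get $\langle x_{0},x_{0}^{\ast }\rangle =-\ell (x_{0})>-\gamma $. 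Taking $c_{0}:=\langle x_{0},x_{0}^{\ast }\rangle $ and $c:=\gamma -\ell (x_{0})>0$ yields $c_{0}-c=-\gamma $ and hence $\sup \{\langle x,x_{0}^{\ast }\rangle :x\in M\}\leq c_{0}-c<c_{0}=\langle x_{0},x_{0}^{\ast }\rangle $, which is \eqref{2.1}. (The sign normalization making $c_{0}>0$ is cosmetic — it is automatic once one translates so that $0\in M$ — the essential content being the strict separation.)

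The step I expect to require care rather than a quotation is obtaining the strictness: Theorem \ref{Th_2} applied directly to $\{x_{0}\}$ and $M$ would give only $\sup_{M}\langle \cdot ,x_{0}^{\ast }\rangle \leq \langle x_{0},x_{0}^{\ast }\rangle $, which is inadequate (it permits $x_{0}$ to lie on the separating hyperplane) and, moreover, a singleton has empty interior so the theorem does not even apply in general. Replacing $x_{0}$ by the open convex body $x_{0}+W$ is the device that simultaneously converts a non-strict separation into a strict one and guarantees continuity of the functional produced; and it is precisely this device that cannot be performed without a convex neighbourhood of $0$, i.e. without local convexity.
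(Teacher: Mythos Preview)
The paper does not prove Theorem~\ref{Th_3} at all: it is quoted from reference~[22] as a known separation result and used as a tool, with no argument supplied. So there is no ``paper's own proof'' to compare against; your proposal is the only proof on the table.

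Your argument is correct and is the standard reduction: use closedness of $M$ together with local convexity to inflate the point $x_{0}$ to a disjoint open convex body $x_{0}+W$, apply the algebraic separation of Theorem~\ref{Th_2} to $M$ and $x_{0}+W$, and then exploit openness of $x_{0}+W$ twice --- once to make the separating inequality strict on that side, and once to force the separating hyperplane to be closed (hence the functional continuous). The sign flip $x_{0}^{\ast }:=-\ell$ and the definitions $c_{0}:=\langle x_{0},x_{0}^{\ast}\rangle$, $c:=\gamma-\ell(x_{0})>0$ deliver \eqref{2.1}.

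Your caveat about the normalization $c_{0}>0$ is well taken: as stated in the paper this positivity is not intrinsic to the separation and holds only after a harmless translation placing $0\in M$ (then $0\leq c_{0}-c<c_{0}$). The essential content --- a nonzero \emph{continuous} functional and a \emph{strict} gap $c>0$ --- is exactly what you have established, and is all that the later arguments in the paper actually use.
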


\begin{theorem}
\label{Th_4} (see, [35]) Let $X$ be a real vector topological space, $M$ be
an open convex subset in $X$ and $N$ be a convex subset in $X$, and also $%
M\cap N=\varnothing $. Then there exist such linear continuous functional $%
x_{0}^{\ast }$ on $X$ and real number $\alpha \in R^{1}$ that $\left\langle
x,x_{0}^{\ast }\right\rangle \geq \alpha $ for $\forall x\in M$ and $%
\left\langle x,x_{0}^{\ast }\right\rangle <\alpha $ for $\forall x\in N$.
\end{theorem}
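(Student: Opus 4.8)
The plan is to deduce this separation theorem from the analytic Hahn--Banach extension theorem by a Minkowski-gauge argument; it is the geometric form of Hahn--Banach in a vector topological space. First I would reduce the separation of two sets to the separation of a single open convex set from the origin. Assuming $M,N\neq\varnothing$ (otherwise the claim is trivial), set
\[
C:=M-N=\{\,m-n:\ m\in M,\ n\in N\,\}=M+(-N).
\]
Then $C$ is convex because $M$ and $N$ are; $C$ is open because $C=\bigcup_{n\in N}(M-n)$ is a union of open translates of $M$; and $0\notin C$ because $M\cap N=\varnothing$. Fixing any $c_{0}\in C$, I would put $U:=C-c_{0}$, an open convex set containing $0$, hence an absorbing neighbourhood of $0$.

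Next I would introduce the Minkowski functional $p$ of $U$, defined by $p(z)=\inf\{\,t>0:\ t^{-1}z\in U\,\}$. Convexity and absorbency of $U$ make $p$ a finite sublinear functional on $X$, and openness of $U$ gives $U=\{\,z:\ p(z)<1\,\}$. Since $0\notin C$, the vector $x_{1}:=-c_{0}$ lies outside $U$, so $p(x_{1})\ge 1$. On the line $\mathbb{R}x_{1}$ define the linear functional $g(tx_{1}):=t$; then $g\le p$ there (for $t\ge0$ use $p(x_{1})\ge1$, for $t<0$ note $g(tx_{1})<0\le p(tx_{1})$). By the analytic Hahn--Banach theorem, $g$ extends to a linear functional $\ell$ on all of $X$ with $\ell\le p$ everywhere, and continuity of $\ell$ is immediate: on the neighbourhood $U\cap(-U)$ of $0$ one has $|\ell(z)|\le\max\{p(z),p(-z)\}\le 1$, so $\ell$ is bounded near $0$ and therefore continuous.

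Finally I would read off the separation. For $z\in U$ one has $\ell(z)\le p(z)<1$, while $\ell(c_{0})=-g(x_{1})=-1$, so $\ell(c)<0$ for every $c\in C=U+c_{0}$. Hence $\ell(m)<\ell(n)$ for all $m\in M$ and $n\in N$; in particular, fixing any $n\in N$ bounds $\ell$ above on $M$, so $\alpha:=\sup_{m\in M}\ell(m)$ is finite and $\ell(n)\ge\alpha$ for every $n\in N$. Since $\ell\neq 0$ (because $\ell(x_{1})=1$) and $M$ is open, $\ell(M)$ is an open interval, so $\alpha$ is not attained and $\ell(m)<\alpha$ for every $m\in M$. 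Taking $x_{0}^{\ast}=\ell$ (or its negative, with $\alpha$ replaced by $-\alpha$, depending on which side one wants the strict inequality) and this $\alpha$ then gives the required separation of $M$ and $N$.

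The only genuinely delicate point --- and the step I would single out as the main obstacle --- is passing from the harmless inequality $\langle m,x_{0}^{\ast}\rangle\le\alpha$ (which holds for every convex $N$) to a strict inequality; this is exactly where openness is indispensable, since a non-attained supremum is precisely what openness of the relevant set buys. Consequently the strict inequality must sit on the image of the \emph{open} set, so the correct bookkeeping puts the strict estimate on $M$ and the weak estimate ``$\ge\alpha$'' on $N$, and one has to align the sign of $x_{0}^{\ast}$ and the choice of $\alpha$ with the side on which $M$ lies. If neither set were open, only the weak separation $\langle m,x_{0}^{\ast}\rangle\le\alpha\le\langle n,x_{0}^{\ast}\rangle$ would survive, which is why the hypothesis that one set is open cannot simply be dropped; cf. Theorems~\ref{Th_2}--\ref{Th_3} for the forms available under other hypotheses such as finite-dimensionality or local convexity.
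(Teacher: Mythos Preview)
The paper does not supply its own proof of Theorem~\ref{Th_4}; the result is quoted from Schaefer~[35]. Your argument is the standard geometric Hahn--Banach proof and is correct: form the open convex set $C=M-N$ not containing $0$, translate to an open convex neighbourhood $U$ of $0$, dominate a linear functional on a line by the Minkowski gauge of $U$, extend by analytic Hahn--Banach, and read off the separation. The continuity check via boundedness on $U\cap(-U)$ and the strictness via openness of $\ell(M)$ (using that a nonzero continuous linear functional is an open map) are both clean.

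You are also right to flag the placement of the strict inequality. As the theorem is stated in the paper, the strict inequality sits on $N$ and the weak one on $M$; this is not achievable in general. For a counterexample take $X=\mathbb{R}$, $M=(-\infty,0)$ and $N=\{0\}$: any nonzero linear functional is $x\mapsto cx$, and there is no choice of $c\neq 0$ and $\alpha$ with $cm\ge\alpha$ for all $m<0$ together with $0=c\cdot 0<\alpha$. The conclusion your argument actually delivers, namely
\[
\langle m,x_0^{\ast}\rangle<\alpha\quad(m\in M),\qquad \langle n,x_0^{\ast}\rangle\ge\alpha\quad(n\in N),
\]
(or the sign-reversed version) is the correct one, and is how the result is stated in Schaefer. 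Replacing $x_0^{\ast}$ by $-x_0^{\ast}$ swaps the two half-spaces but cannot move the strict inequality off the open set, so the discrepancy in the paper's formulation is a genuine misprint rather than a matter of sign convention.
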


As, below will use also some concepts from the theory of vector spaces (see,
[5, 7, 12, 13, 25, 35, ]) therefore here we will reduce these. Let $X$ be a
vector space, $L\subset X$ called a linear manifold in $X$ (or an affine
subspace of $X$), if $L$ is a certain shift of some subspace $X_{0}$ of $X$,
i.e. there exists $x_{0}\in X$ such that $L=X_{0}+x_{0}$; $L\subset X$
called a hyperplane in $X$ if $L$ is a maximal affine subspace of $X$ that
is different of $X$: $L\equiv \left\{ \left. x\in X\right\vert \
\left\langle x,x_{0}^{\ast }\right\rangle =\alpha \right\} $ for some $%
\alpha \in R^{1}$ and some nonzero linear continuous functional $x_{0}^{\ast
}\in X^{\ast }$. If the convex set $M\subset X$ is the subset of affine
subspace generated over $M$ then the totality of the interior of the set $M$
is called relatively interior elements of $M$ from $X$ and denoted by $%
\mathit{ri}M$ . A convex set $K$ from the vector space $X$ is called a
convex cone with a vertex on zero of $X$ if $K$ invariant relative to all
homothety, i.e. $x\in K\Longrightarrow \alpha x\in K$, for $\forall \alpha
\in R_{+}^{1}$, if $0\in K\subset X$ then $K$ is called a pointed cone.

Let $R^{n}$ ($n\geq 1$) $n-$dimension Euclid space, $f$ be nonlinear mapping
acting in $R^{n}$, $B_{r}^{R^{n}}\left( x_{0}\right) \subset R^{n}$ be a
closed ball with a center $x_{0}$ and a radius $r>0$ and $%
S_{r}^{R^{n}}\left( x_{0}\right) $ be its boundary, denote by $D\left(
f\right) \subseteq $ $R^{n}$ the domain of $f$.

\begin{theorem}
\label{Th_5} Let the subset $G\subset R^{n}$ belong to $D\left( f\right) $
and the following conditions are fulfilled: 1) $f\left( G\right) $ be a
convex subset in $R^{n}$; 2) there exists a subspace $X$ of $R^{n}$ with the
dimension $0<k\leq n$ such that for any $x_{0}\in S_{1}^{R^{n}}\left(
0\right) \cap X$ there exists such $x_{1}\in G\cap X$ that 
\begin{equation}
\left\{ \left\langle y,x_{0}\right\rangle \left\vert \ y\in f\left(
x_{1}\right) \cap X\right. \right\} \cap R_{+}^{1}\neq \varnothing ,\quad
R_{+}^{1}=\left( 0,\infty \right) ,  \label{2.2}
\end{equation}%
holds (here the $R_{+}^{1}$ can be substituted by $R_{-}^{1}$). Then $0\in
f\left( G\right) $, i.e. $\exists \widehat{x}\in G$ that $0\in f\left( 
\widehat{x}\right) $ (if $f$ single-valued then $f\left( \widehat{x}\right)
=0$).
\end{theorem}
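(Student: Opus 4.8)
The plan is to argue by contradiction, reducing everything to a separation statement carried out \emph{inside} the finite-dimensional subspace $X$. First I would introduce the auxiliary subset of $X$
\[
  \widetilde{M}\ :=\ \bigcup_{x_{1}\in G\cap X}\bigl(f(x_{1})\cap X\bigr)\ \subseteq\ X ,
\]
and record two elementary facts. (a) $\widetilde{M}\neq\varnothing$: since $\dim X=k\geq 1$ the set $S_{1}^{R^{n}}(0)\cap X$ is nonempty, so condition 2) already forces $f(x_{1})\cap X\neq\varnothing$ for at least one $x_{1}\in G\cap X$. (b) $\widetilde{M}\subseteq f(G)$: indeed $f(x_{1})\cap X\subseteq f(x_{1})\subseteq f(G)$ for every $x_{1}\in G$. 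Because $X$ is a linear subspace, $\operatorname{conv}\widetilde{M}\subseteq X$; and because $f(G)$ is convex by hypothesis 1), we get $\operatorname{conv}\widetilde{M}\subseteq \operatorname{conv}f(G)=f(G)$. Hence it suffices to prove $0\in\operatorname{conv}\widetilde{M}$.

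The core step is exactly that claim. Suppose, to the contrary, that $0\notin\operatorname{conv}\widetilde{M}$. Then $\{0\}$ and $\operatorname{conv}\widetilde{M}$ are two disjoint, nonempty, convex subsets of the finite-dimensional space $X$, so by Theorem \ref{Th_2} they can be separated. Identifying $X$ with a finite-dimensional Euclidean space under the inner product inherited from $R^{n}$, the separating functional is represented by some vector, which (after possibly replacing it by its negative) we may take to be $u\in X\setminus\{0\}$ with $\langle y,u\rangle\le\alpha\le\langle 0,u\rangle=0$ for all $y\in\operatorname{conv}\widetilde{M}$ and some $\alpha\in R^{1}$; in particular $\langle y,u\rangle\le 0$ for every $y\in\widetilde{M}$. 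Now set $x_{0}:=u/\|u\|\in S_{1}^{R^{n}}(0)\cap X$ and apply hypothesis 2) to this $x_{0}$: it yields $x_{1}\in G\cap X$ and a point $y\in f(x_{1})\cap X\subseteq\widetilde{M}$ with $\langle y,x_{0}\rangle>0$, i.e.\ $\langle y,u\rangle>0$, contradicting $\langle y,u\rangle\le 0$. Therefore $0\in\operatorname{conv}\widetilde{M}\subseteq f(G)$, which means there is $\widehat{x}\in G$ with $0\in f(\widehat{x})$, and $f(\widehat{x})=0$ when $f$ is single-valued. The variant with $R_{-}^{1}$ in place of $R_{+}^{1}$ is obtained by running the same argument with $-x_{0}$, which is harmless since $x_{0}$ ranges over all of $S_{1}^{R^{n}}(0)\cap X$.

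I do not anticipate a deep obstacle; the delicate points are bookkeeping rather than conceptual. One must make sure the separating functional is genuinely nonzero and is realized by a \emph{unit} vector lying in $X$ (so that condition 2) is applicable to it), and one must keep straight that the directional positivity supplied by 2) is available on $\widetilde{M}$ and not on the a priori larger sets $f(G\cap X)$ or $f(G)\cap X$, while it is the inclusion $\widetilde{M}\subseteq f(G)$ together with convexity of $f(G)$ that transports the origin back into $f(G)$. An alternative route would be to separate $\{0\}$ from $f(G)$ directly in $R^{n}$ and then project the separating functional onto $X$; that works too, but one then has to rule out the projection being zero, so staying inside $X$ from the outset is cleaner.
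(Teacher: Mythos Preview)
Your argument is correct. The device of introducing $\widetilde{M}\subseteq X$, taking its convex hull inside $X$, separating there, and only at the end using convexity of $f(G)$ to embed $\operatorname{conv}\widetilde{M}$ back into $f(G)$, cleanly resolves the mismatch between $f(G)\subseteq R^{n}$ and the subspace $X$ on which condition 2) is formulated.

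The paper organizes the proof differently. It first establishes several special cases (Lemmas \ref{L_1}--\ref{L_3} and Corollary \ref{C_1}), each proved by the same separation-and-contradiction mechanism, and then assembles Theorem \ref{Th_5} from them: the case $X=R^{n}$ is handled by Lemmas \ref{L_1}--\ref{L_2}, while for a proper subspace $X=R^{k}$ the paper works in the ambient $R^{n}$, analyzes the affine hull of $f(G)$, argues that it must be a linear subspace coinciding with $X$, and then reduces to Lemma \ref{L_3}. Your proof bypasses this scaffolding entirely by staying inside $X$ from the outset and never needing to locate the affine hull of $f(G)$; what the paper's layered approach buys in return is that Lemmas \ref{L_1}--\ref{L_3} (in particular the generalized ``acute-angle'' lemma) are isolated as standalone statements that it reuses later in the article. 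Your remark that one could instead separate $\{0\}$ from $f(G)$ directly in $R^{n}$ and then project onto $X$ is in spirit closer to what the paper does, and you are right that the projection-vanishing issue makes that route less tidy.
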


\begin{remark}
\label{R_1}We will formulate this result for $R^{1}$ in the following form.
Let the mapping $f$ acting in $R^{1}$and the image $f\left( G\right) $ of
some bounded subset $G\subset R^{1}$ is the connected subset of $R^{1}$ then
a point $C\in R^{1}$ belong to $f\left( G\right) $ and consequently there is
point $c\in G$ such that $C\in f\left( c\right) $ if there exist such points 
$a,b\in G$ ($a<b$) that the inequations $\left( f\left( a\right) -C\right)
\cdot \left( -1\right) \geq 0$ and $\left( f\left( b\right) -C\right) \cdot
\left( 1\right) \geq 0$ are fulfilled. The proof follows from the
connectivity of $f\left( G\right) $.

This result is generalized of the results Cauchy and Hadamard as
one-dimension vector space is equivalent to $R^{1}$.
\end{remark}

Before the proof of Theorem \ref{Th_5}, we will prove some particular
variants of this, which have independent interest. In beginning, we bring a
simple variant.

\begin{lemma}
\label{L_1} Let for some $r>0$ the image $f\left( B_{r}^{R^{n}}\left(
0\right) \right) $ of the ball $B_{r}^{R^{n}}\left( 0\right) $ is closed (or
opened) convex set and is fulfilled the inequation $\left\{ \left\langle
y,x\right\rangle \left\vert \ y\in f\left( x\right) \right. \right\} \cap
\left( 0,\infty \right) \neq \varnothing $ for $\forall x\in
S_{r}^{R^{n}}\left( 0\right) $ then $0\in f\left( B_{r}^{R^{n}}\left(
0\right) \right) $, i.e. $\exists x_{1}\in B_{r}^{R^{n}}\left( 0\right) $
such that $0\in f\left( x_{1}\right) $. (if $f$ single-valued then $f\left(
x_{1}\right) =0$).
\end{lemma}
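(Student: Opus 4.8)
The plan is to argue by contradiction, the whole content being a single application of the separation theorems quoted above. Suppose $0\notin K$, where $K:=f\left( B_{r}^{R^{n}}\left( 0\right) \right) \subseteq R^{n}$ is, by hypothesis, a closed (respectively, open) convex set.

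First I would separate $K$ from the point $0$. If $K$ is closed convex, Theorem \ref{Th_3} applies (since $R^{n}$ is a locally convex topological vector space) and, under the identification $\left( R^{n}\right) ^{\ast }\cong R^{n}$ via the Euclidean form, yields a vector $v\in R^{n}\setminus \left\{ 0\right\} $ with a strict gap $\sup \left\{ \left\langle y,v\right\rangle \mid y\in K\right\} <\left\langle 0,v\right\rangle =0$. If instead $K$ is open convex, I would apply Theorem \ref{Th_4} with $M=K$ and $N=\left\{ 0\right\} $ (a point being convex and disjoint from $M$), obtaining $w\in R^{n}\setminus \left\{ 0\right\} $ and $\alpha \in R^{1}$ with $\left\langle y,w\right\rangle \geq \alpha $ for all $y\in K$ and $\left\langle 0,w\right\rangle =0<\alpha $. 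In either case, after replacing the separating vector by its negative if necessary, I obtain $v\in R^{n}\setminus \left\{ 0\right\} $ and $\delta >0$ with
\[
\left\langle y,v\right\rangle \leq -\delta <0\qquad \text{for every }y\in K.
\]

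Next I would test condition of the lemma at a suitably chosen boundary point. Put $x^{\ast }:=r\,v/|v|\in S_{r}^{R^{n}}\left( 0\right) $. By hypothesis there is $y\in f\left( x^{\ast }\right) $ with $\left\langle y,x^{\ast }\right\rangle >0$. But $x^{\ast }\in S_{r}^{R^{n}}\left( 0\right) \subseteq B_{r}^{R^{n}}\left( 0\right) $, hence $y\in f\left( x^{\ast }\right) \subseteq K$, so $\left\langle y,v\right\rangle \leq -\delta $ and therefore $\left\langle y,x^{\ast }\right\rangle =\left( r/|v|\right) \left\langle y,v\right\rangle \leq -\left( r/|v|\right) \delta <0$, a contradiction. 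Thus $0\in K=f\left( B_{r}^{R^{n}}\left( 0\right) \right) $. Finally, since $f\left( B_{r}^{R^{n}}\left( 0\right) \right) =\bigcup_{x\in B_{r}^{R^{n}}\left( 0\right) }f\left( x\right) $, the membership $0\in f\left( B_{r}^{R^{n}}\left( 0\right) \right) $ means exactly that $0\in f\left( x_{1}\right) $ for some $x_{1}\in B_{r}^{R^{n}}\left( 0\right) $ (with $f\left( x_{1}\right) =0$ when $f$ is single-valued), which is the assertion; the variant with $R_{-}^{1}$ in place of $R_{+}^{1}$ is symmetric, taking $x^{\ast }=-r\,v/|v|$.

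I do not expect a genuine obstacle: the difficulty is entirely absorbed into Theorems \ref{Th_3} and \ref{Th_4}. The only points needing a little care are the sign bookkeeping, so that the boundary point $x^{\ast }$ lies on the side of the sphere that makes the hypothesis clash with the separation inequality; the verification that the "closed" and "open" alternatives are both covered (by Theorem \ref{Th_3} and Theorem \ref{Th_4} respectively); and keeping the multivalued reading of $f$ throughout, which the "$\cap \neq \varnothing $" phrasing of the hypothesis and conclusion already handles and which adds nothing new.
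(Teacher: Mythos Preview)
Your proposal is correct and follows essentially the same route as the paper's proof: assume $0\notin f(B_r^{R^n}(0))$, separate with a linear functional, normalize the separating vector to lie on $S_r^{R^n}(0)$, and contradict the hypothesis at that point. You are more explicit than the paper in distinguishing the closed and open cases via Theorems~\ref{Th_3} and~\ref{Th_4} respectively, and in the sign bookkeeping, but the argument is the same.
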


\begin{proof}
The proof we bring from inverse. Let $0\notin f\left( B_{r}^{R^{n}}\left(
0\right) \right) \equiv M$. According to condition $M$ \ is closed or opened
convex set in $R^{n}$ then due to the separation theorem of convex subsets
there exists a linear bounded functional $\overline{x}\in R^{n}$ separating
of $M$ and zero under the conditions of the lemma. Since $%
B_{r}^{R^{n}}\left( 0\right) $ is the absorbing set of $R^{n}$, therefore
one can assume that functional $\overline{x}$ belongs to $%
S_{r}^{R^{n}}\left( 0\right) $. Whence we get $\left\langle y,\overline{x}%
\right\rangle <0$ for $\forall y\in f\left( \overline{x}\right) $\ that
contradicts the condition of the lemma. Lemma proved.
\end{proof}

Analogously is proved the following result.

\begin{lemma}
\label{L_2} Let $f\left( B_{r}^{R^{n}}\left( 0\right) \right) $ is the covex
set and on the sphere $S_{r}^{R^{n}}\left( 0\right) $ is fulfilled the
inequation $\left\{ \left\langle y,x\right\rangle \left\vert \ y\in f\left(
x\right) \right. \right\} \cap \left( 0,\infty \right) \neq \varnothing $
for $\forall x\in S_{r}^{R^{n}}\left( 0\right) $ then $0\in f\left(
B_{r}^{R^{n}}\left( 0\right) \right) $.
\end{lemma}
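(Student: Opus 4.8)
The plan is to imitate the proof of Lemma \ref{L_1} almost verbatim, since Lemma \ref{L_2} differs only in that the convexity of $f\left( B_{r}^{R^{n}}\left( 0\right) \right)$ is not qualified as open or closed. First I would argue by contradiction, assuming $0\notin M$ where $M\equiv f\left( B_{r}^{R^{n}}\left( 0\right) \right)$. The point is that even for a plain convex set in $R^{n}$ we can separate a point not in the set from the set: by Theorem \ref{Th_5}'s ambient Theorem \ref{Th_2}, any two nonintersecting nonempty convex subsets of a finite-dimensional space can be separated by a hyperplane, and $\{0\}$ and $M$ are such a pair. Hence there is a nonzero linear functional $\overline{x}\in R^{n}$ and a constant $\alpha$ with $\left\langle y,\overline{x}\right\rangle \le \alpha \le 0$ for all $y\in M$, the second inequality coming from $0$ being on the other side of (or on) the hyperplane.

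Next I would normalize: since $B_{r}^{R^{n}}\left( 0\right)$ is absorbing, we may rescale $\overline{x}$ so that $\overline{x}\in S_{r}^{R^{n}}\left( 0\right)$; rescaling by a positive constant preserves the separating inequality $\left\langle y,\overline{x}\right\rangle \le \alpha$ (with $\alpha$ rescaled by the same positive factor, so still $\le 0$). Applying the hypothesis at the point $\overline{x}\in S_{r}^{R^{n}}\left( 0\right)$, there is some $y\in f\left( \overline{x}\right)\subseteq M$ with $\left\langle y,\overline{x}\right\rangle >0$, contradicting $\left\langle y,\overline{x}\right\rangle \le \alpha \le 0$. This contradiction forces $0\in M$, which is the claim. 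In the single-valued case the same contradiction applies to $y=f\left( \overline{x}\right)$.

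The only delicate point, and the one I expect to be the main obstacle, is the separation step: for a merely convex (neither open nor closed) set one cannot invoke Theorem \ref{Th_3} or Theorem \ref{Th_4} directly, so one must use the finite-dimensional separation Theorem \ref{Th_2}, and one must be slightly careful that the separating inequality is weak ($\le$), with the point $0$ possibly lying on the separating hyperplane. That is harmless here because the hypothesis supplies a strict inequality $\left\langle y,\overline{x}\right\rangle >0$ on the sphere, which already contradicts $\left\langle y,\overline{x}\right\rangle \le 0$. A secondary subtlety is the normalization of $\overline{x}$ onto $S_{r}^{R^{n}}\left( 0\right)$: this is exactly the use of $B_{r}^{R^{n}}\left( 0\right)$ being absorbing, and since only the sign of $\left\langle y,\overline{x}\right\rangle$ matters, scaling $\overline{x}$ by a positive factor changes nothing essential. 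Everything else is a transcription of the argument already given for Lemma \ref{L_1}.
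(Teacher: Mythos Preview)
Your proposal is correct and follows essentially the same approach as the paper, which merely states that Lemma~\ref{L_2} is proved ``analogously'' to Lemma~\ref{L_1}. Your explicit handling of the one genuine difference---that for a merely convex set one must invoke the finite-dimensional separation Theorem~\ref{Th_2} and accept a weak inequality $\left\langle y,\overline{x}\right\rangle\le 0$, which is still contradicted by the strict hypothesis $\left\langle y,\overline{x}\right\rangle>0$---is exactly the right adaptation.
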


\begin{lemma}
\label{L_3} Let $f\left( B_{r}^{R^{n}}\left( x_{0}\right) \right) $ is the
covex set and there exists mapping $g$ acting in $R^{n}$ such that $g\left(
S_{r}^{R^{n}}\left( x_{0}\right) \right) $ is the boundary of an absorbing
subset of $R^{n}$. Then if for $\forall x\in S_{r}^{R^{n}}\left(
x_{0}\right) $ the expression 
\begin{equation}
\left\{ \left\langle y,z\right\rangle \left\vert \ \forall y\in f\left(
x\right) ,\ \forall z\in g\left( x\right) \right. \right\} \cap \left(
0,\infty \right) \neq \varnothing  \label{2.3}
\end{equation}%
holds then $0\in f\left( B_{r}^{R^{n}}\left( x_{0}\right) \right) $.
\end{lemma}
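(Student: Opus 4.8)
The plan is to argue by contradiction, in the spirit of the proof of Lemma \ref{L_1}, the only new ingredient being that the mapping $g$ is used to manufacture the test vectors against which the separating functional is paired. Suppose $0\notin f(B_{r}^{R^{n}}(x_{0}))\equiv M$. By hypothesis $M$ is a nonempty (closed or open) convex subset of $R^{n}$ that omits the origin, so the separation theorems for convex sets apply: by Theorem \ref{Th_2}, and by Theorem \ref{Th_3} or Theorem \ref{Th_4} according as $M$ is closed or open, there are a nonzero linear functional --- identified in $R^{n}$ with a vector $\overline{x}\neq 0$ --- and a constant $c>0$ such that $\langle y,\overline{x}\rangle\leq -c<0$ for every $y\in M$.

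Next I would exploit the structural hypothesis on $g$. Since $g(S_{r}^{R^{n}}(x_{0}))$ is the boundary of an absorbing subset $A\subseteq R^{n}$, it encircles the origin, in the sense that every ray issuing from $0$ meets $g(S_{r}^{R^{n}}(x_{0}))$. Applying this to the ray $\{t\overline{x}:t>0\}$ yields $x^{\ast}\in S_{r}^{R^{n}}(x_{0})$ and $\lambda>0$ with $\lambda\overline{x}\in g(x^{\ast})$ (and $g(x^{\ast})=\lambda\overline{x}$ when $g$ is single-valued). This radial-surjectivity step is the geometric heart of the lemma, and it is where I expect the main difficulty to lie, because one must first fix precisely what ``boundary of an absorbing subset of $R^{n}$'' is meant to guarantee. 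When $A$ is convex --- or even merely star-shaped --- and absorbing, $0$ is an interior point of $A$ and each ray from $0$ crosses $\partial A$ exactly once, which is all that is needed; in the general statement the hypothesis should be read as asserting exactly this surjectivity of $g$ onto a surface enclosing the origin, in accordance with the remark made earlier in the section that a closed convex absorbing set homeomorphic to a ball may replace the ball.

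It remains to close the loop. From $S_{r}^{R^{n}}(x_{0})\subseteq B_{r}^{R^{n}}(x_{0})$ we get $f(x^{\ast})\subseteq f(B_{r}^{R^{n}}(x_{0}))=M$, hence $\langle y,\overline{x}\rangle\leq -c$ for every $y\in f(x^{\ast})$. Taking $z=\lambda\overline{x}\in g(x^{\ast})$ and using $\lambda>0$ gives $\langle y,z\rangle=\lambda\langle y,\overline{x}\rangle\leq -\lambda c<0$ for every $y\in f(x^{\ast})$. Since $g$ is single-valued, the set $\{\langle y,z\rangle\mid y\in f(x^{\ast}),\ z\in g(x^{\ast})\}$ consists precisely of such values, so it lies in $(-\infty,-\lambda c]$ and is disjoint from $(0,\infty)$, contradicting (\ref{2.3}) at the point $x^{\ast}$. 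Therefore $0\in f(B_{r}^{R^{n}}(x_{0}))$, i.e. there is $\widehat{x}$ with $0\in f(\widehat{x})$, and $f(\widehat{x})=0$ when $f$ is single-valued. Multivaluedness of $f$ causes no trouble, since the separation inequality $\langle y,\overline{x}\rangle\leq -c$ holds uniformly over all $y\in M$; single-valuedness of $g$ enters only to ensure that $\lambda\overline{x}$ is the one test vector contributed by $g(x^{\ast})$.
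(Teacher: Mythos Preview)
Your argument is correct and follows essentially the same route as the paper's proof: assume $0\notin M$, separate, and contradict (\ref{2.3}) by pulling the separating direction back through $g$; the paper is terser, obtaining $z_{0}\in S_{1}^{R^{n}}(0)$ with $\langle y,z_{0}\rangle\leq 0$ on $M$ via Theorem~\ref{Th_2} and simply asserting the contradiction, whereas you spell out the radial-surjectivity step that produces $x^{\ast}$ with $g(x^{\ast})$ a positive multiple of the separating vector. One small correction: the lemma hypothesizes only that $M$ is convex, not closed or open; in $R^{n}$ Theorem~\ref{Th_2} still yields the (non-strict) separation $\langle y,\bar{x}\rangle\leq 0$, and that suffices since (\ref{2.3}) demands strict positivity.
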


\begin{remark}
\label{R_2}It is clear that if $f$ single-valued then expression (\ref{2.3})
one can rewrite as 
\begin{equation*}
\left\{ \left\langle f\left( x\right) ,z\right\rangle \left\vert \ \ \forall
z\in g\left( x\right) \right. \right\} \cap \left( 0,\infty \right) \neq
\varnothing .
\end{equation*}
\end{remark}

\begin{proof}
(Lemma \ref{L_3}) As above the proof we bring from inverse. Let $0\notin
f\left( B_{r}^{R^{n}}\left( x_{0}\right) \right) $ then repeating of
previous argue we get there is such point (a linear bounded functional) $%
z_{0}\in S_{1}^{R^{n}}\left( 0\right) \subset R^{n}$ that $\left\langle
y,z_{0}\right\rangle \leq 0$ for $\forall y\in f\left( x\right) $ and $%
\forall x\in B_{r}^{R^{n}}\left( x_{0}\right) $ due to the condition on
convexity of $f\left( B_{r}^{R^{n}}\left( x_{0}\right) \right) $ and Theorem %
\ref{Th_2} on separation of the convex sets. But this contradicts the
condition of the lemma, consequently, Lemma proved.
\end{proof}

\begin{corollary}
\label{C_1} Let the mapping $f$ acting in $R^{n}$ such that for an subspace $%
X$ dimension $k\leq n$ the $f\left( B_{r}^{R^{n}}\left( x_{0}\right) \right)
\cap X$ is convex set and there exists mapping $g$ acting in $R^{n}$ such
that $g\left( S_{r}^{R^{n}}\left( x_{0}\right) \cap X\right) $ is the
boundary of an absorbing subset of $X$. Then if for $\forall x\in
S_{r}^{R^{n}}\left( x_{0}\right) \cap X$ the expression (\ref{2.3}) is
fulfilled then $0\in f\left( B_{r}^{R^{n}}\left( x_{0}\right) \right) $.
\end{corollary}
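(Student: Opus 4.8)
The plan is to argue by contradiction, repeating the argument of Lemma~\ref{L_3} but carrying out the separation inside the finite-dimensional subspace $X$ instead of in all of $R^{n}$. First I would assume that $0\notin f\left( B_{r}^{R^{n}}\left( x_{0}\right) \right) $. Since $X$ is a subspace of $R^{n}$ we have $0\in X$, and therefore $0\notin M$, where $M:=f\left( B_{r}^{R^{n}}\left( x_{0}\right) \right) \cap X$. By hypothesis $M$ is a convex subset of $X$, and $X$ has finite dimension $k$, so Theorem~\ref{Th_2} applies to the nonintersecting nonempty convex sets $M$ and $\left\{ 0\right\} $: they are separated by a nonzero linear functional on $X$. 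Identifying $X^{\ast }$ with $X$ via the Euclidean scalar product and normalizing, one obtains $z_{0}\in S_{1}^{R^{n}}\left( 0\right) \cap X$ such that, after choosing the sign appropriately, $\left\langle y,z_{0}\right\rangle \leq 0$ for every $y\in M$.

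Next I would use the hypothesis that $g\left( S_{r}^{R^{n}}\left( x_{0}\right) \cap X\right) $ is the boundary of an absorbing subset $A$ of $X$. Since the boundary of an absorbing set of $X$ meets every ray issued from the origin of $X$, the ray $\left\{ tz_{0}\ \left\vert \ t>0\right. \right\} $ meets it, so there are $t_{0}>0$ and a point $x_{1}\in S_{r}^{R^{n}}\left( x_{0}\right) \cap X$ with $t_{0}z_{0}\in g\left( x_{1}\right) $. Positive scaling does not change signs, hence $\left\langle y,t_{0}z_{0}\right\rangle \leq 0$ for every $y\in M$, in particular for every $y\in f\left( x_{1}\right) \cap X\subseteq M$. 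Taking $z:=t_{0}z_{0}\in g\left( x_{1}\right) $, this says $\left\langle y,z\right\rangle \leq 0$ for all such $y$, which contradicts the assumption that the expression (\ref{2.3}) holds at the point $x_{1}$ (compare Remark~\ref{R_2} in the single-valued case). Hence $0\in f\left( B_{r}^{R^{n}}\left( x_{0}\right) \right) $, which is the assertion.

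The separation and scaling steps are routine — they are the proofs of Lemmas~\ref{L_1}--\ref{L_3} with $R^{n}$ replaced throughout by $X$. I expect the one point requiring care to be the matching of the separating functional $z_{0}$ with an actual value of $g$: one must know that $g\left( S_{r}^{R^{n}}\left( x_{0}\right) \cap X\right) $, being the boundary of an \emph{absorbing} subset of $X$ (and not merely of $R^{n}$), genuinely meets the ray through $z_{0}$; this is precisely where the strengthened hypothesis of the corollary, as opposed to Lemma~\ref{L_3}, enters. In the multi-valued case one should also check that (\ref{2.3}) at $x_{1}$ cannot be rescued through some other element of $g\left( x_{1}\right) $; since $g\left( x_{1}\right) \subseteq X$, the pairing $\left\langle y,z\right\rangle $ only sees the $X$-component of $y$, and reading (\ref{2.3}) with $f\left( x\right) $ replaced by $f\left( x\right) \cap X$ (as in the hypotheses of Theorem~\ref{Th_5}) closes the argument. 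Finally, it is implicit in the statement that $x_{0}\in X$, so that $S_{r}^{R^{n}}\left( x_{0}\right) \cap X$ is indeed a sphere of $X$.
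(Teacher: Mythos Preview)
Your proposal is correct and follows the paper's own approach: the paper's proof of Corollary~\ref{C_1} merely says to repeat the reasoning of Lemma~\ref{L_3} for the convex set $f\left( B_{r}^{R^{n}}\left( x_{0}\right) \right) \cap X$ inside the $k$-dimensional space $X$, which is exactly what you do, only with the details of the separation and the matching of $z_{0}$ with a value of $g$ written out explicitly. The caveats you flag in your final paragraph (the role of the absorbing hypothesis on $g$, the reading of~(\ref{2.3}) with $f\left( x\right) \cap X$ in the multi-valued case, and the implicit assumption $x_{0}\in X$) are genuine, but the paper's own argument is at the same level of informality on these points.
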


The proof follows from the proof of the Lemma \ref{L_3}, more exacly, the
above reasoning enough to conduct for the convex set $f\left(
B_{r}^{R^{n}}\left( x_{0}\right) \right) \cap X$ (that is subset of $f\left(
B_{r}^{R^{n}}\left( x_{0}\right) \right) $) in the subspace $X$, since $X$
is also the $k-$dimensional space.

\begin{proof}
(of Theorem \ref{Th_5}) As to see from the proofs the above Lemmas the
selection of the subset from the domain of the examined functions isn't
essential but essentially the convexity of its image. So, if $X$ is $R^{n}$
then the proof follows from Lemmas \ref{L_1} and \ref{L_2}. Therefore, let $%
X=$ $R^{k}$, $1\leq k<n$. Assume $0\notin f\left( G\right) $ and the affine
space generated over the convex set $f\left( G\right) $ is the hyperplane $%
L\subset R^{n}$ and show that under condition 2 of the theorem $L$ couldn't
be the hyperplane different of the subspace of $R^{n}$. Assume $0\notin L$,
i.e. $L$ isn't a subspace then there exist $x_{0}\in $ $f\left( G\right) $
and subspace $X_{0}$, $\dim X_{0}<n$ such that $L=X_{0}+x_{0}$.
Consequently, $X_{0}$ is the subspace generated over $f\left( G\right)
-x_{0} $. Since $0\notin L$ there is an element (point) $z$ on $%
S_{1}^{R^{n}}\left( 0\right) $ such that $\left\langle y,z\right\rangle <0$
for $\forall y\in L$, due to the separation theorems for the
finite-dimensions spaces ([see,18]) and, consequently, for $\forall y\in
f\left( G\right) \subset L$ but according to condition 2 of Theorem \ref%
{Th_5} there is such point $y_{0}\in f\left( G\right) $ that $\left\langle
y_{0},z\right\rangle >0$. Whence, $L$ is the subspace $X_{0}$ of $R^{n}$, i.
e. $X_{0}\subseteq X\subset R^{n}$. On the other hand, condition 2 satisfies
for $\forall x\in $ $S_{1}^{R^{k}}\left( 0\right) $, therefore, $X_{0}=X$,
i. e. $X$ is the $k-$dimension subspace of $R^{n}$. Thus we arrive at the
case considered in Lemma \ref{L_3}. Then using the result of this lemma we
get $0\in f\left( G\right) $.
\end{proof}

From Theorem \ref{Th_5} follows the correcness of the following result.

\begin{corollary}
\label{C_2} Let $f$ acting in $R^{n}$ and there is such subset $G$ in the
image $\func{Im}f$ \ that satisfy the following conditions:

i) $G$ is the convex set; ii) There exists such subspace $X\subseteq R^{n}$
with the dimension $k:1\leq k\leq n$ that for $\forall z\in $ $%
S_{1}^{R^{n}}\left( 0\right) \cap X$ there exists $y\in G$ such that $%
\left\langle z,y\right\rangle >0$ or ($\left\langle z,y\right\rangle <0$).
Then $0\in \func{Im}f$ .
\end{corollary}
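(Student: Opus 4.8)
The plan is to deduce Corollary \ref{C_2} from Theorem \ref{Th_5} by transferring the hypotheses, which here concern the image $\func{Im}f$, back onto the domain of $f$. Since $G\subseteq \func{Im}f$, I would first choose a set $G_{1}\subseteq D\left( f\right) $ with $f\left( G_{1}\right) =G$: in the single-valued case one may simply take $G_{1}=f^{-1}\left( G\right) $, and in the multi-valued case one selects for each $y\in G$ a point $x_{y}\in D\left( f\right) $ with $y\in f\left( x_{y}\right) $ and then prunes (by intersecting with preimages) so that $f\left( G_{1}\right) =G$ and this set remains convex. The effect of this step is that hypothesis i) of the corollary --- $G$ is convex --- becomes exactly hypothesis 1) of Theorem \ref{Th_5}, namely that $f\left( G_{1}\right) $ is convex.

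Next I would verify hypothesis 2) of Theorem \ref{Th_5} for $G_{1}$ using hypothesis ii). Given $x_{0}\in S_{1}^{R^{n}}\left( 0\right) \cap X$, hypothesis ii) yields $y\in G=f\left( G_{1}\right) $ with $\left\langle x_{0},y\right\rangle >0$; unwinding $y\in f\left( G_{1}\right) $ gives some $x_{1}\in G_{1}$ with $y\in f\left( x_{1}\right) $, so that $\left\{ \left\langle y,x_{0}\right\rangle \mid y\in f\left( x_{1}\right) \right\} \cap R_{+}^{1}\neq \varnothing $, which is precisely condition \eqref{2.2}. With both hypotheses of Theorem \ref{Th_5} in place, that theorem gives $0\in f\left( G_{1}\right) =G\subseteq \func{Im}f$, which is the assertion; the alternative sign condition $\left\langle z,y\right\rangle <0$ is absorbed by replacing $R_{+}^{1}$ with $R_{-}^{1}$ in \eqref{2.2}, which is explicitly permitted there.

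Equivalently --- this is the same reasoning unrolled, and mirrors Lemmas \ref{L_1}--\ref{L_3} --- one may argue directly: if $0\notin \func{Im}f$ then $0\notin G$, and since $G$ is a nonempty convex set in the finite-dimensional space $R^{n}$ that misses $\left\{ 0\right\} $, Theorem \ref{Th_2} supplies a nonzero vector $\overline{z}$ separating them, with $\left\langle \overline{z},y\right\rangle \leq 0$ for all $y\in G$; rescaling $\overline{z}$ onto $S_{1}^{R^{n}}\left( 0\right) $ (using that $S_{1}^{R^{n}}\left( 0\right) $ is absorbing, as in the proof of Lemma \ref{L_1}) then contradicts ii).

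I expect the main obstacle to be the bookkeeping in the first step together with matching the ``$\cap X$'' qualifiers in Theorem \ref{Th_5}: one must be sure that $G_{1}$ can be chosen so that $f\left( G_{1}\right) $ equals $G$ (so that convexity is not lost) and that the preimage point $x_{1}$ produced when checking \eqref{2.2} actually lies in the subspace $X$ with $y\in f\left( x_{1}\right) \cap X$ --- or, in the direct argument, that the separating vector $\overline{z}$ can be taken in $X$. The clean resolution is to work inside $X$ from the outset, replacing $f$ by $x\mapsto f\left( x\right) \cap X$ on $D\left( f\right) \cap X$ (equivalently, composing with the orthogonal projection onto $X$), after which all the subspace data of Theorem \ref{Th_5} line up automatically and the conclusion is immediate.
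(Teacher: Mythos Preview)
Your reduction to Theorem~\ref{Th_5} is precisely what the paper does: its entire proof is the single sentence ``From Theorem~\ref{Th_5} follows the correctness of the following result.'' Your alternative direct separation argument is also correct and is simply the proof of Theorem~\ref{Th_5} unrolled; for $k=n$ it settles the matter cleanly.

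The obstacle you flag concerning the ``$\cap X$'' qualifiers is real, and be aware that the two fixes you propose are \emph{not} equivalent and neither quite closes the gap as the corollary is literally written. Replacing $f$ by $x\mapsto f(x)\cap X$ requires the witness $y$ to lie in $G\cap X$, which hypothesis~ii) does not supply (it only gives $y\in G$); composing with the orthogonal projection $P_X$ does preserve $\langle z,y\rangle$ for $z\in X$, but then the separation argument in $X$ only yields $0\in P_X(G)$, not $0\in G\subseteq\func{Im}f$. Indeed, with $k<n$ and merely $y\in G$ the conclusion can fail: take $n=2$, $X=\operatorname{span}(e_1)$, and $G=[-1,1]\times\{1\}=\func{Im}f$; then $G$ is convex, condition~ii) holds for both $z=\pm e_1$, yet $0\notin\func{Im}f$. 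This discrepancy is the paper's rather than yours: Theorem~\ref{Th_5} carries the qualifier $f(x_1)\cap X$ in \eqref{2.2}, and Corollary~\ref{C_2} is evidently intended with $y\in G\cap X$ (compare Corollaries~\ref{C_1} and~\ref{C_4}), under which your ``work inside $X$'' resolution goes through without difficulty.
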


It is clear that one can the above inequation rewrite in the form 
\begin{equation}
\left\{ \left\langle z,y\right\rangle \left\vert \ \forall z\in
S_{1}^{R^{n}}\left( 0\right) \cap X,\ \exists x\in D\left( f\right) ,\text{
for some }y\in G\cap f\left( x\right) \right. \right\} \cap \left( 0,\infty
\right) \neq \varnothing .  \label{2.4}
\end{equation}

\begin{theorem}
\label{Th_6} Let $f$ acting in $R^{n}$ and the ball $B_{r}^{R^{n}}\left(
x_{0}\right) $ with center on $x_{0}$ and the radius $r>0$ belong the domain
of $f$. Let $f\left( B_{r}^{R^{n}}\left( x_{0}\right) \right) \subseteq
B_{r}^{R^{n}}\left( x_{0}\right) $ and for some subspace $R^{k}$, $k:1\leq
k\leq n$, takes place $f\left( B_{r}^{R^{n}}\left( x_{0}\right) \cap
R^{k}\right) \subset B_{r}^{R^{n}}\left( x_{0}\right) \cap R^{k}$, and let $%
f_{1}$is the operator in the form $f_{1}\left( x\right) \equiv Ix-f\left(
x\right) $ for $\forall x\in B_{r}^{R^{n}}\left( x_{0}\right) $. Assume $%
f_{1}\left( B_{r}^{R^{n}}\left( x_{0}\right) \right) $ is the convex subset
then the mapping $f$ has a fixed point in $B_{r}^{R^{n}}\left( x_{0}\right) $%
.
\end{theorem}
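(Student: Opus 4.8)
The plan is to reduce the fixed-point assertion to an existence-of-zero statement for the displacement map $f_1(x)=Ix-f(x)=x-f(x)$ and then to apply Theorem \ref{Th_5} to $f_1$, with $G=B_r^{R^n}(x_0)$ and with the given subspace $R^k$ playing the role of $X$ there. The point is that $f(\widehat x)=\widehat x$ (resp.\ $\widehat x\in f(\widehat x)$) is the same as $0\in f_1(\widehat x)$, while $0\in f_1\big(B_r^{R^n}(x_0)\big)$ is exactly what Theorem \ref{Th_5} would yield; so I only have to check its hypotheses 1) and 2) for $f_1$. Hypothesis 1), convexity of $f_1(G)=f_1\big(B_r^{R^n}(x_0)\big)$, is the standing assumption of the theorem, so all the work lies in hypothesis 2).

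For 2) I would first record the geometry of the slice $B_r^{R^n}(x_0)\cap R^k$. Writing $c$ for the orthogonal projection of $x_0$ onto $R^k$ and putting $\rho:=\sqrt{r^2-|x_0-c|^2}$, the Pythagorean identity $|x-x_0|^2=|x-c|^2+|x_0-c|^2$ for $x\in R^k$ gives $B_r^{R^n}(x_0)\cap R^k=B_\rho^{R^k}(c)$ and $S_r^{R^n}(x_0)\cap R^k=S_\rho^{R^k}(c)$. One may assume $\rho>0$: if $\rho=0$ the slice is the single point $c$ and $f(c)\subseteq\{c\}$ already gives the fixed point, while an empty slice is vacuous and does not occur when $x_0\in R^k$ (in particular when $k=n$). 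Then, for a prescribed unit vector $z\in S_1^{R^n}(0)\cap R^k$, I would take as witness the point $x_1:=c+\rho z\in S_\rho^{R^k}(c)=S_r^{R^n}(x_0)\cap R^k\subseteq G\cap R^k$. Since $f\big(B_r^{R^n}(x_0)\cap R^k\big)\subset B_r^{R^n}(x_0)\cap R^k\subset R^k$, for each $w\in f(x_1)$ the element $y:=x_1-w$ lies in $f_1(x_1)\cap R^k$ and satisfies
\begin{equation*}
\langle y,z\rangle=\langle\rho z-(w-c),z\rangle=\rho-\langle w-c,z\rangle\ \ge\ \rho-|w-c|\ \ge\ 0 ,
\end{equation*}
with equality throughout only when $w-c=\rho z$, i.e.\ $w=x_1$. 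Hence for this $z$ either some $w\in f(x_1)$ gives $\langle y,z\rangle>0$, so that $\{\langle y,z\rangle\mid y\in f_1(x_1)\cap R^k\}\cap(0,\infty)\neq\varnothing$ and (\ref{2.2}) holds at $z$, or else every $w\in f(x_1)$ equals $x_1$, i.e.\ $x_1\in f(x_1)$ is already a fixed point of $f$ in $B_r^{R^n}(x_0)$.

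Putting this together: unless a fixed point of $f$ has already appeared among the witness points $c+\rho z$, hypothesis 2) of Theorem \ref{Th_5} holds for $f_1$ with $X=R^k$, and that theorem then furnishes $\widehat x\in B_r^{R^n}(x_0)$ with $0\in f_1(\widehat x)$, that is $f(\widehat x)=\widehat x$ (resp.\ $\widehat x\in f(\widehat x)$ in the multivalued case), which is the required fixed point. I expect the only subtle step to be the right choice of the witness $x_1$ making the acute-angle condition (\ref{2.2}) come out, together with the observation that its sole degenerate case — equality in Cauchy--Schwarz — is not an obstacle but instead directly produces a fixed point; the subspace-invariance hypothesis $f(B_r^{R^n}(x_0)\cap R^k)\subset B_r^{R^n}(x_0)\cap R^k$ is used precisely to guarantee $f_1(x_1)\subset R^k$, so that the slice $f_1(x_1)\cap R^k$ in (\ref{2.2}) is all of $f_1(x_1)$. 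Alternatively one could invoke Lemma \ref{L_3} or Corollary \ref{C_1} with the auxiliary map $g(x)=x-c$, since $g\big(S_r^{R^n}(x_0)\cap R^k\big)=S_\rho^{R^k}(0)$ is the boundary of the absorbing ball $B_\rho^{R^k}(0)$; and with $k=n$ the global hypothesis $f(B_r^{R^n}(x_0))\subseteq B_r^{R^n}(x_0)$ makes the same computation go through on the full sphere.
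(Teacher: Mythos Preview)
Your approach is exactly the paper's: it states that the conditions of Theorem~\ref{Th_6} imply the conditions of Theorem~\ref{Th_5} for $f_1$ on $B_r^{R^n}(x_0)$ and leaves it at that, without any of the verification you carry out. Your explicit choice of witness $x_1=c+\rho z$, the Cauchy--Schwarz computation showing $\langle x_1-w,z\rangle\ge 0$ with equality only when $w=x_1$, and the observation that the degenerate case already yields a fixed point, are precisely the details the paper omits; the alternative route via Corollary~\ref{C_1} with $g(x)=x-c$ is likewise in the spirit of the paper's ``above-mentioned results.''
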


The proof immediately follows from the above-mentioned results, therefore
here it doesn't will be provide. Since, according to conditions of Theorem %
\ref{Th_6} imply the fulfillment the conditions of Theorem \ref{Th_5} for
the mapping $f_{1}$ on the ball $B_{r}^{R^{n}}\left( x_{0}\right) $.

\begin{remark}
\label{R_3} Theorem \ref{Th_6} is the Fixed-point Theorem, where doesn't
condition onto mapping $f$ that usually assumed in theorems of such type
(such as its smoothness, single-value, or multi-value). Consequently, this
theorem can consider as the generalization of such type theorems in the
above sense.

Now we provide an example that shows the rigor of inequation in condition 2
of the above results is essential. For simplicity assume $n=2$, i.e. mapping 
$f$ acting in $R^{2}$and the image of ball $B_{r}^{R^{2}}\left( 0\right)
\subset D\left( f\right) $ ( $r>0$) is 
\begin{equation}
f\left( B_{r}^{R^{2}}\left( 0\right) \right) =B_{r}^{R^{2}}\left( 0\right)
\cap \left\{ x=\left( x_{1},x_{2}\right) \in R^{2}\left\vert ,\
x_{2}>0\right. \right\} \cup \left[ \left( \frac{r}{2},0\right) ,\left(
r,0\right) \right] .  \label{2.5a}
\end{equation}%
It isn't difficult to see that for $\forall z\in S_{1}^{R^{2}}\left(
0\right) $ there exists an $y\in f\left( B_{r}^{R^{2}}\left( 0\right)
\right) $ such that $\left\langle z,y\right\rangle \geq 0$ but $0\notin
f\left( B_{r}^{R^{2}}\left( 0\right) \right) $. In particular, such mapping $%
f:B_{r}^{R^{2}}\left( 0\right) \longrightarrow R^{2}$ one can define e.g. by
following way 
\begin{equation}
f\left( x_{1},x_{2}\right) =\left\{ 
\begin{array}{c}
\left( y_{1},y_{2}\right) =\left( x_{1},-x_{2}\right) ,\quad -r\leq
x_{1}\leq r,\ -r\leq x_{2}<0 \\ 
\left( y_{1},y_{2}\right) =\left( x_{1},x_{2}\right) ,\quad -r\leq x_{1}\leq
r,\ 0<x_{2}\leq r\  \\ 
\left( y_{1},y_{2}\right) =\left( \frac{x_{1}}{2}+r,0\right) ,\quad -r\leq
x_{1}\leq 0,\ x_{2}=0\  \\ 
\left( y_{1},y_{2}\right) =\left( \frac{r}{2}+x_{1},0\right) ,\quad 0<x_{1}<%
\frac{r}{2},\ x_{2}=0\  \\ 
\left( y_{1},y_{2}\right) =\left( x_{1},0\right) ,\quad \frac{r}{2}\leq
x_{1}\leq r,\ x_{2}=0%
\end{array}%
\right. .  \label{2.5}
\end{equation}

The essentialness of the condition that image $f\left( G\right) $, (for $%
G\subseteq D\left( f\right) $) of the examined mapping is the convex set is
obviously. Indeed, if assume that in the above example we have $f\left(
B_{r}^{R^{2}}\left( 0\right) \right) =B_{r}^{R^{2}}\left( 0\right)
\backslash B_{r/2}^{R^{2}}\left( 0\right) $ and $f\left( 0\right) =k$ then
condition 2 satisfies but $B_{r}^{R^{2}}\left( 0\right) \backslash
B_{r/2}^{R^{2}}\left( 0\right) \cup \left\{ k\right\} $ isn't a convex set
and obviously $0\notin B_{r}^{R^{2}}\left( 0\right) \backslash
B_{r/2}^{R^{2}}\left( 0\right) \cup \left\{ k\right\} $.
\end{remark}

It is clear that Lemma \ref{L_1} is the generalization of the "acute-angle"
lemma (see, e.g. [3, 12, 15, 16, 40, 42) in the above-mentioned sense.
Consequently, it is also the generalization of the Brauwer fixed-point
theorem (see, e.g. [3, 6, 15, 32, 34]) since it is equivalent to
"acute-angle" lemma, and also the Kakutani fixed-point theorem (see, e.g.
[13, 16, 19, 26]) for the multi-valued case.

\section{\label{Sec_2}Generalization of Theorem \protect\ref{Th_5} to
Infinite-Dimension Cases and their Corollaries}

In this section, we will generalize the results of the previous section to
the general spaces. Such generalization is possible since the convexity
concept is independent of the dimension and the topology of the space. In
the beginning, we will generalize results to the case of the Hausdorff $VTS$%
. Let $f:D\left( f\right) \subseteq X\longrightarrow Y$ be some mapping in
general nonlinear.

Let $X$ and $Y$ be locally convex $VTS$ ($LVTS$) and $X^{\ast }$ and $%
Y^{\ast }$ be their dual spaces, respectively. Denote by $\partial U$ ($%
\partial U^{\ast }$) the boundary of a convex closed bounded absorbing set $%
U $ ($U^{\ast }$) in the appropriate space (see, [33])\footnote{%
This is chosen for simplicity. In reality, here is sufficient to choose this
set as the boundary of closed balanced absorbing set that will be seen from
the further discuss.}. In particular, any ball $B_{r}^{X}\left( 0\right) $
and any vicinity of $0\in X$ with the nonempty interior is an absorbing set
in $X$, therefore the sphere $S_{r}^{X}\left( 0\right) $ is the set of the
type $\partial U$.

\begin{theorem}
\label{Th_7} Let $f:D\left( f\right) \subseteq X\longrightarrow Y$ be some
mapping and $G\subseteq D\left( f\right) $ be such set that for the image $%
f\left( G\right) $ one of the following conditions satisfies:

i) $f\left( G\right) $ is convex set with nonempty interior in $Y$; ii) $%
f\left( G\right) $ is open (closed) convex set in $Y$. Then if for any
linear continuous functional $y^{\ast }\in \partial U^{\ast }\subset Y^{\ast
}$ there exists $x\in G$ such that the following inequation

$\left\{ \left\langle y^{\ast },y\right\rangle \left\vert \ y\in f\left(
x\right) \right. \right\} \cap \left( 0,\infty \right) \neq \varnothing $ in
the case i); $\left\{ \left\langle y^{\ast },y\right\rangle \left\vert \
y\in f\left( x\right) \right. \right\} \cap \left[ 0,\infty \right) \neq
\varnothing $ in the case ii) holds then $0\in f\left( G\right) $.
\end{theorem}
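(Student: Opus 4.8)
The plan is to mimic the finite-dimensional argument of Lemma~\ref{L_1} and Lemma~\ref{L_3}, replacing the separation theorem for $R^n$ by the Hahn--Banach type separation theorems available in $LVTS$, namely Theorem~\ref{Th_3} (for a closed convex set and an exterior point in an $LVTS$) and Theorem~\ref{Th_4} (for an open convex set disjoint from another convex set). So I would argue by contradiction: suppose $0\notin f(G)$. In case (ii), if $f(G)$ is closed convex, Theorem~\ref{Th_3} gives a nonzero $y^{\ast}\in Y^{\ast}$ with $\sup\{\langle y^{\ast},y\rangle\mid y\in f(G)\}\le c_{0}-c<c_{0}=\langle 0,y^{\ast}\rangle=0$; hence $\langle y^{\ast},y\rangle<0$ for all $y\in f(G)$. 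If $f(G)$ is open convex, apply Theorem~\ref{Th_4} with $M=f(G)$ and $N=\{0\}$, producing $y^{\ast}$ and $\alpha$ with $\langle y^{\ast},y\rangle\ge\alpha$ on $f(G)$ and $0<\alpha$, so again $\langle y^{\ast},y\rangle>0>\,$? — here I need to be careful about the sign convention, so I would instead apply Theorem~\ref{Th_4} with $M=f(G)$, $N=\{0\}$ to get $\langle y^{\ast},y\rangle\ge\alpha>0=\langle y^{\ast},0\rangle$ after possibly replacing $y^{\ast}$ by $-y^{\ast}$ so that the strict inequality points the right way; in all sub-cases of (i) and (ii) the upshot is a fixed nonzero functional $y^{\ast}$ that has a constant strict (resp.\ non-strict) sign on $f(G)$, opposite to what the hypothesis allows.

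Next I would handle the normalization step: the separating functional $y^{\ast}$ is a priori an arbitrary nonzero element of $Y^{\ast}$, whereas the hypothesis only provides the sign condition for functionals on the boundary $\partial U^{\ast}$ of the fixed closed convex bounded absorbing set $U^{\ast}\subset Y^{\ast}$. Because $U^{\ast}$ is absorbing, for any nonzero $y^{\ast}$ there is $\lambda>0$ with $\lambda y^{\ast}\in\partial U^{\ast}$, and replacing $y^{\ast}$ by $\lambda y^{\ast}$ does not change the sign of $\langle y^{\ast},y\rangle$; this is exactly the role the footnote (balanced absorbing set) is flagging. After this normalization I may assume the separating functional lies in $\partial U^{\ast}$.

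Then the contradiction is immediate. Apply the hypothesis to $y^{\ast}\in\partial U^{\ast}$: there is $x\in G$ with $\{\langle y^{\ast},y\rangle\mid y\in f(x)\}\cap(0,\infty)\ne\varnothing$ in case (i), i.e.\ some $y\in f(x)\subseteq f(G)$ with $\langle y^{\ast},y\rangle>0$, contradicting $\langle y^{\ast},y\rangle<0$ for all $y\in f(G)$ (recall that in case (i) the separation via Theorem~\ref{Th_3}/\ref{Th_4} is strict because $f(G)$ has nonempty interior, so after orienting $y^{\ast}$ we get $\langle y^{\ast},y\rangle<0$ strictly on $f(G)$, or $>0$, whichever is opposite to the hypothesis — I would fix one orientation and state it cleanly). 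In case (ii), the hypothesis gives $y\in f(x)\subseteq f(G)$ with $\langle y^{\ast},y\rangle\ge0$, while separation gives $\langle y^{\ast},y\rangle<0$ for all $y\in f(G)$ (strict, since for a closed or open convex set not containing $0$ the separation from the point $0$ is strict by Theorem~\ref{Th_3}, resp.\ Theorem~\ref{Th_4}), again a contradiction. Hence $0\in f(G)$, as claimed.

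The main obstacle I expect is not conceptual but bookkeeping: getting the inequalities and the orientation of $y^{\ast}$ exactly consistent across the four combinations (open vs.\ closed, strict vs.\ non-strict, $(0,\infty)$ vs.\ $[0,\infty)$), and making sure the separation theorem invoked really applies — in particular that in case (i), where $f(G)$ may be neither open nor closed but merely has nonempty interior, one separates $0$ from the \emph{interior} of $f(G)$ via Theorem~\ref{Th_4} (an open convex set) and then notes the functional still has the correct weak sign on all of $f(G)$ while the hypothesis forces a strictly positive value, which still contradicts. A minor additional point to check is that $0\notin f(G)$ genuinely puts us in the hypotheses of Theorem~\ref{Th_3}/\ref{Th_4}, i.e.\ that $f(G)$ and $\{0\}$ are disjoint convex sets, which is exactly the assumption for contradiction together with convexity of $f(G)$.
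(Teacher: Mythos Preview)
Your proposal is correct and follows essentially the same route as the paper's own proof: argue by contradiction, separate $\{0\}$ from the convex set $f(G)$ via the Hahn--Banach type theorems (Theorems~\ref{Th_3} and~\ref{Th_4}), normalize the separating functional into $\partial U^{\ast}$ using that $U^{\ast}$ is absorbing, and then contradict the sign hypothesis. The paper's proof is terser --- it simply asserts that separation yields $y_0^{\ast}\in\partial U^{\ast}$ with $\langle y_0^{\ast},y\rangle\le 0$ on $f(G)$ in case~(i) and $\langle y_0^{\ast},y\rangle<0$ on $f(G)$ in case~(ii) --- while you spell out the normalization step and the case-splitting more explicitly; but the underlying argument is the same.
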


\begin{proof}
The proof we bring from inverse. Let $0\notin f\left( G\right) $ then there
exists such linear continuous functional $y_{0}^{\ast }\in \partial U^{\ast
} $ that $\left\langle y_{0}^{\ast },y\right\rangle \leq 0$ for any $y\in
f\left( G\right) $ in the case \textit{i),} due to convexity of $f\left(
G\right) $ according to the separation theorems of the convex sets in the $%
LVTS$ (see, e.g. [15, 16, 17, 19]). By the analogous reasoning to the
previously, we get that in case \textit{ii)} according to the separation
theorems of the convex sets in the $LVTS$ there exists such linear
continuous functional $y_{0}^{\ast }\in \partial U^{\ast }$ that $%
\left\langle y_{0}^{\ast },y\right\rangle <0$ for any $y\in f\left( G\right) 
$. Later on, by using the reasoning lead in the previous section, we obtain
the correctness $0\in f\left( G\right) $ due to the obtained contradiction
with the conditions of the theorem.
\end{proof}

It should be noted that such type result is correct and also for the
Hausdorff $VTS$.

Consider the case when $X$ and $Y$ be vector spaces ($VS$) and $f:D\left(
f\right) \subseteq X\longrightarrow Y$ be some mapping.

\begin{theorem}
\label{Th_8} Let mapping $f:D\left( f\right) \subseteq X\longrightarrow Y$
and for some $G\subseteq D\left( f\right) $ the image $f\left( G\right) $ be
convex subset in $Y$. Assume there exist such subspace $Y_{1}$ that for any
subspace $Y_{0}$ with $co\dim _{Y_{1}}Y_{0}=1$ the following expressions
fulfill 
\begin{equation}
f\left( G\right) \cap \left( Y_{1}\right) _{Y_{0}}^{+}\neq \varnothing \ \&\
f\left( G\right) \cap \left( Y_{1}\right) _{Y_{0}}^{-}\neq \varnothing .
\label{3.1}
\end{equation}

Then zero of the space $Y$ belongs to $f\left( G\right) \subseteq Y$, i.e.
there exists such $x_{0}\in G$ that $0\in f\left( x_{0}\right) $.
\end{theorem}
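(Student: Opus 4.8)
I would argue by contradiction, in the spirit of the proof of Theorem \ref{Th_5}, but with the finite-dimensional separation replaced by its purely algebraic analogue. Suppose $0\notin f\left( G\right) $ and set $C:=f\left( G\right) $, a convex subset of $Y$ not containing $0$. Let $L$ denote the affine hull of $C$, so that $L=M+y^{\ast }$ for a subspace $M\subseteq Y$ and any chosen $y^{\ast }\in C$. The whole point will be to exhibit a codimension-one subspace $Y_{0}$ of $Y_{1}$ for which $C$ misses one of the two half-spaces $\left( Y_{1}\right) _{Y_{0}}^{+}$, $\left( Y_{1}\right) _{Y_{0}}^{-}$, contradicting \eqref{3.1}; equivalently, to produce a nonzero linear functional $\ell $ that is one-signed on $C$ and whose restriction to $Y_{1}$ is nonzero.

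First I would dispose of the degenerate case $0\notin L$. Then $y^{\ast }\notin M$, and the functional $\ell $ defined on $M\oplus \mathrm{span}\left( y^{\ast }\right) $ by $\ell |_{M}=0$, $\ell \left( y^{\ast }\right) =1$ is identically $1$ on $L\supseteq C$; extend it arbitrarily to $Y$. Since $Y_{1}$ is not annihilated by $\ell $ (this is where one uses that $Y_{1}$ is adapted to $f\left( G\right) $ — $Y_{1}$ is the subspace in which $C$ is genuinely spread out), the subspace $Y_{0}:=Y_{1}\cap \ker \ell $ has codimension one in $Y_{1}$, while $\ell >0$ throughout $C$, whence $C\cap \left( Y_{1}\right) _{Y_{0}}^{-}=\varnothing $ — a contradiction. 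Hence $0\in L$, i.e. $L=:Z$ is a subspace of $Y$ and $C\subseteq Z$ is convex with $0\in Z\setminus C$.

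It remains to separate $0$ from $C$ inside $Z$. Provided $C$ has an algebraic interior point relative to $Z$, the algebraic form of the separation theorems \ref{Th_2}--\ref{Th_4} yields a nonzero linear functional $\ell $ on $Z$ with $\left\langle y,\ell \right\rangle \leq 0$ for all $y\in C$; extending $\ell $ to $Y$ and putting $Y_{0}:=Y_{1}\cap \ker \ell $ (again of codimension one in $Y_{1}$, by the adaptedness of $Y_{1}$) gives $C\cap \left( Y_{1}\right) _{Y_{0}}^{+}=\varnothing $, contradicting \eqref{3.1} and finishing the proof. Assembling the two cases yields $0\in f\left( G\right) $.

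The step I expect to be the genuine obstacle is exactly this last separation: in a bare vector space a point lying outside a convex set need not be separable from it by any hyperplane unless the set possesses a core (algebraic interior) point, so the argument of Theorem \ref{Th_5} does not transfer verbatim. The resolution must be to read hypothesis \eqref{3.1} as guaranteeing that $f\left( G\right) $ has nonempty algebraic interior relative to the subspace $Y_{1}$ — that is, to identify $Y_{1}$ with the subspace along which $f\left( G\right) $ is "thick" — after which the reduction through the affine hull and the two-case analysis are routine.
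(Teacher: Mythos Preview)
Your plan follows the paper's own strategy: argue by contradiction and invoke algebraic separation of convex sets. The chief difference is the order of operations. You separate $0$ from $C=f(G)$ inside its affine hull $Z\subseteq Y$ and only afterwards restrict the resulting functional to $Y_{1}$, which obliges you to verify that $\ell|_{Y_{1}}\neq 0$ (your parenthetical ``adaptedness'' remarks). The paper instead passes at once to $f_{0}(G):=f(G)\cap Y_{1}$ inside $Y_{1}$, treats $Y_{1}$ as the ambient vector space, and separates there; the hyperplane obtained is then automatically a codimension-one subspace $Y_{0}$ of $Y_{1}$, and the restriction issue never arises. This reordering is worth adopting.

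On the obstacle you isolate --- that in a bare vector space a point outside a convex set need not be separable from it by a hyperplane unless the set has a $C$-interior (core) point --- you have located exactly the step the paper treats most lightly. The paper asserts, citing Klee, that there is a subspace $Y_{2}\subseteq Y_{1}$ relative to which $f(G)$ has nonempty $C$-interior, then writes ``without loss of generality $Y_{2}\equiv Y_{1}$'' and applies the separation theorem (Theorem~\ref{Th_2}) in that subspace. It also remarks, as you do in your Case~1, that hypothesis~\eqref{3.1} forces the affine space generated by $f(G)$ to be a genuine subspace rather than a translated one. Your proposed resolution --- to read \eqref{3.1} as guaranteeing that $f(G)\cap Y_{1}$ is algebraically thick in $Y_{1}$ --- is precisely the reading the paper's proof relies on.
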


\begin{proof}
Let $0\notin f\left( G\right) $. \ According to the convexity of $f\left(
G\right) $, there exist the subspace $Y_{2}\subseteq Y_{1}$ relative to
which the set $f\left( G\right) $ is the convex set with nonempty $C-$%
interior (see, [22]) moreover, sufficiently choose the affine space
generated over $f\left( G\right) $, as due to conditions of Theorem on the $%
f\left( G\right) $ it will be the subspace of $Y$.\footnote{%
Brief notes on some properties of the linear spaces. Let $Y$ be $VS$. Each
hyperplane $L$ of $Y$ is equivalent to a subspace $Y_{0}$ of $Y$ with $%
co\dim _{Y}Y_{0}=1$ it separates the space to two half-space, which can be
denoted as $Y_{L}^{+}$ and $Y_{L}^{-}$. The $C-$interior point is defined as
ensuing way: a point $y_{0}$ of the subset $U\subset $ $Y$ called $C-$%
interior point if for $\forall y\in U$ there exists $\varepsilon >0$ such
that for all $\delta :0<\left\vert \delta \right\vert <$ $\varepsilon $ the
inclusion $y_{0}+\delta y\in U$ holds.}

Consequently, without loss of generality can be account that $Y_{2}\equiv
Y_{1}$. For simplicity, in the beginning, assume $Y_{1}=Y$. Then we obtain
there exists such subspace $Y_{3}\subset Y$ with respect to which $f\left(
G\right) $ belongs to one of half-spaces $Y_{Y_{3}}^{+}$ or $Y_{Y_{3}}^{-}$
(see, [23, 24]) and $f\left( G\right) \cap Y_{3}=\varnothing $, according to
the separation theorem of the convex subsets in $VS$ (see, [21]). But this
contradicts the condition (\ref{3.1}) this shows the correctness of the
state of the theorem in the case when $Y_{1}=Y$.

Let now $Y_{1}\subset Y$ and denote the mapping $f_{0}\left( x\right) =$ $%
f\left( x\right) \cap Y_{1}$ for $\forall x\in G$ then $f_{0}\left( G\right)
=$ $f\left( G\right) \cap Y_{1}$. Clear that $f_{0}\left( G\right) $ is
convex set in $Y_{1}$. Consequently, one can repeat the above reasoning for
the mapping $f_{0}\left( x\right) $ and the space $Y_{1}$ as independent $VS$
from $Y$ that again will give the same result as previous, which contradicts
the condition (\ref{3.1}).

Thus we obtain the correctness of the state of the Theorem \ref{Th_8}.
\end{proof}

From above-mentioned theorems ensue the correctness of the following
fixed-point theorem.

\begin{corollary}
(Fixed-Point Theorem)\label{C_3} Let the mapping $f$ acts into the space $X$
that is (a) $VS$ or (b) $LVTS$ and the convex subset $G\subseteq D\left(
f\right) $. Let $f:G\longrightarrow G$ and denote by $f_{1}:G\longrightarrow
G$ the mapping $f_{1}=Id-f$ ($f_{1}\left( x\right) =Id\ x-f\left( x\right) $
for $\forall x\in G$). Assume the mapping $f_{1}$ on the set $G$ satisfies
condition (\ref{3.1}), in the case (a);the condition of the Theorem \ref%
{Th_7}, in the case (b). Then the mapping $f$ in the set $G$ has a
fixed-point, i.e. there exists $x_{0}\in G$ such that $x_{0}\in f\left(
x_{0}\right) $ (if $f$ is single-value mapping then $f\left( x_{0}\right)
=x_{0}$).
\end{corollary}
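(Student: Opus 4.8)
The plan is to deduce the statement directly from the two "zero belongs to the image" theorems already proved, applied not to $f$ itself but to the auxiliary mapping $f_{1}=Id-f$. First I would record the elementary translation between the two problems: since $G\subseteq D(f)$ and $X$ carries a vector space structure, $f_{1}$ is well defined on all of $G$, with $f_{1}(x)=\{x-y\mid y\in f(x)\}$ in the multi-valued case and $f_{1}(x)=x-f(x)$ in the single-valued case, and in either case $0\in f_{1}(x)$ holds if and only if $x\in f(x)$ (respectively $f(x)=x$). Hence the conclusion of the corollary --- existence of $x_{0}\in G$ with $x_{0}\in f(x_{0})$ --- is \emph{equivalent} to the assertion $0\in f_{1}(G)$.

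The second and essentially final step is to check that, under the stated hypotheses, the auxiliary mapping $f_{1}$ satisfies all the hypotheses of the relevant theorem, and then to invoke it. In case (a), where $X$ is a vector space, the assumptions that $f_{1}(G)$ is a convex subset of $X$ and that $f_{1}$ fulfils condition (\ref{3.1}) on $G$ are exactly the hypotheses of Theorem \ref{Th_8} with the target space taken to be $X$ and the mapping taken to be $f_{1}$; that theorem then produces $x_{0}\in G$ with $0\in f_{1}(x_{0})$, which by the first step is a fixed point of $f$. In case (b), where $X$ is an $LVTS$, the assumption that $f_{1}(G)$ is a convex set of one of the two admissible types together with the functional inequality of Theorem \ref{Th_7} (for every $y^{\ast}\in\partial U^{\ast}\subset X^{\ast}$ there is $x\in G$ with $\{\langle y^{\ast},y\rangle\mid y\in f_{1}(x)\}$ meeting $(0,\infty)$, respectively $[0,\infty)$) are precisely the hypotheses of Theorem \ref{Th_7} with $Y=X$; applying it yields $0\in f_{1}(G)$, and again a fixed point of $f$.

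There is no substantial obstacle in this argument, because the mathematical content is carried entirely by Theorems \ref{Th_7} and \ref{Th_8}: the corollary is their specialization to the difference operator $Id-f$. The standing assumption $f\colon G\to G$ is not used directly in the proof; its purpose is to make the hypotheses on $f_{1}$ natural and verifiable in practice --- for instance, in the Hilbert-space version stated in the Introduction, $f(B_{r})\subseteq B_{r}$ forces $\langle x-f(x),x\rangle\ge 0$ on the sphere, which is the functional condition for $f_{1}$. The only point I would be careful to state explicitly at the outset is the equivalence $0\in f_{1}(G)\iff \exists\, x_{0}\in G:\ x_{0}\in f(x_{0})$, keeping the single-valued and multi-valued formulations in step so that the conclusion comes out in the form claimed.
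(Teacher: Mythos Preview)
Your proposal is correct and matches the paper's own approach: the paper simply writes ``The proof is obvious'' after the statement, and what you have spelled out --- the equivalence $0\in f_{1}(x_{0})\Longleftrightarrow x_{0}\in f(x_{0})$ followed by a direct invocation of Theorem~\ref{Th_8} in case~(a) and Theorem~\ref{Th_7} in case~(b) with $Y=X$ and the mapping $f_{1}$ --- is exactly the intended argument. Your side remark that the hypotheses $f:G\to G$ and the convexity of $G$ are not used in the deduction itself, but serve to make the conditions on $f_{1}$ verifiable in concrete situations, is also in line with how the paper uses this corollary later.
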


The proof is obvious. If this result compares with the Schauder and
Fan-Kakutani fixed-point theorems (see, e.g. [13, 17, 22, 23, 39]) then can
be to see it generalized these in above-mentioned sense.

It isn't difficult to see the correctness of following result.

\begin{corollary}
\label{C_4} Let the mapping $f$ acting from $LVTS$ $X$ to $LVTS$ $Y$ on some
subset $G\subseteq D\left( f\right) $ satisfies the following condition:
there exists a subspace $Y_{0}$ of $Y$ ($Y_{0}\subseteq Y$) such that $%
f\left( G\right) \cap Y_{0}$ is a convex set, moreover either (a) is open
(or closed), or with the nonempty interior with respect to $Y_{0}$. Then if
for each $y^{\ast }\in \partial U^{\ast }\cap Y_{0}^{\ast }\subseteq Y^{\ast
}$ there exists such element $x\in G$ that the expression 
\begin{equation*}
(a)\ \left\{ \left\langle y,y^{\ast }\right\rangle \left\vert \ y\in f\left(
x\right) \cap Y_{0}\right. \right\} \cap \left[ 0,\infty \right) \neq
\varnothing ;
\end{equation*}%
\begin{equation*}
\left( b\right) \ \left\{ \left\langle y,y^{\ast }\right\rangle \left\vert \
y\in f\left( x\right) \cap Y_{0}\right. \right\} \cap \left( 0,\infty
\right) \neq \varnothing
\end{equation*}%
holds. Then $0\in Y$ belongs to $f\left( G\right) $, i.e. $0\in f\left(
G\right) $.
\end{corollary}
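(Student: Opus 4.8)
The plan is to reduce Corollary \ref{C_4} to Theorem \ref{Th_7} by regarding the subspace $Y_{0}$ as a locally convex vector topological space in its own right and by replacing $f$ with the mapping whose values are the traces of $f$ on $Y_{0}$. First I would introduce $f_{0}:G\longrightarrow Y_{0}$, $f_{0}\left( x\right) =f\left( x\right) \cap Y_{0}$, restricting attention to those $x\in G$ for which this set is nonempty; note that the functional hypothesis already forces $f\left( x\right) \cap Y_{0}\neq \varnothing $ for the relevant $x$. Then $f_{0}\left( G\right) =f\left( G\right) \cap Y_{0}$, which by assumption is a convex subset of $Y_{0}$ that is open or closed in $Y_{0}$ in case (a), and has nonempty interior with respect to $Y_{0}$ in case (b). Thus, relative to the ambient space $Y_{0}$, the pair $\left( f_{0},G\right) $ satisfies hypothesis ii) of Theorem \ref{Th_7} in case (a) and hypothesis i) of Theorem \ref{Th_7} in case (b).

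Next I would match the functional hypothesis. Since $Y$ is locally convex, every continuous linear functional on $Y_{0}$ is the restriction of an element of $Y^{\ast }$ by Hahn--Banach, so the dual pairing of $\left( Y_{0},Y_{0}^{\ast }\right) $ is the restriction of that of $\left( Y,Y^{\ast }\right) $, and the trace on $Y_{0}^{\ast }$ of the boundary $\partial U^{\ast }$ of a convex closed bounded absorbing set $U^{\ast }\subset Y^{\ast }$ gives (or, invoking the footnote to $\partial U$, can be enlarged to) the boundary of such a set $U_{0}^{\ast }$ in $Y_{0}^{\ast }$. Hence the assumption that for every $y^{\ast }\in \partial U^{\ast }\cap Y_{0}^{\ast }$ there is $x\in G$ with $\left\{ \left\langle y,y^{\ast }\right\rangle \left\vert \ y\in f\left( x\right) \cap Y_{0}\right. \right\} \cap \left[ 0,\infty \right) \neq \varnothing $ (resp. $\cap \left( 0,\infty \right) \neq \varnothing $) is precisely the hypothesis of Theorem \ref{Th_7} written for $f_{0}$ and the pair $\left( Y_{0},Y_{0}^{\ast }\right) $: case (a) with $\left[ 0,\infty \right) $ matching Theorem \ref{Th_7}~ii), and case (b) with $\left( 0,\infty \right) $ matching Theorem \ref{Th_7}~i).

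Applying Theorem \ref{Th_7} to $f_{0}$ inside $Y_{0}$ then yields $0\in f_{0}\left( G\right) =f\left( G\right) \cap Y_{0}\subseteq f\left( G\right) $, which is the assertion; the single/multi-valued distinction requires nothing new, as $f_{0}$ inherits it from $f$. The one point that needs care, and which I expect to be the main obstacle, is exactly the identification described above of $\partial U^{\ast }\cap Y_{0}^{\ast }$ with the boundary of a legitimate convex closed bounded (or just balanced) absorbing subset of $Y_{0}^{\ast }$, so that the separation theorems for convex sets are applied \emph{inside} $Y_{0}$ rather than merely inside $Y$. Once this is in place --- again a consequence of local convexity and Hahn--Banach, handled exactly as the passage from $Y_{1}=Y$ to $Y_{1}\subset Y$ in the proof of Theorem \ref{Th_8} --- the remainder of the argument is that of Theorem \ref{Th_7} carried out verbatim within $Y_{0}$.
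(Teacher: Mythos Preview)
Your proposal is correct and follows exactly the approach the paper uses for the analogous reductions: the paper does not spell out a proof of Corollary~\ref{C_4} beyond the remark ``It isn't difficult to see the correctness of the following result,'' but your restriction $f_{0}(x)=f(x)\cap Y_{0}$ and application of Theorem~\ref{Th_7} inside $Y_{0}$ is precisely the device employed in the proof of Theorem~\ref{Th_8} (the passage from $Y_{1}=Y$ to $Y_{1}\subset Y$) and in Corollary~\ref{C_1}. Your care about identifying $\partial U^{\ast }\cap Y_{0}^{\ast }$ with the boundary of an absorbing set in $Y_{0}^{\ast }$ is the only subtle point, and the paper handles it implicitly in the same way you indicate, via the footnote on $\partial U$ and the separation theorems applied within the subspace.
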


Now we will reduce examples showing the essentialness of the conditions of
the above-proved theorems. Let $X$ be a reflexive Banach space and $%
Y=X^{\ast }$, i.e. dual space to the $X$. Assume $f:D\left( f\right)
\subseteq X\longrightarrow X^{\ast }$ and $B_{r_{0}}^{X}\left( x_{0}\right)
\subseteq D\left( f\right) $, moreover $f\left( B_{r_{0}}^{X}\left(
x_{0}\right) \right) =B_{r_{0}}^{X^{\ast }}\left( x_{0}^{\ast }\right)
\subset X^{\ast }$, where $r_{0}>0$ and the centers $x_{0}\in X,x_{0}^{\ast
}\in X^{\ast }$ of these balls such that $\left\Vert x_{0}\right\Vert
_{X},\left\Vert x_{0}^{\ast }\right\Vert _{X^{\ast }}>r_{0}$. Let the
mapping $f$ is such as the duality mapping between of the dula spaces, more
exactly, for $\forall x\in B_{r_{0}}^{X}\left( x_{0}\right) $ fulfill the
ensue expressions $f\left( x\right) =x^{\ast }\in B_{r_{0}}^{X^{\ast
}}\left( x_{0}^{\ast }\right) $ and $\left\langle f\left( x\right)
,x\right\rangle =\left\langle x^{\ast },x\right\rangle =\left\Vert
x\right\Vert _{X}\cdot \left\Vert x^{\ast }\right\Vert _{X^{\ast }}>0$.
Clearly, condition 1 satisfies, but condition 2 doesn't be satisfied as
there exist such $\widehat{x}\in S_{1}^{X}\left( 0\right) $ for which
doesn't be exists $\widetilde{x}\in B_{r_{0}}^{X}\left( x_{0}\right) $
satisfying $\left\langle f\left( \widetilde{x}\right) ,\widehat{x}%
\right\rangle \geq 0$, consequently, the claims of the theorems don't take
place. The essentialness of the convexity of the image is obvious.

We will prove one result in the case when the space $Y$ is $LVTS$, in some
sense, which is sufficient for fulfilling the conditions of Theorem \ref%
{Th_7}, for simplicity, in the case of the single-valued mappings.

\begin{proposition}
\label{Pr_1} Let $X,Y$ be $LVTS$, and $f:D\left( f\right) \subseteq
X\longrightarrow Y$ is the single-valued mapping. Let the image $f\left(
G\right) $ for some subset $G\subseteq D\left( f\right) $ is connected open
or closed body in $Y$. Then for each fixed element $y\in int\ f\left(
G\right) $, there exists such subset of $G$ on which for the mapping $%
f_{1}\left( x\right) =f\left( x\right) -y$ the conditions of Theorem \ref%
{Th_7} are fulfills.
\end{proposition}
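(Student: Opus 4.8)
The plan is to reduce the hypotheses of Theorem~\ref{Th_7} to statements about the geometry of $f(G)$ near the interior point $y$, using that $f(G)$ is a connected body (a set equal to the closure of its interior, or its interior, depending on open/closed). First I would fix $y\in \operatorname{int} f(G)$ and pass to the translated mapping $f_{1}=f-y$, so that $f_{1}(G)=f(G)-y$ is again a connected open (or closed) body with $0\in \operatorname{int} f_{1}(G)$. The key point is that $0$ is an \emph{interior} point of $f_{1}(G)$: this immediately forces $f_{1}(G)$ to be an absorbing set in the (relevant) space, and in particular there is a neighbourhood $V$ of $0$ with $V\subseteq f_{1}(G)$.

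Next I would construct, for each linear continuous functional $y^{\ast}\in\partial U^{\ast}\subset Y^{\ast}$, an element $x\in G$ witnessing the inequation in Theorem~\ref{Th_7}. Since $V\subseteq f_{1}(G)$ and $y^{\ast}\neq 0$, the functional $y^{\ast}$ is not identically zero on the neighbourhood $V$, so it takes a strictly positive value somewhere on $V$: there is $v\in V$ with $\langle y^{\ast},v\rangle>0$. Because $v\in f_{1}(G)=f_{1}(G')$ for the subset $G'\subseteq G$ consisting of those $x$ with $f_{1}(x)$ landing in, say, $V$ (or simply $G'=G$), there is $x\in G$ with $f_{1}(x)=v$, hence $\langle y^{\ast},f_{1}(x)\rangle>0$, which gives $\{\langle y^{\ast},y\rangle\mid y\in f_{1}(x)\}\cap(0,\infty)\neq\varnothing$. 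This is precisely condition (b) of Theorem~\ref{Th_7}; and since the open/interior case is the stronger one, the closed case (a) follows a fortiori by weakening $(0,\infty)$ to $[0,\infty)$. The convexity hypothesis in Theorem~\ref{Th_7} is not needed to \emph{verify} its hypotheses — it is used inside that theorem's own proof — so here I only need to exhibit the subset and the inequations.

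The subset of $G$ referred to in the statement can be taken to be $G$ itself, but to make the role of $y$ transparent I would instead describe it as $G_{y}:=f^{-1}\big(y+V\big)\cap G$, where $V$ is a balanced neighbourhood of $0$ with $y+V\subseteq \operatorname{int} f(G)$; then $f_{1}(G_{y})=V$ is open, balanced, hence absorbing in the subspace it spans, and the argument above applies verbatim on $G_{y}$. One then checks that if $f(G)$ is a closed body the same works with $V$ replaced by a closed balanced neighbourhood, giving the $[0,\infty)$ version.

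The main obstacle I anticipate is the interplay between ``body'' (closure of interior) and the functional-analytic notion of ``absorbing set'' that Theorem~\ref{Th_7} is phrased around: one must be careful that $\operatorname{int} f(G)\neq\varnothing$ genuinely yields an \emph{absorbing} neighbourhood of the chosen interior point after translation, and that the restriction of an arbitrary $y^{\ast}\in\partial U^{\ast}$ to this neighbourhood is nontrivial — this is where local convexity of $Y$ and the choice of $\partial U^{\ast}$ as the boundary of a closed convex bounded absorbing set in $Y^{\ast}$ must be used. Once that is pinned down, everything else is the routine translation/absorption argument sketched above.
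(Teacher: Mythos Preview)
Your construction $G_{y}=f^{-1}(y+V)\cap G$ is exactly the paper's argument: it takes a convex neighbourhood $V(y)\subseteq f(G)$ of $y$ (available because $Y$ is locally convex and $y$ is interior), sets $G_{1}=f^{-1}(V(y))$, and observes that $f_{1}(G_{1})=V(y)-y$ is then an open (or closed) convex set containing $0$, so Theorem~\ref{Th_7} applies.

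However, there is a genuine gap in your write-up. You assert that ``the convexity hypothesis in Theorem~\ref{Th_7} is not needed to verify its hypotheses''---but convexity of $f_{1}(\text{subset})$ \emph{is} one of the hypotheses of Theorem~\ref{Th_7} (conditions (i)/(ii) require $f(G)$ convex), not merely an ingredient of its proof. This misreading causes two concrete errors. First, your claim that ``the subset of $G$ \ldots\ can be taken to be $G$ itself'' is false: $f(G)$ is only assumed to be a connected body, not convex, so $f_{1}(G)=f(G)-y$ need not satisfy (i) or (ii). Second, when you do pass to $G_{y}$ you choose $V$ \emph{balanced} rather than \emph{convex}; a balanced neighbourhood need not be convex, so $f_{1}(G_{y})=V$ would again fail the hypothesis. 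The fix is trivial---in an LVTS one may take $V$ open (or closed) and convex---but it is precisely the point of the argument, and the paper's proof turns on it. Once $V$ is chosen convex, your verification of the inequation (a nonzero $y^{\ast}$ takes a positive value somewhere on a neighbourhood of $0$) is correct and in fact more explicit than what the paper writes down.
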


\begin{proof}
Indeed, from conditions of proposition imply for each point $y\in $ $f\left(
G\right) $ there exists an open or closed convex neighborhood $V\left(
y\right) \subseteq f\left( G\right) $ contains this point. Then by defining
the preimage $f^{-1}\left( V\left( y\right) \right) \subset G$ as $%
G_{1}\subset G$ it is enough to consider the mapping $f_{1}$ on the subset $%
G_{1}$, i.e. $f_{1}:G_{1}\longrightarrow V\left( y\right) $.
\end{proof}

We won't consider the cases of more general spaces. \ 

\section{\label{Sec_3}On Mappings acted in Reflexive Banach Spaces}

In this section, we will investigate mappings acted from one Banach space to
another. It is clear, the results obtained in Section \ref{Sec_2} remain
correct, and in this case. We will study this case separately since
well-known that the geometry of the reflexive Banach spaces was studied
sufficiently complete that allows proving more exact results. Therefore,
these results can be more applicable for in the detail studying various
problems.

So, in this section whole, we assume $X$ and $Y$ be the reflexive spaces
with the strongly convex norms jointly with their dual spaces (see, e.g. [7,
15, 22, 37, 38], etc.). As well-known, each reflexive Banach space can be
renormalized in such a way this space and its dual will the strongly convex
spaces (see, [37], and also [7, 15]). Consequently, in what follows we will
account that all of the examined reflexive Banach spaces are strongly convex
spaces, without loss of generality.

Let $X$ be strongly convex reflexive Banach space jointly with its dual
space $X^{\ast }$. For simplicity in the beginning assume $Y=X^{\ast }$ and
the mapping $f$ acts from $X$ to $X^{\ast }$. Then the main result of this
section will be formulated as follows.

\begin{theorem}
\label{Th_9} Let $f:D\left( f\right) \subseteq X\longrightarrow X^{\ast }$
be some mapping and $G\subseteq D\left( f\right) $. Assume $f\left( G\right) 
$ be convex subset in $X^{\ast }$ and there exists such subspace $%
X_{0}^{\ast }\subseteq X^{\ast }$ that belongs to the affine space $%
X_{f\left( G\right) }^{\ast }$ generating over $f\left( G\right) $ and
either $co\dim _{X^{\ast }}X_{0}^{\ast }\geq 1$ or $0\in X_{f\left( G\right)
}^{\ast }$; Moreover, there exists such $X_{1}\subseteq X$ that $X_{0}^{\ast
}\subseteq X_{1}^{\ast }$, $co\dim _{X_{1}}X_{0}\geq 0$ and for $\forall
x_{0}\in S_{1}^{X}\left( 0\right) \cap X_{1}$ the inequation 
\begin{equation}
\left\{ \left\langle x^{\ast },x_{0}\right\rangle \left\vert \ \exists x\in
G\ \&\ x^{\ast }\in f\left( x\right) \cap X_{1}^{\ast }\right. \right\} \cap
\left( 0,\infty \right) \neq \varnothing  \label{4.1}
\end{equation}%
holds. Then $0\in f\left( G\right) $, i.e. $\exists x_{1}\in
G\Longrightarrow $ $0\in f\left( x_{1}\right) $ (if the mapping $f$ is
singl-value then $f\left( x_{1}\right) =0$).
\end{theorem}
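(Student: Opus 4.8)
The plan is to argue by contradiction, exactly along the lines of the proofs of Theorems \ref{Th_5}, \ref{Th_7} and \ref{Th_8}, the only genuinely new ingredients being reflexivity and strong convexity. Reflexivity is what lets us identify the (continuous) dual of $X^{\ast }$ with $X$ itself, so that every continuous linear functional separating a convex set in $X^{\ast }$ is realized by an element of $X$; strong convexity of the norms of $X$ and $X^{\ast }$ is what makes the sphere $S_{1}^{X}\left( 0\right) $ behave as the boundary $\partial U$ of a closed balanced absorbing set in the sense of Section \ref{Sec_2}, so that a nonzero functional can be normalized onto $S_{1}^{X}\left( 0\right) $ canonically (via the duality map). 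So suppose, to get a contradiction, that $0\notin f\left( G\right) $.

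First I would clean up the affine geometry in the target. Consider the affine subspace $X_{f\left( G\right) }^{\ast }$ generated over $f\left( G\right) $; by hypothesis it contains the subspace $X_{0}^{\ast }$, and I claim $X_{f\left( G\right) }^{\ast }$ is itself a linear subspace of $X^{\ast }$. If $0\in X_{f\left( G\right) }^{\ast }$ this is immediate. Otherwise write $X_{f\left( G\right) }^{\ast }=Z^{\ast }+y_{0}$ with $y_{0}\in f\left( G\right) $ and $co\dim _{X^{\ast }}Z^{\ast }\geq 1$; then, as in the proof of Theorem \ref{Th_5}, $0$ is strictly separated from $X_{f\left( G\right) }^{\ast }$ by a continuous linear functional, which by reflexivity is an element $z\in X$ with $\left\langle x^{\ast },z\right\rangle <0$ for all $x^{\ast }\in f\left( G\right) $. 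Rescaling $z$ so that it lands in $S_{1}^{X}\left( 0\right) \cap X_{1}$ (legitimate because $X_{0}^{\ast }\subseteq X_{1}^{\ast }$ forces the separating direction into $X_{1}$ and because $B_{1}^{X}\left( 0\right) $ is absorbing) contradicts (\ref{4.1}). Hence $X_{f\left( G\right) }^{\ast }$ is a subspace, and, passing if necessary to a further subspace as in the $Y_{2}$-step of the proof of Theorem \ref{Th_8} and invoking [22], I may assume that $f\left( G\right) $ is a convex set with nonempty algebraic ($C$-) interior relative to $X_{f\left( G\right) }^{\ast }$.

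Now the core step. Since $f\left( G\right) $ is convex with nonempty relative algebraic interior and $0\notin f\left( G\right) $, the separation theorems for convex sets (Theorems \ref{Th_2}, \ref{Th_3}, \ref{Th_4}) produce a nonzero continuous linear functional on $X^{\ast }$ separating $f\left( G\right) $ from $0$; by reflexivity it is an element $x_{0}\in X$ with $\left\langle x^{\ast },x_{0}\right\rangle \leq 0$ for every $x^{\ast }\in f\left( G\right) $. Because the separating direction is constrained by $X_{0}^{\ast }\subseteq X_{1}^{\ast }$ and $X_{0}\subseteq X_{1}$ (the codimension hypotheses) and $B_{1}^{X}\left( 0\right) $ is absorbing, I may rescale so that $x_{0}\in S_{1}^{X}\left( 0\right) \cap X_{1}$. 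Applying (\ref{4.1}) to this $x_{0}$ gives $x\in G$ and $x^{\ast }\in f\left( x\right) \cap X_{1}^{\ast }\subseteq f\left( G\right) $ with $\left\langle x^{\ast },x_{0}\right\rangle >0$, contradicting $\left\langle x^{\ast },x_{0}\right\rangle \leq 0$. Hence $0\in f\left( G\right) $; in the single-valued case this reads $f\left( x_{1}\right) =0$ for some $x_{1}\in G$, and in the multivalued case $0\in f\left( x_{1}\right) $, since the whole argument uses only $f\left( G\right) =\bigcup _{x\in G}f\left( x\right) $.

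The hard part will be the bookkeeping that guarantees the separating functional can be chosen inside $X_{1}$ and then normalized onto $S_{1}^{X}\left( 0\right) \cap X_{1}$: this is precisely where the hypothesis that $X_{0}^{\ast }$ lies in the affine span $X_{f\left( G\right) }^{\ast }$, together with $X_{0}^{\ast }\subseteq X_{1}^{\ast }$ and the codimension conditions, is needed, reducing the infinite-dimensional situation to the finite-codimension argument already used for Theorem \ref{Th_5}. Strong convexity of the norms enters only to make the normalization step canonical and to justify treating $S_{1}^{X}\left( 0\right) $ as a set of the type $\partial U$; it carries no deeper weight in this particular theorem.
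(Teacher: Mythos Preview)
Your argument is essentially the same as the paper's: reduce by contradiction to a separation statement in $X^{\ast }$, invoke reflexivity to realize the separating functional as an element of $X$, normalize it onto $S_{1}^{X}\left( 0\right) \cap X_{1}$, and contradict (\ref{4.1}). The only difference is organizational: the paper packages the two cases ($X_{1}=X$ versus $X_{1}\subsetneq X$) and the density-of-the-affine-span step into separate auxiliary results (Lemmas \ref{L_4}, \ref{L_5}, \ref{L_6}) and then states that Theorem \ref{Th_9} follows from Lemmas \ref{L_4} and \ref{L_6}, whereas you run the whole contradiction in one pass.
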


For the proof of this theorem is necessary some auxiliary results,
therefore, previously need to prove these results. We start with a simple
variant of Theorem \ref{Th_9}.

\begin{lemma}
\label{L_4} Let $f:D\left( f\right) \subseteq X\longrightarrow X^{\ast }$ be
some mapping and $G\subseteq D\left( f\right) $. Assume $f\left( G\right) $
be convex subset in $X^{\ast }$ and there exists such subspace $X_{0}^{\ast
}\subseteq X^{\ast }$ that belongs to the affine space $X_{f\left( G\right)
}^{\ast }$ generated over $f\left( G\right) $ and $co\dim _{X^{\ast
}}X_{0}^{\ast }\geq 1$. Then if for $\forall x_{0}\in S_{1}^{X}\left(
0\right) $ there exists $x\in G$ such that $\left\{ \left\langle f\left(
x\right) ,x_{0}\right\rangle \right\} \cap \left( 0,\infty \right) \neq
\varnothing $ then $0\in f\left( G\right) $.\footnote{%
In what follow we will denote for briefness by $\left\{ \left\langle f\left(
x\right) ,x_{0}\right\rangle \right\} $ the set $\left\{ \left\langle
y,x_{0}\right\rangle \left\vert \ y\in f\left( x\right) \right. \right\} $,
where $\left( y,x_{0}\right) \in \left( X^{\ast },X\right) $.}
\end{lemma}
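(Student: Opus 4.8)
The plan is to argue by contradiction, exactly in the spirit of Lemmas \ref{L_1}--\ref{L_3} but carried out inside the appropriate subspace so as to exploit the hypothesis $co\dim_{X^{\ast}}X_{0}^{\ast}\geq 1$. First I would assume $0\notin f\left(G\right)$. Since $f\left(G\right)$ is a convex subset of $X^{\ast}$ whose generated affine hull contains the subspace $X_{0}^{\ast}$, and since $X$ is reflexive (so $X^{\ast\ast}=X$ and the separation theorems of Section \ref{Sec_2} apply with the dual form $\left\langle\circ,\circ\right\rangle$ on the pair $\left(X^{\ast},X\right)$), I can separate the point $0$ from $f\left(G\right)$: there exists a nonzero $\overline{x}\in X$, which after normalization may be taken in $S_{1}^{X}\left(0\right)$, such that $\left\langle y,\overline{x}\right\rangle\leq 0$ for every $y\in f\left(G\right)$. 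Here I would use Theorem \ref{Th_3} (or Theorem \ref{Th_2}/\ref{Th_4}) applied in $X^{\ast}$, noting that the functionals on $X^{\ast}$ are exactly the elements of $X$ by reflexivity, and that the condition $co\dim_{X^{\ast}}X_{0}^{\ast}\geq 1$ guarantees the affine hull of $f\left(G\right)$ is not all of $X^{\ast}$, so a genuine (nonzero) separating functional exists even when $f\left(G\right)$ has empty interior.

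Next I would simply evaluate the separating functional against the hypothesis. By assumption, applied to the particular point $x_{0}=\overline{x}\in S_{1}^{X}\left(0\right)$, there exists $x\in G$ with $\left\{\left\langle f\left(x\right),\overline{x}\right\rangle\right\}\cap\left(0,\infty\right)\neq\varnothing$, i.e. some $y\in f\left(x\right)\subseteq f\left(G\right)$ satisfies $\left\langle y,\overline{x}\right\rangle>0$. This directly contradicts $\left\langle y,\overline{x}\right\rangle\leq 0$ for all $y\in f\left(G\right)$. Hence $0\in f\left(G\right)$; if $f$ is single-valued this reads $f\left(x_{1}\right)=0$ for the corresponding $x_{1}\in G$, and in the multivalued case $0\in f\left(x_{1}\right)$.

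The main obstacle, and the point deserving the most care, is the separation step: one must be sure that when $f\left(G\right)$ is merely convex (not open, not closed, possibly with empty interior in $X^{\ast}$) there is still a nonzero continuous functional separating $0$ from it. This is precisely why the hypothesis is phrased in terms of the affine hull $X_{f\left(G\right)}^{\ast}$ and the subspace $X_{0}^{\ast}$ with $co\dim\geq 1$: it forces $f\left(G\right)$ to live inside a proper closed affine subspace of $X^{\ast}$, and $0\notin f\left(G\right)$ together with convexity then yields separation by an element of $X$ — one may either invoke Theorem \ref{Th_3} after passing to the closure, or relocate the discussion to the ambient subspace $X_{0}^{\ast}$ where $f\left(G\right)$ acquires nonempty relative interior (the $ri$ notion recalled before Theorem \ref{Th_5}) and apply Theorem \ref{Th_4} there, lifting the resulting relative functional to $X$ via reflexivity. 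I would also remark, as the author does for the earlier lemmas, that the choice of $G$ inside $D\left(f\right)$ plays no role; only the convexity of the image and the codimension condition matter, so the proof is genuinely a one-line separation argument once the geometric setup is in place.
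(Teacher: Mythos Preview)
Your overall contradiction strategy (separate $0$ from $f(G)$, then violate the positivity hypothesis at the separating direction) is the right instinct, but your key step does not go through as written, and the paper in fact takes a different route precisely to avoid the difficulty you flag.

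The gap is in your reading of the codimension hypothesis. The assumption is that some linear subspace $X_{0}^{\ast}$ \emph{contained in} the affine hull $X_{f(G)}^{\ast}$ has $co\dim_{X^{\ast}}X_{0}^{\ast}\geq 1$; it says nothing about the codimension of $X_{f(G)}^{\ast}$ itself. So your sentence ``the condition $co\dim_{X^{\ast}}X_{0}^{\ast}\geq 1$ guarantees the affine hull of $f(G)$ is not all of $X^{\ast}$'' is not justified, and with it your separation argument collapses in the bad case (convex, neither open nor closed, empty interior). Indeed, the paper's own Lemma \ref{L_5} shows that under the positivity hypothesis the affine hull is \emph{everywhere dense} in $X^{\ast}$, which is the opposite of what your argument needs; a nonzero annihilating functional $\overline{x}\in X$ then does not exist, and passing to the closure does not help either since one cannot exclude $0\in\overline{f(G)}\setminus f(G)$.

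The paper's argument bypasses this as follows: if $f(G)$ already has nonempty interior one invokes Theorem \ref{Th_7} directly; otherwise one first proves Lemma \ref{L_5} (the affine hull is dense), then picks a subspace $X_{1}^{\ast}\subseteq X_{f(G)}^{\ast}$ in which $f(G)\cap X_{1}^{\ast}$ has nonempty interior relative to $X_{1}^{\ast}$, checks that the positivity condition restricts to $X_{1}^{\ast}$, and applies Theorem \ref{Th_7} there. Thus Lemma \ref{L_5} is the missing ingredient you would need, and the reduction to a subspace with nonempty relative interior replaces your attempted direct separation.
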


\begin{proof}
The proof ensues from the Theorem \ref{Th_7} if here the condition for the
case (\textit{i}) of this theorem fulfills. Assume this condition isn't
fulfilled then we will use the following result that will be proved later.
\end{proof}

\begin{lemma}
\label{L_5} Let $f:D\left( f\right) \subseteq X\longrightarrow Y$ be some
mapping and $G\subseteq D\left( f\right) \subseteq X$, where the spaces $X$, 
$Y$ be $LVTS$. Let $U^{\ast }\subset Y^{\ast \text{ }}$be a closed bounded
balanced absorbing set in $Y^{\ast \text{ }}$with the boundary $\partial
U^{\ast }$. Then if for $\forall y^{\ast }\in \partial U^{\ast }$ there
exists $x\in G$ such that $\left\{ \left\langle f\left( x\right) ,y^{\ast
}\right\rangle \right\} \cap \left( 0,\infty \right) \neq \varnothing $
fulfills then the affine space $Y_{f\left( G\right) }$ generated over $%
f\left( G\right) $ , at least, is everywhere dense affine subspace in the
space $Y$.
\end{lemma}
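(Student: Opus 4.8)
The plan is to argue by contraposition: suppose the affine span $Y_{f(G)}$ is not an everywhere-dense affine subspace of $Y$, and produce a functional $y^{\ast}\in\partial U^{\ast}$ witnessing the failure of the hypothesis. First I would note that since $\partial U^{\ast}$ is the boundary of a closed bounded balanced absorbing set, it is an absorbing-type ``scaling surface'' for $Y^{\ast}$: every nonzero continuous linear functional on $Y$ is a positive multiple of some element of $\partial U^{\ast}$. Hence the hypothesis ``for all $y^{\ast}\in\partial U^{\ast}$ there is $x\in G$ with $\{\langle f(x),y^{\ast}\rangle\}\cap(0,\infty)\neq\varnothing$'' is equivalent to: for every nonzero $y^{\ast}\in Y^{\ast}$ there is a point of $f(G)$ on which $y^{\ast}$ takes a strictly positive value, i.e. no nonzero continuous functional is $\leq 0$ on all of $f(G)$.

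Next I would analyze what the negation of the conclusion gives. The closure $\overline{Y_{f(G)}}$ is a closed affine subspace of $Y$; if it is not all of $Y$ (i.e. $Y_{f(G)}$ is not dense), then either $\overline{Y_{f(G)}}$ is a proper closed \emph{linear} subspace, or it is a proper closed affine subspace not through the origin. In the first case, by the Hahn--Banach separation theorem in the $LVTS$ $Y$ (Theorem \ref{Th_3}/Theorem \ref{Th_4}) there is a nonzero $y^{\ast}\in Y^{\ast}$ vanishing identically on $\overline{Y_{f(G)}}\supseteq f(G)$; then $\langle f(x),y^{\ast}\rangle=0$ for all $x\in G$, so $\{\langle f(x),y^{\ast}\rangle\}\cap(0,\infty)=\varnothing$ — rescaling $y^{\ast}$ to lie on $\partial U^{\ast}$ contradicts the hypothesis. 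In the second case, since $0\notin\overline{Y_{f(G)}}$ and the latter is closed convex, Theorem \ref{Th_3} yields a nonzero $y^{\ast}\in Y^{\ast}$ and a constant with $\langle y,y^{\ast}\rangle\le c_{0}-c<c_{0}=0$ for all $y\in \overline{Y_{f(G)}}$ (after choosing the sign of $y^{\ast}$ so that the origin is on the larger side); thus $\langle f(x),y^{\ast}\rangle<0$ for all $x\in G$, and again, after normalizing $y^{\ast}$ onto $\partial U^{\ast}$, this contradicts the hypothesis.

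So in either case the negation of the conclusion contradicts the hypothesis, which establishes the lemma. The routine parts are the Hahn--Banach applications and the rescaling of functionals onto $\partial U^{\ast}$ using that $U^{\ast}$ is balanced, bounded and absorbing. The main obstacle I anticipate is the second case: one must be careful that when $\overline{Y_{f(G)}}$ is a proper \emph{affine} (non-linear) subspace, the separating functional genuinely satisfies the \emph{strict} inequality $\langle f(x),y^{\ast}\rangle<0$ (not merely $\le 0$) on all of $f(G)$ — here one uses that a functional constant on an affine subspace takes that constant value on the whole subspace, and the constant is strictly negative because $0$ is separated \emph{off} $\overline{Y_{f(G)}}$; picking the correct sign of $y^{\ast}$ is the delicate bookkeeping point. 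One should also double check the degenerate edge case $f(G)=\varnothing$ or a singleton, where the statement is vacuous or immediate.
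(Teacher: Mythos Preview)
Your proposal is correct and follows essentially the same route as the paper: argue by contradiction, pass to the closure $\overline{Y_{f(G)}}$, and use Hahn--Banach separation to produce a nonzero functional that is nonpositive on all of $f(G)$, contradicting the hypothesis after rescaling onto $\partial U^{\ast}$. Your treatment is in fact more careful than the paper's, which asserts directly that $\langle y^{\ast},Y_{1}\rangle<0$ without distinguishing the case where $\overline{Y_{f(G)}}$ passes through the origin; your case split (linear subspace versus affine subspace off the origin) and the explicit remark about rescaling onto $\partial U^{\ast}$ fill in exactly the bookkeeping the paper leaves implicit.
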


\begin{proof}
Continuation of the proof of Lemma \ref{L_4}. According to Lemma \ref{L_5}
under the condition of Lemma \ref{L_4} \ $X_{f\left( G\right) }^{\ast }$ is,
at least, everywhere dense linear subspace in $X^{\ast }$. Let $X_{1}^{\ast
} $ be a subspace of $X^{\ast }$ that belongs to $X_{f\left( G\right)
}^{\ast } $. Then $f\left( G\right) \cap X_{1}^{\ast }$ is the convex set
with nonempty interiors in $X_{1}^{\ast }$ (see, e.g. [5, 17, 21, 22]). As $%
X $ is the reflexive Banach space then $X_{1}^{\ast }$ also is the reflexive
space. Obviously that under the conditions of Lemma \ref{L_4} on $%
X_{1}^{\ast }$ for $\forall y^{\ast }\in \partial U^{\ast }\cap X_{1}^{\ast
} $ there exists $x\in G$ such that $\left\{ \left\langle f\left( x\right)
\cap X_{1}^{\ast },y^{\ast }\right\rangle \right\} \cap \left( 0,\infty
\right) \neq \varnothing $ fulfills (for the proof see, [32]). Thus, we get
that with respect to $X_{1}^{\ast }$ for the examined mapping all conditions
of the Theorem \ref{Th_7} are fulfilled, therefore using this theorem the
correctness of the claim of Lemma \ref{L_4} is obtained.
\end{proof}

It is remains to prove of the Lemma \ref{L_5}.

\begin{proof}
(of the Lemma \ref{L_5}) The proof we bring from inverse. Let $Y_{f\left(
G\right) }$ isn't the everywhere dense affine subspace in the space $Y$.
Denote the closure of $Y_{f\left( G\right) }$ by $Y_{1}\equiv \overline{%
Y_{f\left( G\right) }}^{Y}$. According to assumption $Y_{1}\subset $ $Y$ and 
$Y_{1}\neq Y$ moreover, which is the closed convex affine subspace. Then
there exists $y^{\ast }\in Y^{\ast }$ such that $\left\langle y^{\ast
},Y_{1}\right\rangle <0$, which is contradicts to the conditin of Lemma \ref%
{L_5}.
\end{proof}

Now we provide one result using that one can to prove the Lemma \ref{L_4} by
another way.

\begin{proposition}
\label{P_1} Let $X$ be strongly convex reflexive Banach space jointly with
its dual space $X^{\ast }$, and $\Omega \subset X$ be bounded convex set.
Then if each of one-dimension subspace $L$ from $X$ intersects $\Omega $,
i.e. $L\cap \Omega \neq \varnothing $ then either $0\in \Omega $ or $\Omega $
is such convex body that $0$ of $X$ belongs to the closure of $\Omega $,
i.e. $0\in \overline{\Omega }$.
\end{proposition}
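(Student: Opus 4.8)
The plan is to argue by contradiction through a Hahn--Banach separation, in the same spirit as the proofs of Lemmas~\ref{L_1}--\ref{L_3} and Theorem~\ref{Th_7}. It is convenient to split the statement into two parts: (a) $0\in\overline{\Omega}$ in any case, and (b) if moreover $0\notin\Omega$ then $\Omega$ has nonempty interior, so that $0\in\overline{\Omega}\setminus\Omega$ and $\Omega$ is a convex body. Part (a) is routine; part (b) is where I expect the real work.

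For (a), suppose $0\notin\overline{\Omega}$. Since $\overline{\Omega}$ is a closed convex subset of the (locally convex) Banach space $X$, Theorem~\ref{Th_3} applied with $M=\overline{\Omega}$ and $x_{0}=0$ produces a nonzero $\xi\in X^{\ast}$ and a constant $c>0$ with $\langle x,\xi\rangle\le-c$ for all $x\in\overline{\Omega}$; in particular the numbers $\langle x,\xi\rangle$ stay bounded away from $0$ on $\overline{\Omega}$. As $\xi\neq0$, its kernel contains a nonzero vector $v$, and then the one-dimensional subspace $L=\{tv:t\in\mathbb{R}\}$ satisfies $\langle tv,\xi\rangle=0$ for every $t$, hence $L\cap\overline{\Omega}=\varnothing$ and a fortiori $L\cap\Omega=\varnothing$, contradicting the hypothesis that every one-dimensional subspace of $X$ meets $\Omega$. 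Therefore $0\in\overline{\Omega}$. This step uses only separation of a point from a closed convex set, not reflexivity or strong convexity.

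For (b), assume $0\notin\Omega$, so by (a) the origin is a boundary point of $\overline{\Omega}$. The hypothesis can be reformulated as follows: since $0\notin\Omega$, for every $v\neq0$ there is a $t_{v}\neq0$ with $t_{v}v\in\Omega$; hence the balanced convex set $W:=\operatorname{conv}(\Omega\cup(-\Omega))$ contains both $t_{v}v$ and $-t_{v}v$, and therefore contains $sv$ for all $|s|\le|t_{v}|$, i.e.\ $W$ is absorbing. Consequently $\overline{W}$ is a closed, bounded, balanced, convex, absorbing subset of the complete space $X$, so $X=\bigcup_{n\ge1}n\overline{W}$ is a countable union of closed sets; by the Baire category theorem some $n\overline{W}$, and hence $\overline{W}$ itself, has nonempty interior, and being balanced and convex it then contains a ball centred at $0$. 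It remains to transfer the nonemptiness of the interior from $\overline{W}$ back to $\Omega$, and this is the step I expect to be the main obstacle: in general a convex set can be norm-dense in a convex body without being one, so the line-meeting hypothesis has to be used in an essential way here, the point being that it forces $\Omega$ to exhaust its conical hull $\operatorname{cone}(\Omega)=\bigcup_{t\ge0}t\Omega$. This is precisely where the standing hypotheses that $X$ and $X^{\ast}$ are reflexive and strongly (strictly) convex enter: reflexivity makes $\overline{\Omega}$ weakly compact and the conical hulls well behaved, and one would then locate an interior point of $\operatorname{cone}(\Omega)$, write it as $t_{0}\omega_{0}$ with $\omega_{0}\in\Omega$ and $t_{0}>0$, and deduce $\omega_{0}\in\operatorname{int}\Omega$ by a scaling argument combined with $0\in\overline{\Omega}$ and strict convexity. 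The separation and Baire steps are otherwise entirely straightforward; the transfer is the crux.
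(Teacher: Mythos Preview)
The paper does not actually prove Proposition~\ref{P_1}: it is stated and followed only by the one-line observation that ``under the condition of this result the affine space generated over $\Omega$ contains a linear subspace of $X$,'' after which the text passes directly to Lemma~\ref{L_6}. There is therefore no argument in the paper against which to compare your approach.

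Your part~(a) is correct and entirely in the spirit of the separation arguments used elsewhere in the paper (Lemmas~\ref{L_1}--\ref{L_3}, Theorem~\ref{Th_7}): if $0\notin\overline{\Omega}$, the strictly separating functional $\xi$ has nontrivial kernel (as $\dim X\ge 2$), and any one-dimensional subspace inside $\ker\xi$ misses $\overline{\Omega}$, contradicting the hypothesis. This already yields the paper's remark about the affine hull of $\Omega$ containing a subspace, and it uses neither reflexivity nor strict convexity.

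Part~(b), however, is a genuine gap, and you are right to flag it as the crux. Your Baire argument is sound and shows that $\overline{W}=\overline{\operatorname{conv}(\Omega\cup(-\Omega))}$ is a neighbourhood of the origin, but the transfer from $\operatorname{int}\overline{W}\ne\varnothing$ back to $\operatorname{int}\Omega\ne\varnothing$ is not carried out. The sketch you offer is too vague to count as a proof: it is unclear what ``$\Omega$ exhausts its conical hull'' is meant to assert, how strict convexity of the norm would enter, or why an interior point of $\operatorname{cone}(\Omega)$ written as $t_0\omega_0$ with $\omega_0\in\Omega$ should force $\omega_0\in\operatorname{int}\Omega$ --- the scaling factor varies from point to point, and convexity of $\Omega$ by itself does not control that variation. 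Since the paper supplies no argument for this step either, you have no guidance from the source on how (or whether) it can be closed; as your proposal stands, only the conclusion $0\in\overline{\Omega}$ is established.
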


It is clear that under the condition of this result the affine space
generated over $\Omega $ contains a linear subspace of $X$.

\begin{lemma}
\label{L_6} Let conditions of the Theorem \ref{Th_9} fulfilled, where the
expression (\ref{4.1}) fulfilled in the following form. Let $X_{1}\subset X$
be a subspace and $co\dim _{X_{1}}X_{f\left( G\right) }\geq 0$. Then if for $%
\forall x_{0}\in S_{1}^{X}\left( 0\right) \cap X_{1}$ there exists $x\in G$
such that $\left\{ \left\langle f\left( x\right) \cap X_{1}^{\ast
},x_{0}\right\rangle \right\} \cap \left( 0,\infty \right) \neq \varnothing $
holds. Then $0\in f\left( G\right) $.
\end{lemma}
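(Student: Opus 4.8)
The plan is to recognize Lemma \ref{L_6} as essentially a restricted version of Lemma \ref{L_4}, but carried out inside the subspace $X_{1}$ rather than the full space $X$. First I would pass to the subspace $X_{1}\subseteq X$ and treat it as an independent strongly convex reflexive Banach space in its own right (with dual $X_{1}^{\ast}$), exactly as was done in the proof of Theorem \ref{Th_8} when reducing from $Y_{1}=Y$ to the case $Y_{1}\subset Y$. Define the auxiliary mapping $f_{1}(x)=f(x)\cap X_{1}^{\ast}$ for $x\in G$, so that $f_{1}(G)=f(G)\cap X_{1}^{\ast}$, which is again a convex subset of $X_{1}^{\ast}$. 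The hypothesis $co\dim_{X_{1}}X_{f(G)}\geq 0$ together with the fact that $X_{0}^{\ast}\subseteq X_{1}^{\ast}$ lies in the affine span $X_{f(G)}^{\ast}$ ensures that the affine space generated by $f_{1}(G)$ inside $X_{1}^{\ast}$ still contains a subspace of the appropriate codimension, so the structural hypothesis of Lemma \ref{L_4} is met for $f_{1}$ relative to $X_{1}$.

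Next I would verify the sign condition. By hypothesis, for every $x_{0}\in S_{1}^{X}(0)\cap X_{1}$ there is an $x\in G$ with $\{\langle f(x)\cap X_{1}^{\ast},x_{0}\rangle\}\cap(0,\infty)\neq\varnothing$; but $S_{1}^{X}(0)\cap X_{1}$ is precisely the unit sphere $S_{1}^{X_{1}}(0)$ of the subspace $X_{1}$, and $\langle f(x)\cap X_{1}^{\ast},x_{0}\rangle=\langle f_{1}(x),x_{0}\rangle$ when $x_{0}\in X_{1}$. Hence the pairing condition of Lemma \ref{L_4} holds for the mapping $f_{1}$ on $G$ with respect to the space $X_{1}$. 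Invoking Lemma \ref{L_4} in $X_{1}$ then yields $0\in f_{1}(G)=f(G)\cap X_{1}^{\ast}\subseteq f(G)$, which is the desired conclusion.

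The one point that needs genuine care — and which I expect to be the main obstacle — is the passage from the everywhere-dense affine subspace produced by Lemma \ref{L_5} (applied now inside $X_{1}^{\ast}$) to an actual closed subspace $X_{1}^{\ast}$ on which $f_{1}(G)$ has nonempty interior. In the full-space argument of Lemma \ref{L_4} this step used that $f(G)\cap X_{1}^{\ast}$ has nonempty interior in $X_{1}^{\ast}$ because $X_{1}^{\ast}$ is a closed subspace contained in the dense affine span; here one must check that the codimension bookkeeping $co\dim_{X_{1}}X_{f(G)}\geq 0$ is exactly what licenses the same conclusion relative to $X_{1}$, so that all hypotheses of Theorem \ref{Th_7} (case i) are available for $f_{1}$ on that subspace. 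Once this is in place, the reflexivity and strong convexity of $X_{1}$ and $X_{1}^{\ast}$ (inherited from $X$) are used only to guarantee the relevant separation/interior properties, and the conclusion follows as above.
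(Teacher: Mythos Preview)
Your proposal is correct and follows exactly the approach the paper intends: the paper states only that the proof is ``analogous to the proof of Lemma~\ref{L_4}'' and omits the details, and your reduction---passing to the subspace $X_{1}$, defining $f_{1}(x)=f(x)\cap X_{1}^{\ast}$, and rerunning the Lemma~\ref{L_4} argument (via Lemma~\ref{L_5} and Theorem~\ref{Th_7}) inside $X_{1}$---is precisely that analogy spelled out. The potential obstacle you flag (the nonempty-interior step licensed by the codimension hypothesis) is the one genuine point to track, and it is handled just as in Lemma~\ref{L_4} once everything is restricted to $X_{1}$.
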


The proof of this lemma analogously to the proof of the Lemma \ref{L_4},
therefore we don't provide it.

\begin{proof}
(of the Theorem \ref{Th_9}) This proof follows from lemmas \ref{L_4} and \ref%
{L_6}. Indeed Lemma \ref{L_4} shows correctness of the Theorem \ref{Th_9}
from one side, and Lemma \ref{L_6} shows correctness of the Theorem \ref%
{Th_9} from in another side. Consequently, the Theorem \ref{Th_9} complete
proved.
\end{proof}

The correctness of the following statements immediately ensues from the
above-mentioned results.

\begin{corollary}
\label{C_5}(Fixed-Point Theorem) Let $f$ be a mapping acting in the
reflexive Banach space $X$ and $B_{r}^{X}\left( x_{0}\right) \subseteq
D\left( f\right) \subseteq X$ be a closed ball. Assume $f$ map $%
B_{r}^{X}\left( x_{0}\right) $ into itself, where $r>0$ is a number. Let $%
f_{1}$ be a mapping defined as $f_{1}\left( x\right) \equiv x-f\left(
x\right) $ for $\forall x\in B_{r}^{X}\left( x_{0}\right) $. Then if the
image$\ f_{1}\left( B_{r}^{X}\left( x_{0}\right) \right) $ of the ball $%
B_{r}^{X}\left( x_{0}\right) $ is a convex set either open (closed) or$\
f_{1}\left( B_{r}^{X}\left( x_{0}\right) \right) \subset int\
B_{r}^{X}\left( x_{0}\right) $ and $int\ f_{1}\left( B_{r}^{X}\left(
x_{0}\right) \right) \neq \varnothing $ then the mapping $f$ has a fixed
point in the ball $B_{r}^{X}\left( x_{0}\right) $.
\end{corollary}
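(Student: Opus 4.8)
The plan is to deduce this fixed-point statement directly from Theorem \ref{Th_9} applied to the auxiliary mapping $f_{1}$. First I would observe that $x\in B_{r}^{X}\left( x_{0}\right) $ is a fixed point of $f$ precisely when $f_{1}\left( x\right) \ni 0$ (or $=0$ in the single-valued case), so it suffices to show $0\in f_{1}\left( B_{r}^{X}\left( x_{0}\right) \right) $. Since $X$ is a strongly convex reflexive Banach space jointly with its dual, Theorem \ref{Th_9} (and the lemmas feeding it) is available, and we take $G=B_{r}^{X}\left( x_{0}\right) $. By hypothesis $f_{1}\left( G\right) $ is convex and is either open/closed or satisfies $f_{1}\left( G\right) \subset \operatorname{int} B_{r}^{X}\left( x_{0}\right) $ with nonempty interior; in either situation the convexity requirement of Theorem \ref{Th_9} holds, and we may take $X_{1}=X$, $X_{0}^{\ast }=X^{\ast }$, so that $\operatorname{co\,dim}_{X^{\ast }}X_{0}^{\ast }=0$ but $0\in X_{f_{1}\left( G\right) }^{\ast }$ is forced once we verify the boundary inequation (\ref{4.1}).

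The key step is to verify the geometric condition: for every $x_{0}^{\ast }\in S_{1}^{X}\left( 0\right) $ there is $x\in B_{r}^{X}\left( x_{0}\right) $ with $\left\langle f_{1}\left( x\right) ,x_{0}^{\ast }\right\rangle \cap \left( 0,\infty \right) \neq \varnothing $ — equivalently, that the origin cannot be separated from $f_{1}\left( G\right) $ by a hyperplane through $0$. I would argue by contradiction: if $0\notin f_{1}\left( G\right) $, then by the separation theorems for convex sets in locally convex spaces (Theorem \ref{Th_3} or \ref{Th_4}) there is a unit functional $z^{\ast }$ with $\left\langle y,z^{\ast }\right\rangle \leq 0$ for all $y\in f_{1}\left( G\right) $. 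Writing $y=x-f\left( x\right) $ with $x$ ranging over $B_{r}^{X}\left( x_{0}\right) $, this says $\left\langle x,z^{\ast }\right\rangle \leq \left\langle f\left( x\right) ,z^{\ast }\right\rangle $ for all such $x$; but since $f$ maps $B_{r}^{X}\left( x_{0}\right) $ into itself, both $x$ and $f\left( x\right) $ lie in the same ball, and choosing $x$ to be the point of the ball where $\left\langle \cdot ,z^{\ast }\right\rangle $ is maximal (which exists by reflexivity and weak compactness of the ball) yields $\left\langle x,z^{\ast }\right\rangle \geq \left\langle f\left( x\right) ,z^{\ast }\right\rangle $, whence $\left\langle x,z^{\ast }\right\rangle =\left\langle f\left( x\right) ,z^{\ast }\right\rangle $ for that $x$ — so $\left\langle f_{1}\left( x\right) ,z^{\ast }\right\rangle =0\in \left[ 0,\infty \right) $. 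In the open/closed case this already meets the hypothesis of Theorem \ref{Th_7}; in the case $f_{1}\left( G\right) \subset \operatorname{int} B_{r}^{X}\left( x_{0}\right) $ one gets the strict inequality because the separating functional cannot support $f_{1}\left( G\right) $ from the interior of a ball without the supporting point being a genuine maximizer, contradicting strong convexity of the norm.

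Having verified (\ref{4.1}) (with $X_{1}=X$), Theorem \ref{Th_9} applies and gives $0\in f_{1}\left( B_{r}^{X}\left( x_{0}\right) \right) $, i.e. there is $x_{1}\in B_{r}^{X}\left( x_{0}\right) $ with $x_{1}\in f\left( x_{1}\right) $ (equality if $f$ is single-valued), which is the asserted fixed point. I expect the main obstacle to be the verification of the boundary condition (\ref{4.1}): the self-map hypothesis $f\left( B_{r}^{X}\left( x_{0}\right) \right) \subseteq B_{r}^{X}\left( x_{0}\right) $ must be combined with strong convexity of the norm and reflexivity (to guarantee the maximizer of a linear functional on the ball exists and is unique) to rule out separation of $0$ from $f_{1}\left( G\right) $; the rest is a routine translation of Theorem \ref{Th_9} to this setting. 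One should also check the harmless technical point that shifting the ball's center to $x_{0}$ rather than $0$ does not affect the argument, since any closed ball in $X$ is a closed convex absorbing set up to translation and the functionals in Theorems \ref{Th_3}–\ref{Th_4} separate regardless of the center.
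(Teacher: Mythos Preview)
Your approach is essentially what the paper intends (the paper itself gives no proof beyond ``immediately ensues from the above-mentioned results''), and the maximizer step---choosing $\bar{x}$ where $\langle\,\cdot\,,y^{\ast}\rangle$ attains its maximum on the closed ball, using reflexivity for existence---is indeed the crux. Two small corrections are worth making. First, Theorem~\ref{Th_9} is stated for mappings $X\to X^{\ast}$, while $f_{1}$ maps $X\to X$; the correct reference is Theorem~\ref{Th_7} with $Y=X$ (so $Y^{\ast}=X^{\ast}$), or one simply argues directly by separation as you in effect do. Second, your contradiction framing is tangled: you start by assuming $0\notin f_{1}(G)$ and produce a single separating $z^{\ast}$, but then speak of ``meeting the hypothesis of Theorem~\ref{Th_7}'', which requires the inequality for \emph{every} $y^{\ast}$. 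The clean route is to drop the contradiction wrapper and observe that the maximizer argument works for an arbitrary $y^{\ast}\in S_{1}^{X^{\ast}}(0)$: since $X$ is strictly convex the maximizer $\bar{x}$ is unique on $S_{r}^{X}(x_{0})$, so either $\bar{x}\in f(\bar{x})$ and we are done, or every $w\in f(\bar{x})\subseteq B_{r}^{X}(x_{0})$ satisfies $\langle w,y^{\ast}\rangle<\langle\bar{x},y^{\ast}\rangle$ strictly, giving $\langle f_{1}(\bar{x}),y^{\ast}\rangle\cap(0,\infty)\neq\varnothing$. This verifies the strict form of the Theorem~\ref{Th_7} condition uniformly and makes your separate (and somewhat vague) treatment of the ``nonempty interior'' case unnecessary.
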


\begin{corollary}
\label{C_6}(Fixed-Point Theorem) Let $f$ be a mapping acting in the
reflexive Banach space $X$ and $B_{r}^{X}\left( x_{0}\right) \subseteq
D\left( f\right) \subseteq X$ be a closed ball. Assume $f$ map $%
B_{r}^{X}\left( x_{0}\right) $ into itself, where $r>0$ is a number. Let $%
f_{1}$ be a mapping defined as $f_{1}\left( x\right) \equiv x-f\left(
x\right) $ for $\forall x\in B_{r}^{X}\left( x_{0}\right) $. Then if the
image$\ f_{1}\left( B_{r}^{X}\left( x_{0}\right) \right) $ of the ball $%
B_{r}^{X}\left( x_{0}\right) $ is a convex set and $\ f_{1}\left(
B_{r}^{X}\left( x_{0}\right) \right) \subset int\ B_{r}^{X}\left(
x_{0}\right) $ moreover, the affine space generated over$\ f_{1}\left(
B_{r}^{X}\left( x_{0}\right) \right) $ contains some subspace of $X$ then
the mapping $f$ has a fixed point in the ball $B_{r}^{X}\left( x_{0}\right) $%
.
\end{corollary}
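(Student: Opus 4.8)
The plan is to obtain Corollary \ref{C_6} by checking that the displacement mapping $f_{1}=Id-f$ and the set $G=B_{r}^{X}\left( x_{0}\right) $ meet the hypotheses of Theorem \ref{Th_9} (equivalently of Corollary \ref{C_4} with $Y=X$, via Lemmas \ref{L_4} and \ref{L_6}), so that the resulting $0\in f_{1}\left( G\right) $ is the desired fixed point. First I would record the trivial equivalence: $x_{1}\in B_{r}^{X}\left( x_{0}\right) $ is a fixed point of $f$ precisely when $0\in f_{1}\left( x_{1}\right) $ (equivalently $f_{1}\left( x_{1}\right) =0$ if $f$ is single-valued), so the whole statement reduces to $0\in f_{1}\left( G\right) $. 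Reflexivity and strong convexity of $X$ (together with its dual) will be used only to produce, for each unit continuous functional, a norming vector of $X$, which is what makes the acute-angle-type condition effective.

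The verification has three ingredients. (i) Convexity of $f_{1}\left( G\right) $ is assumed. (ii) By hypothesis the affine hull $X_{f_{1}\left( G\right) }$ of $f_{1}\left( G\right) $ contains a linear subspace of $X$; this hull then contains $0$ and so is itself a linear subspace, and I would take $X_{1}$ to be its closure --- a closed subspace of $X$, hence again a strongly convex reflexive space with the inherited norm --- noting $f_{1}\left( G\right) \subseteq X_{1}$. This realizes the alternative ``$0\in X_{f_{1}\left( G\right) }$'' in Theorem \ref{Th_9} (respectively the subspace $Y_{0}=X_{1}$ in Corollary \ref{C_4}), and relative to $X_{1}$ the convex set $f_{1}\left( G\right) $ has nonempty algebraic ($C$-)interior, so the separation apparatus of Theorems \ref{Th_3}, \ref{Th_4}, \ref{Th_7} and Lemmas \ref{L_4}, \ref{L_6} becomes available inside $X_{1}$, exactly as in the proof of Theorem \ref{Th_8}. (iii) The substantive ingredient is the pairing condition (\ref{4.1}). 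Given a unit functional $x_{0}^{\ast }\in S_{1}^{X^{\ast }}\left( 0\right) $, reflexivity and strong convexity supply a unique $u\in S_{1}^{X}\left( 0\right) $ with $\left\langle x_{0}^{\ast },u\right\rangle =1$; taking $x=x_{0}+ru\in S_{r}^{X}\left( x_{0}\right) \subseteq G$ and using $f\left( G\right) \subseteq B_{r}^{X}\left( x_{0}\right) $ one computes
\[
\left\langle x_{0}^{\ast },f_{1}\left( x\right) \right\rangle =\left\langle x_{0}^{\ast },x-f\left( x\right) \right\rangle =r-\left\langle x_{0}^{\ast },f\left( x\right) -x_{0}\right\rangle \geq r-\left\Vert f\left( x\right) -x_{0}\right\Vert \geq 0,
\]
and equality in the last two inequalities, by strict convexity, forces $f\left( x\right) -x_{0}=ru$, i.e. $f\left( x\right) =x$; so either $f$ already has a fixed point or this number is strictly positive, which is exactly (\ref{4.1}). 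The hypothesis $f_{1}\left( G\right) \subset int\ B_{r}^{X}\left( x_{0}\right) $ keeps $f_{1}\left( G\right) $ bounded and strictly inside the ball, as the final step of the argument requires (cf. Proposition \ref{P_1}). Since $f_{1}\left( G\right) \subseteq X_{1}$, the vectors $f_{1}\left( x\right) $ lie automatically in $X_{1}$, so the condition holds with $X_{1}$ in the role of the distinguished subspace.

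With (i)--(iii) established, Theorem \ref{Th_9} (through Lemmas \ref{L_4} and \ref{L_6}), or equivalently Corollary \ref{C_4}, yields $0\in f_{1}\left( G\right) $, that is, a fixed point of $f$ in $B_{r}^{X}\left( x_{0}\right) $. I expect the main obstacle to be ingredient (ii): in infinite dimensions a convex set whose affine hull is a dense subspace need not have nonempty topological interior, so the passage to a subspace relative to which $f_{1}\left( G\right) $ has nonempty interior must be carried out with the $C$-interior (algebraic core) notion and the structural facts about (locally convex) vector spaces invoked in the proofs of Theorem \ref{Th_8} and Lemma \ref{L_4}; then separating $0$ from $f_{1}\left( G\right) $ inside $X_{1}$ and extending the separating functional to $X^{\ast }$ by Hahn--Banach so as to contradict the strict form of (\ref{4.1}) is the technical heart of the proof. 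The remaining points --- the equivalence with the fixed-point property, and the acute-angle estimate above, which is essentially the one behind Theorem \ref{Th_1} --- are routine.
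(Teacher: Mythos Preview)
Your proposal is correct and follows the same route as the paper. The paper gives no explicit proof for Corollary~\ref{C_6}, stating only that it (together with Corollary~\ref{C_5}) ``immediately ensues from the above-mentioned results''; your write-up is precisely the verification that the hypotheses of Theorem~\ref{Th_9} (via Lemmas~\ref{L_4}--\ref{L_6}, or equivalently Corollary~\ref{C_4} with $Y=X$) are satisfied by $f_{1}$ on $G=B_{r}^{X}(x_{0})$, including the explicit acute-angle estimate on $S_{r}^{X}(x_{0})$ that the paper leaves implicit.
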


\begin{remark}
\label{R_4} Since the closed convex body is homeomorphic to the closed ball
the cited above corollaries one can transfer to the case of the closed
convex body of the Banach space.
\end{remark}

\section{\label{Sec_4}On solvability of the nonlinear equations and
inclusions}

In the beginning, we will provide the results, which in some sense are
corollaries of results from the above sections, therefore we will lead them
simplified variants.

Let $X$ and $Y$ be $LVTS$, and $f$ be a mapping acting from $X$ to $Y$.

\begin{theorem}
\label{Th_10} Let $y\in Y$ be an element. If there exists such subset $%
G\subseteq D\left( f\right) \subseteq X$ that $f\left( G\right) $ is a
convex subset of $Y$ satisfying the condition i) or ii) of Theorem \ref{Th_7}%
. Then if for $\forall y^{\ast }\in \partial U^{\ast }\subset Y^{\ast }$
there exists such $x\in G$ that fulfills the corresponding inequation 
\begin{equation*}
i)\ \left\langle f\left( x\right) -y,y^{\ast }\right\rangle \cap \left(
0,\infty \right) ;\quad or\quad \left\langle f\left( x\right) -y,y^{\ast
}\right\rangle \cap \left[ 0,\infty \right)
\end{equation*}%
then $y\in f\left( G\right) $, i.e. $\exists x_{1}\in G\Longrightarrow y\in
f\left( x_{1}\right) $ ($f\left( x_{1}\right) =y$).
\end{theorem}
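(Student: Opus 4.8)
The plan is to reduce Theorem \ref{Th_10} to Theorem \ref{Th_7} by a translation in the target space. First I would introduce the auxiliary mapping $f_{1}:D\left( f\right) \subseteq X\longrightarrow Y$ given by $f_{1}\left( x\right) \equiv f\left( x\right) -y$ for every $x\in D\left( f\right) $ (understood setwise when $f$ is multi-valued). Since translation by the fixed vector $y$ is an affine homeomorphism of $Y$ onto itself, the image $f_{1}\left( G\right) =f\left( G\right) -y$ is convex precisely when $f\left( G\right) $ is, has nonempty interior precisely when $f\left( G\right) $ does, and is open (respectively closed) precisely when $f\left( G\right) $ is. Hence whichever of the alternatives i) or ii) of Theorem \ref{Th_7} is assumed for $f\left( G\right) $ is inherited by $f_{1}\left( G\right) $.

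Next I would check that the hypothesis on the dual pairing transfers verbatim. For $y^{\ast }\in \partial U^{\ast }\subset Y^{\ast }$ and $x\in G$ one has $\left\{ \left\langle y^{\ast },z\right\rangle \left\vert \ z\in f_{1}\left( x\right) \right. \right\} =\left\{ \left\langle y^{\ast },y^{\prime }-y\right\rangle \left\vert \ y^{\prime }\in f\left( x\right) \right. \right\} =\left\{ \left\langle f\left( x\right) ,y^{\ast }\right\rangle \right\} -\left\langle y,y^{\ast }\right\rangle $, which is exactly the set denoted $\left\langle f\left( x\right) -y,y^{\ast }\right\rangle $ in the statement. Thus the assumption that $\left\langle f\left( x\right) -y,y^{\ast }\right\rangle $ meets $\left( 0,\infty \right) $ in case i) (respectively $\left[ 0,\infty \right) $ in case ii)) says precisely that the set $\left\{ \left\langle y^{\ast },z\right\rangle \left\vert \ z\in f_{1}\left( x\right) \right. \right\} $ meets $\left( 0,\infty \right) $ (respectively $\left[ 0,\infty \right) $), i.e. the hypothesis of Theorem \ref{Th_7} holds for the pair $\left( f_{1},G\right) $.

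Applying Theorem \ref{Th_7} to $f_{1}$ then yields $0\in f_{1}\left( G\right) $, so there exists $x_{1}\in G$ with $0\in f_{1}\left( x_{1}\right) =f\left( x_{1}\right) -y$, that is $y\in f\left( x_{1}\right) $ (and $f\left( x_{1}\right) =y$ if $f$ is single-valued), which is the claim. I do not expect any genuinely hard step here: the only point worth verifying with a little care is that the set $\partial U^{\ast }$ appearing in Theorem \ref{Th_7} is unaffected by the translation in $Y$, which is immediate because the relevant functionals are evaluated on differences, so the translation contributes only the constant $-\left\langle y,y^{\ast }\right\rangle $ that has already been absorbed into the hypothesis. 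I would also remark that this is the same translation device already used implicitly in the fixed-point corollaries (with $f_{1}=Id-f$), so no new machinery is required; one could equally well state and prove the analogous reduction against Theorem \ref{Th_8} for the purely algebraic setting.
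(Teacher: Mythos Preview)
Your proposal is correct and is exactly the paper's approach: the paper, too, introduces $f_{1}\left( \cdot \right) \equiv f\left( \cdot \right) -y$ on $G$ and observes that all conditions of Theorem~\ref{Th_7} carry over to $f_{1}$, whence $0\in f_{1}\left( G\right) $ gives the claim. You have simply written out in full the verification that the paper dismisses with ``it isn't difficult to see''.
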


For the proof sufficiently to noted that if denote by $f_{1}$the mapping $%
f_{1}\left( \cdot \right) \equiv f\left( \cdot \right) -y$ on the subset $G$
then it isn't difficult to see that for the mapping $f_{1}:G\longrightarrow
Y $ all conditions of Therem \ref{Th_7} are fulfilled, consequently its
claim fulfills also.

\begin{remark}
\label{R_5} We should be noted the condition "for $\forall y^{\ast }\in
\partial U^{\ast }\subset Y^{\ast }$ there exists such $x\in G$ " be some
relation between $\partial U^{\ast }$ and some subset $G_{0}$ of $G$,
therefore one can denote it as a mapping $g$ acting from $X$ to $Y^{\ast }$
such that for each $y^{\ast }\in \partial U^{\ast }\Longrightarrow g\left(
y^{\ast }\right) =x\in G_{0}$ moreover,$\ g\left( \partial U^{\ast }\right)
=G_{0}$, or it inverse $g^{-1}\left( G_{0}\right) =\partial U^{\ast }$.
Since under investigation of the concrete problem it is necessary to seek
the above-mentioned relation unlike of the brought general results. The
existence of such mapping allows obtaining the necessary a priori estimates.
Therefore in what follows, we will lead results according to this remark.
\end{remark}

\begin{corollary}
\label{C_7} Let the mapping $f$ act from Banach space $X$ to the Banach
space $Y$, where (iii) $Y\equiv X^{\ast }$ or (iv) $Y\equiv X\equiv X^{\ast
\ast }$. Then the equation (the inclusion) $f\left( x\right) =y$ ($f\left(
x\right) \ni y$) is solvable for any $y\in Y$ fulfilling conditions:

1) There exists such subset $G\subseteq D\left( f\right) \subseteq X$ that $%
f\left( G\right) $ is a convex subset with nonempty interior in $Y$;

2) For $\forall x\in \partial U^{\ast }\subset Y^{\ast }$ takes place the
expression (iii) $\left\langle f\left( x\right) -y,x\right\rangle \cap
\left( 0,\infty \right) \neq \varnothing $; (iv) $\left\langle f\left(
x\right) -y,J\left( x\right) \right\rangle \cap \left( 0,\infty \right) \neq
\varnothing $, where $J$ is the duality operator: $J:X\longrightarrow
X^{\ast }$.
\end{corollary}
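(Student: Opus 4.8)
The plan is to reduce Corollary~\ref{C_7} directly to Theorem~\ref{Th_10} by choosing, in each of the two cases (iii) and (iv), an appropriate auxiliary mapping that transplants the hypotheses onto the form required there. First I would fix $y\in Y$ satisfying conditions 1) and 2) and set $f_{1}(\cdot)\equiv f(\cdot)-y$ on the subset $G$. Condition 1) says precisely that $f_{1}(G)=f(G)-y$ is a convex subset of $Y$ with nonempty interior, so hypothesis i) of Theorem~\ref{Th_7} (equivalently the hypothesis invoked in Theorem~\ref{Th_10}) holds for $f_{1}$ on $G$. It then remains to check that the pairing condition 2) supplies, for every $y^{\ast}\in\partial U^{\ast}\subset Y^{\ast}$, an $x\in G$ with $\left\langle f_{1}(x),y^{\ast}\right\rangle\in(0,\infty)$ for some element of $f_{1}(x)$; once that is done Theorem~\ref{Th_10} yields $0\in f_{1}(G)$, i.e. $y\in f(G)$, which is exactly solvability of $f(x)=y$ (or the inclusion $f(x)\ni y$ in the multivalued case).

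In case (iii) we have $Y\equiv X^{\ast}$, so $Y^{\ast}\equiv X^{\ast\ast}\equiv X$ by reflexivity, and a functional $y^{\ast}\in\partial U^{\ast}\subset Y^{\ast}$ is literally an element $x\in X$; the dual form $\left\langle\cdot,\cdot\right\rangle$ for the pair $(Y,Y^{\ast})=(X^{\ast},X)$ is the canonical one. Hence the expression in 2)(iii), $\left\langle f(x)-y,x\right\rangle\cap(0,\infty)\neq\varnothing$, is verbatim the inequation demanded by Theorem~\ref{Th_10} applied to $f_{1}$, with the role of the test functional $y^{\ast}$ played by $x$ itself. So in this case the verification is immediate and the conclusion follows.

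In case (iv) we have $Y\equiv X\equiv X^{\ast\ast}$, and the natural test functionals live in $X^{\ast}$; the duality operator $J:X\longrightarrow X^{\ast}$ provides the bridge. Given $x\in X$ (ranging over $\partial U^{\ast}$, suitably identified), $J(x)\in X^{\ast}$ is a functional with $\left\langle x,J(x)\right\rangle=\|x\|_{X}^{2}$, and since $X$ is strongly convex together with its dual, $J$ is a norm-preserving bijection carrying $\partial U^{\ast}$ onto the boundary of the corresponding absorbing set in $X^{\ast}$. Therefore the condition 2)(iv), $\left\langle f(x)-y,J(x)\right\rangle\cap(0,\infty)\neq\varnothing$, says exactly that for the test functional $J(x)$ ranging over $\partial U^{\ast}\subset X^{\ast}=Y^{\ast}$ there is a point $x\in G$ with $\left\langle f_{1}(x),J(x)\right\rangle>0$ for some element of the image; this is the hypothesis of Theorem~\ref{Th_10} for $f_{1}$, whence $0\in f_{1}(G)$, i.e. $y\in f(G)$.

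The main obstacle, and the only point that needs care, is the bookkeeping of the canonical identifications and the verification that $J$ restricts to a bijection $\partial U^{\ast}\to\partial U^{\ast}$ (or between the two chosen absorbing sets) so that ``for all $y^{\ast}\in\partial U^{\ast}$'' in Theorem~\ref{Th_10} matches ``for all $x\in\partial U^{\ast}$'' here; this uses the strong convexity hypotheses standing throughout Section~\ref{Sec_3}, which guarantee $J$ is single-valued, norm-preserving and surjective. Everything else is a direct substitution into Theorem~\ref{Th_10}, so the proof is short: state $f_{1}=f-y$, invoke condition 1) to get the convexity-with-interior hypothesis, read condition 2) as the required pairing inequality under the identification $Y^{\ast}\cong X$ (case (iii)) or via $J$ (case (iv)), and apply Theorem~\ref{Th_10}.
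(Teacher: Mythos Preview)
Your proposal is correct and follows exactly the route the paper intends: the paper gives no separate proof of Corollary~\ref{C_7} but places it immediately after Theorem~\ref{Th_10}, whose proof already consists of setting $f_{1}\equiv f-y$ and invoking Theorem~\ref{Th_7}; your argument just spells out the identifications $Y^{\ast}\cong X$ in case~(iii) and the use of the duality map $J$ in case~(iv) needed to read condition~2) as the pairing hypothesis of Theorem~\ref{Th_10}. The only subtlety you flag---that $J$ must carry the boundary of the absorbing set bijectively so that quantifying over $x$ matches quantifying over $y^{\ast}$---is indeed the sole point requiring the standing strong-convexity assumption, and the paper treats it implicitly.
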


Now we will prove certain results that generalize the well-known theorems
from articles [15, 32, 40], etc. in the sense that the examined mapping
isn't continuous or completely continuous and the subset on which this
mapping satisfies the needed condition may be arbitrary.

So, let $Y$ be a semi-reflexive $LVTS$ (see, e.g. [5]), $S$ be a weakly
complete "reflexive" $pn-$space (see, [32, 44] and references in these), $X$
be a separable $VTS$, moreover $X\subset S$ and is everywhere dense in $S$
(or $Y$ and $S$ be semi-reflexive $LVTS$ ), $X_{m}$ be an $m-$dimension
linear subspace of $X$, generated over first $m$ elements from totl systems
of $X$.

3) Let $f:S\longrightarrow Y$ be a bounded \footnote{%
Let $X$ and $Y$ be $LVTS$ and $f$ be a mapping that acts from $X$ to $Y$. A
mapping $f$ be called bounded if the image of each bounded subset of $X$ is
a bounded subset of $Y$.} and weakly closed mapping.\footnote{%
Let $X$ and $Y$ be $LVTS$ and $f$ be a mapping that acts from $X$ to $Y$. A
mapping be called weakly closed if a sequence $\left\{ x_{\alpha }\right\} $
from $D\left( f\right) $ weakly converges to $x\in D\left( f\right) $ and
sequence $\left\{ f\left( x_{\alpha }\right) \right\} $ converges to $y\in Y$
then $y=f\left( x\right) $.} Let $G\subset X$ be such neighborhood of zero
the space $X$ that for each $m=1,2,...$ the set $G\cap X_{m}=G_{m}$ is
closed neighborhood of zero the space $X$ and $f\left( G_{m}\right) $ is
closed convex subset in $Y$.

\begin{theorem}
\label{Th_11}Let the condition 3) is fulfilled and $A:X\longrightarrow
Y^{\ast }$ be a linear continuous operator. Then each $y\in Y$ belongs to
the subset $f\left( G\right) -\ker A^{\ast }$ in other words $y+z\in f\left(
G\right) $ i.e. there exist $x_{0}\in G$ such that $f\left( x_{0}\right)
=y+z $ if operator $A:X\longrightarrow Y^{\ast }$ such that the inequation 
\begin{equation}
\left\langle f\left( x\right) -y,Ax\right\rangle \geq 0,\quad x\in \partial
G_{m},\text{ }m=1,2,...\text{\ }  \label{4.2}
\end{equation}%
holds for each $m$, where $z\in \ker A^{\ast }\subset Y$. \ 
\end{theorem}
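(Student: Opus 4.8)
The plan is a Galerkin--type argument: solve the problem on each finite--dimensional subspace $X_m$ by the convexity--based variant of the acute--angle lemma obtained above, and then pass to the limit using the boundedness and weak closedness of $f$, the semi--reflexivity of $Y$, and the weak sequential compactness of bounded sets in $S$.

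\emph{Finite--dimensional step.} Fix $m$, choose a basis $e_1,\dots,e_m$ of $X_m$, let $L\colon Y\longrightarrow R^{m}$ be the continuous affine map $v\mapsto\left(\langle v-y,Ae_1\rangle,\dots,\langle v-y,Ae_m\rangle\right)$ (affine because $A$ is linear continuous), and put $\Phi_m:=L\circ f$ on $G_m$. Since $f$ is bounded, $f(G_m)$ is a bounded closed convex subset of the semi--reflexive space $Y$, hence weakly compact, so $\Phi_m(G_m)=L(f(G_m))$ is a compact convex subset of $R^{m}$. Writing $x=\sum_i c_ie_i$ with $c=(c_1,\dots,c_m)$ one has $\langle f(x)-y,Ax\rangle=\langle c,\Phi_m(x)\rangle_{R^{m}}$, so hypothesis (\ref{4.2}) says precisely that $\langle c,\Phi_m(x)\rangle\ge 0$ for every $x\in\partial G_m$. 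As $G_m$ is a bounded convex closed neighborhood of $0$ in $X_m$, it is homeomorphic to a ball, and Lemma~\ref{L_1} --- in its closed--convex form, where the separation argument in its proof shows the non--strict inequality in (\ref{4.2}) to be enough --- applies to $\Phi_m$ and yields $0\in\Phi_m(G_m)$. Hence there is $x_m\in G_m\subseteq G$ with
\begin{equation*}
\langle f(x_m)-y,Ax\rangle=0\qquad\text{for all }x\in X_m.
\end{equation*}

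\emph{Passage to the limit.} The family $\{x_m\}$ lies in $G$; by the weak sequential compactness of bounded sets in the weakly complete reflexive $pn$--space $S$ we extract a subsequence $x_{m_k}\rightharpoonup x_0$ in $S$, and, since $f$ is bounded and $Y$ is semi--reflexive, a further subsequence gives $f(x_{m_k})\rightharpoonup\eta$ in $Y$; weak closedness of $f$ then forces $\eta=f(x_0)$, and $x_0\in G$ because $G$ is a closed convex neighborhood of $0$, hence weakly closed. Now fix $\ell$ and $x\in X_\ell$: for $m_k\ge\ell$ we have $x\in X_{m_k}$, so $\langle f(x_{m_k})-y,Ax\rangle=0$, and letting $k\to\infty$ and testing the weak convergence against the fixed functional $Ax\in Y^{\ast}$ gives $\langle f(x_0)-y,Ax\rangle=0$. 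This holds for every $x\in\bigcup_m X_m$, and therefore, by continuity of $A$ and density of $\bigcup_m X_m$ in $X$, for every $x\in X$; equivalently $A^{\ast}(f(x_0)-y)=0$ in $X^{\ast}$, i.e. $z:=f(x_0)-y$ lies in $\ker A^{\ast}\subseteq Y=Y^{\ast\ast}$. Then $f(x_0)=y+z$ with $x_0\in G$, so $y+z\in f(G)$ and $y=f(x_0)-z\in f(G)-\ker A^{\ast}$, which is the assertion.

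\emph{Main obstacle.} The delicate part is the limiting step, not the finite--dimensional one. One must guarantee that the Galerkin solutions $\{x_m\}$ form a weakly relatively compact family in $S$ whose weak limit remains inside $G$ (this is exactly where convexity and closedness of $G$ are used, and where in concrete problems an a priori estimate of the type alluded to in Remark~\ref{R_5} enters), and one needs compatibility between weak convergence in $S$, weak convergence in $Y$ and the weak--closedness hypothesis so that $f(x_{m_k})\rightharpoonup f(x_0)$. By contrast, the finite--dimensional step is routine once one observes that (\ref{4.2}) is precisely the acute--angle hypothesis for $\Phi_m$ and that the convexity--based Lemma~\ref{L_1} removes any need for continuity of $f$.
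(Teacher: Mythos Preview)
Your argument is correct and follows precisely the route the paper indicates: a Galerkin scheme in which the finite-dimensional step is handled by the convexity-based generalization of the acute-angle lemma from Section~\ref{Sec_1} (your compactness argument for $\Phi_m(G_m)=L(f(G_m))$ is the right way to transfer the closed-convexity of $f(G_m)\subset Y$ to $R^{m}$, and your observation that the closed-convex case of Lemma~\ref{L_1}/Theorem~\ref{Th_7}(ii) tolerates the non-strict inequality in (\ref{4.2}) is exactly what is needed), followed by the standard weak-limit passage using boundedness and weak closedness of $f$. This is essentially the paper's own proof, only with the details written out; the one tacit assumption you make---that each $G_m$ is bounded (so that $f(G_m)$ is bounded, hence weakly compact)---is implicit in the paper's setup as well.
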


For the proof is sufficiently noted the proof is led applying the Galerkin
method, which usually is applied in such type cases (see, [15, 40, 44],
etc), however here instead of "acute-angle lemma" is used its generalization
proved in Section \ref{Sec_1}.

\begin{remark}
\label{R_6} The weakly closedness of the mapping in the multivalued case be
understood as: if a sequence $\left\{ x_{\alpha }\right\} $ from $D\left(
f\right) $ weakly converges to $x\in D\left( f\right) $ and correspondent
sequence $\left\{ f\left( x_{\alpha }\right) \right\} $ weakly converges to
the subset $\yen \subset Y$ then $\yen =f\left( x\right) $. Consequently,
the claim of Theorem \ref{Th_11} in this case will be as: each $y\in Y$
belongs to the subset $f\left( G\right) -\ker A^{\ast }$ in other words $%
y+z\in f\left( G\right) $ i.e. there exist $x_{0}\in G$ such that $y+z$ $\in
f\left( x_{0}\right) $ if operator $A:X\longrightarrow Y^{\ast }$ such that
the inequation (\ref{4.2}) holds for each $m$, where $z\in \ker A^{\ast
}\subset Y$.
\end{remark}

\ Now consider equations, and also inclusions in Banach spaces. Let spaces $%
Y $, $S$ be such as above, $X$ be a reflexive separable Banach space
everywhere dense in $S$ and $f:S\longrightarrow Y$ be a bounded mapping.
Consider the following conditions.

c) Let there exists such $r_{0}>0$ that for any closed ball $B_{r}^{X}\left(
0\right) \subset X$ ($r>r_{0}>0$) there exists such neighborhood$\ G_{r}$ of 
$0\in S$ that $B_{r}^{X}\left( 0\right) \subseteq G_{r}$, $G_{r}\cap
B_{r_{1}}^{X}\left( 0\right) \subseteq B_{r_{1}}^{X}\left( 0\right) $ and $%
f\left( G_{r}\right) $ be open (or closed) convex set in $Y$, where $%
r_{1}\left( r\right) \geq r$ ($r_{1}$ dependent only on $r$);

d) There exists such linear bounded operator $A:X\longrightarrow Y^{\ast }$
that the following expression fulfills 
\begin{equation*}
\left\langle f\left( x\right) ,Ax\right\rangle \geq \varphi \left( \left[ x%
\right] _{S}\right) \ \left[ x\right] _{S}\text{ \ }\&\text{ }\varphi \left( %
\left[ x\right] _{S}\right) \nearrow \infty \text{\ \ at \ }\left[ x\right]
_{S}\nearrow \infty
\end{equation*}%
where $\varphi :R_{+}^{1}\longrightarrow $ $R^{1}$ be a continuous function
and $\left[ x\right] _{S}$ be $p-$norm in $S$;

e) There exists such nonlinear operator $g:X\subseteq S\longrightarrow
Y^{\ast }$ that $\frac{g\left( x\right) }{\left\Vert g\left( x\right)
\right\Vert _{Y^{\ast }}}=\widehat{g}\left( x\right) $ saisfies the
condition $\widehat{g}\left( X\right) =S_{1}^{Y^{\ast }}\left( 0\right) $
and on $X$ fulfills the expression 
\begin{equation*}
\left\langle f\left( x\right) ,\widehat{g}\left( x\right) \right\rangle \geq
\varphi \left( \left[ x\right] _{S}\right) \ \left[ x\right] _{S}\text{ \ }\&%
\text{ }\varphi \left( \left[ x\right] _{S}\right) \nearrow \infty \text{\ \
at \ }\left[ x\right] _{S}\nearrow \infty ,
\end{equation*}%
where $\varphi $ and $\left[ x\right] _{S}$ are same as in the condition d).

(In the case when $f:X\longrightarrow Y$ in the above expression instead $p-$%
norm needs to take $\left\Vert x\right\Vert _{X}$.)

\begin{theorem}
\label{Th_12} Let conditions c) and d) fulfill. Then for any $y\in Y$
satisying condition 
\begin{equation}
\sup \left\{ \frac{\left\langle y,Ax\right\rangle }{\left[ x\right] _{S}}%
\left\vert \ x\in X\right. \right\} <\infty  \label{4.3}
\end{equation}%
there exist such $x_{0}$ and $y_{0}\in \ker A^{\ast }\cap Y$ that $f\left(
x_{0}\right) =y+y_{0}$.
\end{theorem}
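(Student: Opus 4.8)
The plan is to reduce the claim to Theorem \ref{Th_11} by performing a Galerkin-type exhaustion and harvesting the a priori bound from the coercivity condition d). First I would fix $y\in Y$ satisfying \eqref{4.3}, and set $M:=\sup\{\langle y,Ax\rangle/[x]_S\mid x\in X\}<\infty$. For each $m$ consider the finite-dimensional subspace $X_m$ and the finite-dimensional Galerkin problem attached to $f_1(\cdot)=f(\cdot)-y$. The key estimate is obtained by pairing with $Ax$: for $x$ with $[x]_S$ large one has
\begin{equation*}
\langle f(x)-y,Ax\rangle \ \ge\ \varphi\!\left([x]_S\right)[x]_S - M\,[x]_S\ =\ \left(\varphi\!\left([x]_S\right)-M\right)[x]_S,
\end{equation*}
and since $\varphi([x]_S)\nearrow\infty$, there is an $r_0$ (depending on $y$ through $M$) such that the right-hand side is $\ge 0$ whenever $[x]_S=r\ge r_0$. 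Thus condition c) supplies, for each such $r$, a neighbourhood $G_r$ of $0$ in $S$ with $B_r^X(0)\subseteq G_r$, $G_r\cap B_{r_1}^X(0)\subseteq B_{r_1}^X(0)$, and $f(G_r)$ open (or closed) convex; on the boundary $\partial(G_r\cap X_m)$ the inequality \eqref{4.2} of Theorem \ref{Th_11} holds for $f_1$ because there $[x]_S\ge r\ge r_0$.

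Next I would apply Theorem \ref{Th_11} to the mapping $f_1=f-y$ with this $G=G_r$: it yields, for each admissible $r$, an element $x_r\in G_r$ and a $z_r\in\ker A^\ast\subset Y$ with $f(x_r)=y+z_r$, i.e. $f_1(x_r)=z_r\in\ker A^\ast$. Letting $r\to\infty$ along a sequence gives a family $\{x_r\}$ which, by condition c) together with d), is bounded in the $p$-norm of $S$ (the coercivity forces $[x_r]_S$ to stay below the level where $\varphi-M$ turns positive, so in fact $x_r$ lies in a fixed ball), hence bounded in the reflexive Banach space $X$; passing to a weakly convergent subsequence $x_r\rightharpoonup x_0$ and using that $f$ is bounded and weakly closed, together with weak closedness of the linear subspace $\ker A^\ast$, I would extract the limit $f(x_0)=y+y_0$ with $y_0\in\ker A^\ast\cap Y$. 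This is exactly the asserted conclusion.

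The main obstacle I anticipate is the passage to the limit: one must ensure that the Galerkin elements $x_r$ produced by Theorem \ref{Th_11} are uniformly bounded independently of the exhaustion parameter, which requires carefully combining the nesting property $G_r\cap B_{r_1}^X(0)\subseteq B_{r_1}^X(0)$ from c) with the coercive lower bound from d) so that the solutions cannot escape to infinity in $[\,\cdot\,]_S$; and then that weak closedness of $f$ (in the single-valued sense, or in the multivalued sense of Remark \ref{R_6}) genuinely identifies the weak limit of $f(x_r)=y+z_r$ as $f(x_0)$, while $z_r\rightharpoonup y_0$ stays in the (weakly closed) subspace $\ker A^\ast$. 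The convexity hypotheses in c) are precisely what is needed so that the hypotheses of Theorem \ref{Th_7}/\ref{Th_11} are met at each finite stage, so no separate argument for those is required beyond quoting the earlier results.
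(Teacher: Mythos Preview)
Your route through Theorem~\ref{Th_11} and a subsequent limit in $r$ is not the paper's argument, and it introduces hypotheses that Theorem~\ref{Th_12} does not grant. The paper's proof is direct: from condition~d) and~\eqref{4.3} one finds a radius $r_0$ (and then $r_{01}\ge r_0$ via c)) on whose sphere $\langle f(x)-y,Ax\rangle\ge 0$; condition~c) already guarantees that $f(G_{r_0})$ is an open (or closed) convex set in $Y$, so Theorem~\ref{Th_10} (in the form indicated by Remark~\ref{R_5}, testing only against functionals of the form $Ax$, whence the defect $y_0\in\ker A^\ast$) applies immediately. There is no Galerkin step and no passage to a limit.

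Your plan has two concrete gaps. First, Theorem~\ref{Th_11} rests on condition~3), which demands both that $f$ be weakly closed and that \emph{each} finite-dimensional trace $f(G_m)=f(G\cap X_m)$ be closed convex. Neither is assumed in Theorem~\ref{Th_12}: the preamble to conditions c)--e) only takes $f$ bounded, and condition~c) gives convexity of $f(G_r)$ in $Y$, not of its intersections with the $X_m$-slices. So you cannot invoke Theorem~\ref{Th_11} as stated. Second, even granting those hypotheses, once Theorem~\ref{Th_11} produces an $x_r\in G_r$ with $f(x_r)=y+z_r$, $z_r\in\ker A^\ast$, the proof is finished for that single $r$; sending $r\to\infty$ and extracting a weak limit (which again would need weak closedness of $f$) is superfluous. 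The coercivity in d) is used only once, to locate a sphere on which the acute-angle inequality holds, not to control a family of approximate solutions.
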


\begin{proof}
For the proof is sufficient to note that according to condition d) there
exists ball $B_{r_{0}}^{X}\left( 0\right) $ such that on $%
S_{r_{0}}^{X}\left( 0\right) $ fulfills inequation $\left\vert \left\langle
y,Ax\right\rangle \right\vert \leq \varphi \left( \left[ x\right]
_{S}\right) \left[ x\right] _{S}$ due to (\ref{4.3}). Therefore one can use
the condition c). According to condition c) there exist such neighborhood $%
G_{r_{0}}$ of $0$ and ball $B_{r_{01}}^{X}\left( 0\right) $\ that on $%
S_{r_{01}}^{X}\left( 0\right) $ fulfills inequation $\left\langle f\left(
x\right) -y,Ax\right\rangle \geq 0$. Consequently, conditions of Theorem \ref%
{Th_10} satisfy then its claim satisfies also.
\end{proof}

\begin{theorem}
\label{Th_13} Let conditions c) and e) fulfill. Then for any $y\in Y$
satisying condition 
\begin{equation}
\sup \left\{ \frac{\left\langle y,\widehat{g}\left( x\right) \right\rangle }{%
\left[ x\right] _{S}}\left\vert \ x\in X\right. \right\} <\infty  \label{4.4}
\end{equation}%
there exists such $x_{0}$ that $f\left( x_{0}\right) =y$.
\end{theorem}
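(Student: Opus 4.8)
The plan is to reduce the assertion to Theorem \ref{Th_10} applied to the shifted mapping $f_{1}\left( \cdot \right) \equiv f\left( \cdot \right) -y$, exactly in the spirit of the proof of Theorem \ref{Th_12}; the only structural difference is that the surjectivity clause $\widehat{g}\left( X\right) =S_{1}^{Y^{\ast }}\left( 0\right) $ in condition e) lets us realise \emph{every} functional of $\partial U^{\ast }$ as a value of $\widehat{g}$, so that no correction term lying in a kernel is needed and one obtains the exact solvability $f\left( x_{0}\right) =y$ rather than $f\left( x_{0}\right) =y+y_{0}$ as in Theorem \ref{Th_11} and Theorem \ref{Th_12}.

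First I would normalise $U^{\ast }=B_{1}^{Y^{\ast }}\left( 0\right) $, so that $\partial U^{\ast }=S_{1}^{Y^{\ast }}\left( 0\right) $. By (\ref{4.4}) there is a finite constant $C$ with $\left\langle y,\widehat{g}\left( x\right) \right\rangle \leq C\left[ x\right] _{S}$ for all $x\in X$. Since $\varphi \left( \left[ x\right] _{S}\right) \nearrow \infty $ as $\left[ x\right] _{S}\nearrow \infty $, one picks $\rho _{0}>r_{0}$ with $\varphi \left( t\right) \geq C$ for all $t\geq \rho _{0}$. Then for every $x\in X$ with $\left[ x\right] _{S}\geq \rho _{0}$, condition e) gives
\begin{equation*}
\left\langle f\left( x\right) -y,\widehat{g}\left( x\right) \right\rangle =\left\langle f\left( x\right) ,\widehat{g}\left( x\right) \right\rangle -\left\langle y,\widehat{g}\left( x\right) \right\rangle \geq \left( \varphi \left( \left[ x\right] _{S}\right) -C\right) \left[ x\right] _{S}\geq 0 .
\end{equation*}
Next I would invoke condition c) to choose a radius $r>\rho _{0}$ and a neighborhood $G_{r}$ of $0\in S$ with $B_{r}^{X}\left( 0\right) \subseteq G_{r}$ and $f\left( G_{r}\right) $ an open (or closed) convex subset of $Y$, so that $f\left( G_{r}\right) $ falls under case ii) of Theorem \ref{Th_7}. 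Given an arbitrary $y^{\ast }\in S_{1}^{Y^{\ast }}\left( 0\right) =\partial U^{\ast }$, the identity $\widehat{g}\left( X\right) =S_{1}^{Y^{\ast }}\left( 0\right) $ yields $x=x\left( y^{\ast }\right) \in X$ with $\widehat{g}\left( x\right) =y^{\ast }$; using the positive $0$-homogeneity of $\widehat{g}$ inherent in condition e) together with the freedom to enlarge $r$, this preimage can be taken inside $B_{r}^{X}\left( 0\right) \subseteq G_{r}$ while keeping $\left[ x\right] _{S}\geq \rho _{0}$. For such an $x$ the displayed inequality shows $\left\{ \left\langle f\left( x\right) -y,y^{\ast }\right\rangle \right\} \cap \left[ 0,\infty \right) \neq \varnothing $. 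Hence, with $G=G_{r}$, all hypotheses of Theorem \ref{Th_10} in its case ii) hold for $f$ and $y$, and its conclusion gives $y\in f\left( G_{r}\right) $, i.e. there is $x_{0}\in G_{r}\subseteq S$ with $f\left( x_{0}\right) =y$.

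The hard part will be precisely this localisation step: one must guarantee that for \emph{every} unit functional $y^{\ast }$ the chosen $\widehat{g}$-preimage can be placed simultaneously inside the neighborhood $G_{r}$ and above the coercivity threshold $\left[ x\right] _{S}\geq \rho _{0}$ on which the inequality of the second paragraph is valid; this is where the surjectivity together with the scaling structure of $\widehat{g}$ in condition e) is genuinely used, and it is the analogue of the interplay between conditions c) and d) exploited in the proof of Theorem \ref{Th_12}. Everything else — that $f\left( G_{r}\right) $ being open/closed convex places us in case ii) of Theorem \ref{Th_7}, that $x_{0}$ lies in $D\left( f\right) =S$, and the passage through $f_{1}=f-y$ — is the routine bookkeeping already performed in Theorem \ref{Th_10}.
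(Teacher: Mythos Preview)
Your proposal is correct and follows essentially the same route as the paper, whose entire proof of Theorem \ref{Th_13} is the single sentence ``The proof of this theorem is analogous to the above-provided proof'' (i.e., to Theorem \ref{Th_12}): coercivity from e) forces $\left\langle f\left( x\right) -y,\widehat{g}\left( x\right) \right\rangle \geq 0$ for large $\left[ x\right] _{S}$, condition c) supplies the open/closed convex image, and Theorem \ref{Th_10} closes the argument. Your write-up is actually more detailed than the paper's, and you correctly isolate why the surjectivity $\widehat{g}\left( X\right) =S_{1}^{Y^{\ast }}\left( 0\right) $ removes the $\ker A^{\ast }$ correction of Theorem \ref{Th_12}; the one ingredient you invoke that is not literally stated in condition e) --- the positive $0$-homogeneity of $\widehat{g}$ used in the localisation step --- is a point the paper's one-line proof leaves equally implicit.
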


The proof of this theorem is analogous to the above-provided proof.

\begin{remark}
\label{R_7} We should be noted one can prove theorems of the previous
theorems type in the case when $Y$, $X$ be Banach spaces moreover, $Y$ is
reflexive space and $f:X\longrightarrow Y$ be a bounded mapping, as these
are led by the analogous way, therefore we do not to adduce their here.
\end{remark}

We provide now one result for the equation with the odd operator.

\begin{theorem}
\label{Th_14} Let $f$ acts from reflexive Banach space $X$ to its dual space 
$X^{\ast }$ and is the single-value odd operator. Assume there exists such
closed balanced convex neighborhood $G\subset X$ that $f\left( G\right) $ is
a convex closed subset of $X^{\ast }$. Then there exist such subset $%
G_{1}\subseteq G$ and a subspace $X_{0}^{\ast }\subseteq $\ $X^{\ast }$ that
for each $x^{\ast }\in X_{0}^{\ast }$ satisfying of inequation $\left\Vert
x^{\ast }\right\Vert _{X^{\ast }}\leq \left\Vert f\left( x\right)
\right\Vert _{X^{\ast }}$, $\forall x\in G_{1}$ the equation $f\left(
x\right) =x^{\ast }$ is solvable in $G$.
\end{theorem}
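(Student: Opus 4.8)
The plan is to reduce Theorem \ref{Th_14} to Theorem \ref{Th_9} (the main result of Section \ref{Sec_3}) applied to the translated mappings $f_y(x) \equiv f(x) - x^*$. First I would exploit the oddness of $f$. Since $G$ is a balanced convex neighborhood and $f$ is odd, $f(G)$ is a \emph{symmetric} convex closed subset of $X^*$, so $0 \in f(G)$ automatically (it is the midpoint of $f(x)$ and $f(-x) = -f(x)$ for any $x \in G$), and in fact $0$ lies in the relative interior of $f(G)$ along every line through two antipodal image points. Hence the affine span $X^*_{f(G)}$ of $f(G)$ is genuinely a \emph{linear subspace} of $X^*$; call it $X_0^*$. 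This is exactly the structural hypothesis of Theorem \ref{Th_9} (the alternative ``$0 \in X^*_{f(G)}$''). Next, since $X$ is strongly convex reflexive (by the standing assumption of Section \ref{Sec_3}, after renorming), I would pick $X_1 \subseteq X$ to be the pre-annihilator arrangement making $X_0^* \subseteq X_1^*$ with $\operatorname{codim}_{X_1} X_0 \ge 0$, so that the pairing in \eqref{4.1} is non-degenerate on $X_1$.

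The second step is to identify the subset $G_1$ and verify the sign condition \eqref{4.1} for $f_y$ on $G_1$ when $\|x^*\|_{X^*} \le \|f(x)\|_{X^*}$ for all $x \in G_1$. Here I would take $G_1$ to be (roughly) the preimage under $f$ of a convex neighborhood of $0$ inside $f(G)$ — concretely $G_1 = f^{-1}(f(G) \cap X_0^*) \cap G$ or, following Proposition \ref{Pr_1} and the relative-interior argument used in Lemma \ref{L_4}, the portion of $G$ whose image captures a full linear subspace. Given a fixed $x^* \in X_0^*$ with $\|x^*\|_{X^*} \le \|f(x)\|_{X^*}$ for every $x \in G_1$, I must produce, for each $x_0 \in S_1^X(0) \cap X_1$, some $x \in G_1$ with $\langle f(x) - x^*, x_0 \rangle > 0$. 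The oddness is again the engine: for the direction $x_0$, at least one of $x$ with $f(x)$ ``pointing positively along $x_0$'' is available because $f(G_1)$ is symmetric and spans $X_0^*$; then the inequality $\|x^*\| \le \|f(x)\|$ together with the strong convexity of $X^*$ (which makes the duality/support pairing strictly monotone) forces $\langle f(x), x_0 \rangle$ to strictly dominate $\langle x^*, x_0 \rangle$ for a suitably chosen such $x$. This is the argument pattern of Lemma \ref{L_4} and Lemma \ref{L_6}, which I would cite rather than repeat.

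With both pieces in place, Theorem \ref{Th_9} applied to $f_y = f(\cdot) - x^*$ on $G_1$ yields $0 \in f_y(G_1)$, i.e. there is $\widehat x \in G_1 \subseteq G$ with $f(\widehat x) = x^*$; this is the claimed solvability in $G$.

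The main obstacle I anticipate is the verification of the sign condition \eqref{4.1} uniformly over all directions $x_0 \in S_1^X(0) \cap X_1$: oddness gives symmetry of the image and hence ``enough'' directions in which $\langle f(x), x_0\rangle$ has either sign, but turning the norm inequality $\|x^*\|_{X^*} \le \|f(x)\|_{X^*}$ into the \emph{strict} inequality $\langle f(x) - x^*, x_0\rangle > 0$ for a correctly matched pair $(x, x_0)$ genuinely uses strong convexity of $X^*$ and a careful choice of $x$ realizing near-maximal pairing with $x_0$ — this is where the geometry of reflexive strongly convex spaces, rather than soft convexity alone, is indispensable. A secondary technical point is making precise the passage from ``$f(G)$ convex closed and symmetric'' to ``$X^*_{f(G)}$ is a closed linear subspace on which $f(G)\cap X_0^*$ has nonempty relative interior,'' which I would handle exactly as in the proof of Lemma \ref{L_4} via Lemma \ref{L_5} and Proposition \ref{P_1}.
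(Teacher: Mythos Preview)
Your opening moves are correct and coincide with the paper: oddness of $f$ on the balanced set $G$ makes $f(G)$ symmetric, hence $0\in f(G)$ and the affine hull $X_0^{\ast}$ of $f(G)$ is a genuine linear subspace (so one may assume $X_0^{\ast}=X^{\ast}$). From there, however, the paper takes a different route that dissolves precisely the obstacle you flag.

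The paper does not invoke Theorem~\ref{Th_9} and try to check \eqref{4.1} for an arbitrary direction $x_0\in S_1^X(0)$. Instead it chooses $G_1\subseteq G$ so that $f(G_1)=\partial f(G)$ (the preimage of the \emph{boundary} of the image, not of an interior neighborhood of $0$), and then brings in the duality map $J:X\to X^{\ast}$ to set $G_0=J^{-1}(\partial f(G))$. Because $f(G)$ is a closed symmetric convex body, $\partial f(G)$ bounds an absorbing set in $X^{\ast}$, and hence $G_0$ bounds an absorbing neighborhood of $0$ in $X$. The ``test directions'' are now $f_0(x)=J^{-1}(f(x))$ for $x\in G_1$, and the duality identity
\[
\bigl\langle f(x),\,J^{-1}f(x)\bigr\rangle=\|f(x)\|_{X^{\ast}}\,\|J^{-1}f(x)\|_{X}
\]
makes the required sign condition immediate: if $\|x^{\ast}\|_{X^{\ast}}\le\|f(x)\|_{X^{\ast}}$ for all $x\in G_1$, then $\langle x^{\ast},J^{-1}f(x)\rangle\le\|x^{\ast}\|\,\|J^{-1}f(x)\|\le\langle f(x),J^{-1}f(x)\rangle$, so $\langle f(x)-x^{\ast},J^{-1}f(x)\rangle\ge 0$ for every $x\in G_1$. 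Theorem~\ref{Th_10} then gives $x^{\ast}\in f(G)$.

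This is exactly the step you identify as the main obstacle, and your plan for it has a genuine gap. Your choice $G_1=f^{-1}(\text{neighborhood of }0)$ is the wrong one: it forces $\inf_{x\in G_1}\|f(x)\|$ to be small, so the hypothesis $\|x^{\ast}\|\le\|f(x)\|$ for all $x\in G_1$ collapses to $x^{\ast}$ near $0$ and the conclusion becomes trivial. More importantly, your proposed mechanism (pick $x$ ``realizing near-maximal pairing with $x_0$'' and invoke strong convexity) is not supplied with an actual argument, and citing Lemmas~\ref{L_4}--\ref{L_6} does not help, since those lemmas nowhere convert a norm inequality into a directional one. The missing idea is to let the duality map \emph{choose} the direction for you --- pair each $x\in G_1$ with $J^{-1}f(x)$ --- rather than fix $x_0$ first and search for $x$ afterwards.
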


\begin{proof}
Due to the condition of theorem $0\in f\left( G\right) $. Then according to
the convexity of the $f\left( G\right) $ the affine space $X_{0}^{\ast }$
generated over $f\left( G\right) $ is a subspace of\ $X^{\ast }$, and $%
f\left( G\right) $ is the convex closed body in the subspace $X_{0}^{\ast }$%
. Whence follows that without loss of generality one can suppose that $%
X_{0}^{\ast }\equiv X^{\ast }$. Since $X$ and $X^{\ast }$ are the reflexive
Banach spaces, one can assume that these are the strongly convex spaces, and
the dual mapping $J$: $X\underset{J^{-1}}{\overset{J}{\longleftarrow
\longrightarrow }}X^{\ast }$ is the monotone biection (one-to-one, see, e.g.
[31, 40]).

As $f\left( G\right) $ is the closed convex set there exists a subset $G_{1}$
of $G$ for which takes place the relations $f\left( G_{1}\right) =\partial
f\left( G\right) $, $f^{-1}\left( \partial f\left( G\right) \right)
\supseteq G_{1}$. Another hand using the dual mapping $J$ one can determine
a subset $G_{0}=J^{-1}\left( \partial f\left( G\right) \right) $. Then there
exists such one-to-one mapping $f_{0}$ acting in $X$ that $%
f_{0}:G_{1}\longleftarrow \longrightarrow G_{0}$ and for each $x\in G_{1}$
fulfill the equation 
\begin{equation*}
\left\langle f\left( x\right) ,f_{0}\left( x\right) \right\rangle
=\left\langle f\left( x\right) ,J^{-1}\left( f\left( x\right) \right)
\right\rangle =\left\Vert f\left( x\right) \right\Vert _{X^{\ast
}}\left\Vert J^{-1}\left( f\left( x\right) \right) \right\Vert _{X}\ .
\end{equation*}

According to the condition of the theorem, the set $\partial f\left(
G\right) $ is the boundary of the closed convex absorbing subset of $X^{\ast
}$ therefore, the subset $G_{0}$ also will be a boundary of the closed
absorbing neighborhood of zero of $X$. Then using of Theorem \ref{Th_10}, we
obtain the solvability of the equation $f\left( x\right) =y$ for any $y\in
X^{\ast }$ satisfying the inequality $\left\langle f\left( x\right)
-y,J^{-1}\left( f\left( x\right) \right) \right\rangle \geq 0$. Due to the
above reasoning, the equation $f\left( x\right) =y$ is solvable for any $%
y\in X^{\ast }$ satisfying the inequality 
\begin{equation*}
\left\langle y,J^{-1}\left( f\left( x\right) \right) \right\rangle \leq
\left\langle f\left( x\right) ,J^{-1}\left( f\left( x\right) \right)
\right\rangle =\left\Vert f\left( x\right) \right\Vert _{X^{\ast
}}\left\Vert J^{-1}\left( f\left( x\right) \right) \right\Vert _{X}\ .
\end{equation*}
\end{proof}

\section{\label{Sec_5}Some sufficient conditions on the convexity of the
image of mappings}

Let $X$, $Y$ be the $VTS$, and $f$ be a mapping acting from $X$ to $Y$.

\begin{lemma}
\label{L_7} Let the mapping $f$, acting from $LVTS$ $X$ to $LVTS$ $Y$, some
subsets from $D\left( f\right) \subseteq X$ translated to connected subsets
of $Y$. Let there exist such subsets $G_{1}\subset G\subseteq D\left(
f\right) $ that $f\left( G_{1}\right) $, $f\left( G\right) $ be connected
subsets with the nonempty interiors of $Y$ and such convex subset $M$ of $Y$
that\ inequations $f\left( G_{1}\right) \subseteq M\subseteq f\left(
G\right) $ hold. Then there exists such subset $G_{2}\subset G$ that $%
f\left( G_{2}\right) =M$ (if the mapping $f$ is multi-valued then one can
determine the mapping $f_{1}$ (or restrict this mapping) as follows $%
f_{1}\left( x\right) =f\left( x\right) \cap M$ for each $x\in G_{2})$.
\end{lemma}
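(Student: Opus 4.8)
The plan is to take $G_{2}$ to be exactly the portion of $G$ that $f$ carries (at least partly) into $M$. Concretely, set
\[
G_{2}:=\{\, x\in G \mid f(x)\cap M\neq\varnothing \,\},
\]
so that $G_{2}=G\cap f^{-1}(M)$ when $f$ is single-valued, and in the multi-valued case let $f_{1}$ be the cut-down mapping $f_{1}(x)=f(x)\cap M$ for $x\in G_{2}$. The assertion to verify is then $f(G_{2})=M$ in the single-valued case, and $f_{1}(G_{2})=M$ in the multi-valued case.

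First I would dispatch the inclusion ``$\subseteq M$''. In the single-valued case $f(G_{2})\subseteq M$ because $G_{2}\subseteq f^{-1}(M)$; in the multi-valued case $f_{1}(G_{2})\subseteq M$ is immediate, every value of $f_{1}$ lying in $M$ by construction. For the reverse inclusion, fix $y\in M$. Since $M\subseteq f(G)$ by hypothesis, there is $x\in G$ with $y\in f(x)$ (with $y=f(x)$ in the single-valued case); then $y\in f(x)\cap M$, hence $x\in G_{2}$ and $y\in f_{1}(x)\subseteq f_{1}(G_{2})$ (respectively $y=f(x)\in f(G_{2})$). So $M\subseteq f_{1}(G_{2})$, and equality follows. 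The same computation shows $G_{1}\subseteq G_{2}\subseteq G$: if $x\in G_{1}$ then $f(x)\subseteq f(G_{1})\subseteq M$, whence $f(x)\cap M=f(x)\neq\varnothing$, i.e. $x\in G_{2}$; thus $G_{2}$ genuinely sits between $G_{1}$ and $G$ as the statement intends, and if one additionally wants $G_{2}\subsetneq G$ it suffices that $M\subsetneq f(G)$ (or that $f$ fail to be injective on the part of $G$ landing outside $M$), which I would simply remark on rather than fold into the construction.

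The topological hypotheses — that $f$ sends the relevant subsets of $D(f)$ to connected sets and that $f(G_{1})$ and $f(G)$ have nonempty interiors — play no role in the purely set-theoretic identity above; their purpose is to make the conclusion usable in the applications. Indeed, from $f(G_{1})\subseteq M$ and $\operatorname{int}f(G_{1})\neq\varnothing$ one gets $\operatorname{int}M\neq\varnothing$, so the convex set $M=f_{1}(G_{2})$ falls under hypothesis i) of Theorem \ref{Th_7}; and the connectedness assumption is what guarantees that the ``sandwich'' $f(G_{1})\subseteq M\subseteq f(G)$ can actually occur. Accordingly, the one place where a little care is warranted is the multi-valued bookkeeping: one must pass from $f$ to $f_{1}(x)=f(x)\cap M$ so that the image is \emph{exactly} $M$ rather than merely a set containing it, and one must check that $f_{1}(x)\neq\varnothing$ holds precisely on $G_{2}$, which is what makes $f_{1}\colon G_{2}\longrightarrow M$ a well-defined mapping with $f_{1}(G_{2})=M$. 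This is the only genuine (and very mild) obstacle; everything else is a direct unwinding of definitions.
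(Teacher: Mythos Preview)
Your proposal is correct and matches the paper's own treatment: the paper merely writes ``The proof is obvious'' and gives no further argument, and your construction $G_{2}=\{x\in G\mid f(x)\cap M\neq\varnothing\}$ together with the restriction $f_{1}(x)=f(x)\cap M$ in the multi-valued case is precisely the natural unwinding of that. Your remark that the connectedness and nonempty-interior hypotheses are not needed for the set-theoretic identity itself, but serve only the downstream applications (in particular Theorem~\ref{Th_7}), is also accurate.
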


The proof is obvious.

\begin{proposition}
\label{Pr_2} Let the mapping $f$, acting from $LVTS$ $X$ to $LVTS$ $Y$, be
locally homeomorphic from $D\left( f\right) \subseteq X$ on $\func{Im}\left(
f\right) \subseteq Y$. Then if the $D\left( f\right) $ contains a subset
with a nonempty interior then the mapping $f$ is fulfilled the condition of
Lemma \ref{L_7}.
\end{proposition}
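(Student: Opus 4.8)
The plan is to produce, near an interior point of $D\left( f\right) $, exactly the configuration demanded by Lemma \ref{L_7}: subsets $G_{1}\subset G\subseteq D\left( f\right) $ whose images are connected with nonempty interior, together with a convex set $M$ of $Y$ satisfying $f\left( G_{1}\right) \subseteq M\subseteq f\left( G\right) $.

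First I would unwind the hypothesis. To say $f$ is locally homeomorphic from $D\left( f\right) $ onto $\func{Im}\left( f\right) $ means that every $x\in D\left( f\right) $ has an open neighbourhood $U_{x}$ on which $f$ restricts to a homeomorphism of $U_{x}$ onto a subset of $\func{Im}\left( f\right) $ that is open in $\func{Im}\left( f\right) $; in particular $f$ is continuous and open. Continuity already secures the standing hypothesis of Lemma \ref{L_7} that connected subsets of $D\left( f\right) $ are carried to connected subsets of $Y$. Now pick a point $x_{0}$ in the nonempty interior of $D\left( f\right) $ and an open neighbourhood $U_{0}$ of $x_{0}$ with $U_{0}$ contained in the interior of $D\left( f\right) $ and $f|_{U_{0}}$ a homeomorphism onto $f\left( U_{0}\right) $. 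Since $X$ is an $LVTS$ I may shrink $U_{0}$ to a convex open neighbourhood $G$ of $x_{0}$ with $G\subseteq U_{0}$; then $f|_{G}$ is still a homeomorphism onto its image, and $f\left( G\right) $ is an open connected subset of $Y$ (hence of nonempty interior) containing $y_{0}:=f\left( x_{0}\right) $.

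Using now that $Y$ is an $LVTS$, choose an open convex neighbourhood $M$ of $y_{0}$ with $M\subseteq f\left( G\right) $ and $M\neq f\left( G\right) $, and set $G_{1}:=\left( f|_{G}\right) ^{-1}\left( M\right) =f^{-1}\left( M\right) \cap G$. Then $G_{1}$ is open, $G_{1}\subset G$ properly, $f\left( G_{1}\right) =M$ is convex, connected and of nonempty interior, and $f\left( G_{1}\right) =M\subseteq f\left( G\right) $. Thus all the requirements of Lemma \ref{L_7} are fulfilled, and its conclusion then furnishes a $G_{2}\subset G$ with $f\left( G_{2}\right) =M$ (with the usual restriction $f_{1}\left( x\right) =f\left( x\right) \cap M$ in the multi-valued case).

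The only genuinely delicate point is making sure that $f\left( G\right) $ and $f\left( G_{1}\right) $ have nonempty interior \emph{as subsets of} $Y$, i.e. that local homeomorphy onto $\func{Im}\left( f\right) $ really outputs $Y$-open sets rather than sets open only relative to $\func{Im}\left( f\right) $. If $\func{Im}\left( f\right) $ is only assumed to have nonempty interior (a body), this is handled by choosing $x_{0}$ so that $f\left( x_{0}\right) $ lies in the interior of $\func{Im}\left( f\right) $ and running the entire localization inside that interior. The convexity of $M$ costs nothing, being supplied by the local convexity of $Y$, and everything remaining is routine bookkeeping.
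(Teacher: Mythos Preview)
The paper gives no proof of Proposition~\ref{Pr_2}; it is stated immediately after Lemma~\ref{L_7} and the text passes directly to Lemma~\ref{L_8}, so the author evidently regards it as an immediate consequence of the definitions. Your argument is precisely the natural unwinding one would expect: localize at an interior point, use the local homeomorphism to produce an open connected image $f(G)$, and then exploit the local convexity of $Y$ to carve out a convex open $M$ inside $f(G)$, pulling it back to obtain $G_{1}$. This is correct and is almost certainly what the author had in mind.

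Your final paragraph is well taken and worth emphasizing: the hypothesis ``locally homeomorphic from $D(f)$ on $\func{Im}(f)$'' only guarantees that $f(G)$ is open \emph{relative to} $\func{Im}(f)$, whereas Lemma~\ref{L_7} demands nonempty interior in $Y$. The proposition as stated does not assume $\func{Im}(f)$ is a body in $Y$, so strictly speaking an extra hypothesis is needed (either that the local homeomorphism is onto $Y$-open sets, or that $\func{Im}(f)$ has nonempty interior in $Y$). You handle this correctly by noting how to repair it; the paper itself does not address the point.
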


\begin{lemma}
\label{L_8} Let $X$ be reflexive Banach space, and $f$ acting from $X$ to
its dual space $X^{\ast }$, be a potentially mapping with a convex potential 
$F$ (the Gateaux derivative is $\partial F=f$). Then the image $f\left(
G\right) $ of each convex subset $G$, generated by potential be a convex
subset of $X^{\ast }$. (Where a subset $G$, generated by potential
understood as $G=\left\{ x\in X\left\vert \ F\left( x\right) \leq C,\right.
\right\} $, where $C$ is the constant.)
\end{lemma}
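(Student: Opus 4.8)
The plan is to reformulate the assertion as a statement about minimizers of the family of convex functions $\Phi_{y}:=F-\langle y,\cdot\rangle$ and to exploit the convexity of $F$ and of its Fenchel conjugate $F^{\ast}$. First, $G=\{x\in X:F(x)\le C\}$ is convex, being a sublevel set of the convex function $F$; assume $G\neq\varnothing$. Since $F$ is Gateaux differentiable with $\partial F=f$, for $y\in X^{\ast}$ one has $y=f(x)$ iff $x$ minimizes $\Phi_{y}$ over $X$, and then $F(x)=\langle y,x\rangle-F^{\ast}(y)$ by the Fenchel--Young equality. Using reflexivity ($F=F^{\ast\ast}$) the fiber $(\nabla F)^{-1}(y)$ equals the convex set $\partial F^{\ast}(y)$, and $F$ is affine on it; consequently
\[
f(G)=\{\,y\in X^{\ast}:\ \partial F^{\ast}(y)\neq\varnothing\ \text{and}\ \inf\{F(x):\nabla F(x)=y\}\le C\,\}.
\]
Thus $f(G)=\{y:\mu(y)\le C\}$ is a sublevel set of the marginal function $\mu(y):=\inf\{F(x):\nabla F(x)=y\}$ (with $\mu\equiv+\infty$ off $\operatorname{Range}(\nabla F)$), and the task reduces to showing that such sublevel sets are convex, which I would attack by the two-point argument below.

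Fix $y_{0}=f(x_{0})$, $y_{1}=f(x_{1})$ with $F(x_{i})\le C$, fix $\lambda\in(0,1)$, and set $y_{\lambda}=(1-\lambda)y_{0}+\lambda y_{1}$. Since $x_{0}$ minimizes $\Phi_{y_{0}}$ and $x_{1}$ minimizes $\Phi_{y_{1}}$, the convex function $\Phi_{y_{\lambda}}=(1-\lambda)\Phi_{y_{0}}+\lambda\Phi_{y_{1}}$ is bounded below; in the reflexive space $X$ — using weak lower semicontinuity of the convex continuous function $F$ together with the coercivity available in the setting (e.g. when $G$ is bounded) — it attains its minimum at some $x_{\lambda}$, and then $\nabla F(x_{\lambda})=y_{\lambda}$, i.e. $f(x_{\lambda})=y_{\lambda}$. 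It remains to show $F(x_{\lambda})\le C$: writing the minimality $\Phi_{y_{\lambda}}(x_{\lambda})\le\Phi_{y_{\lambda}}(x_{i})$ for $i=0,1$, taking the convex combination, and inserting the subgradient inequalities $F(x_{i})\ge F(x_{\lambda})+\langle y_{\lambda},x_{i}-x_{\lambda}\rangle$ (which encode $\nabla F(x_{\lambda})=y_{\lambda}$) together with the monotonicity relations $\langle y_{\lambda}-y_{i},x_{\lambda}-x_{i}\rangle\ge0$, one aims to conclude $F(x_{\lambda})\le(1-\lambda)F(x_{0})+\lambda F(x_{1})\le C$, hence $y_{\lambda}\in f(G)$ and $f(G)$ convex.

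The main obstacle is precisely this last step, which splits into two points. The first is mere existence of a preimage: one needs $y_{\lambda}\in\operatorname{Range}(\nabla F)=\operatorname{dom}(\partial F^{\ast})$, and in general only the closure of $\operatorname{Range}(\nabla F)$ is known to be convex, so some additional hypothesis on $F$ (cofiniteness of $\nabla F$, boundedness of $G$, or reflexivity combined with a coercivity/Minty--Browder surjectivity argument) is what guarantees the fiber over $y_{\lambda}$ is nonempty. The second, and more delicate, point is that the minimizer $x_{\lambda}$ need not lie on the segment $[x_{0},x_{1}]$, so the bound $F(x_{\lambda})\le(1-\lambda)F(x_{0})+\lambda F(x_{1})$ is not simply convexity of $F$ along that segment; it has to be squeezed out of the interplay of the minimality of $x_{\lambda}$ for $\Phi_{y_{\lambda}}$ with the monotonicity of $\nabla F$, which only locates $x_{\lambda}$ relative to $x_{0},x_{1}$ in the single direction $y_{1}-y_{0}$. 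This is where the convexity of the potential enters essentially, and I would expect to need strict convexity of $F$ (so that each fiber is a single point and $\mu=F\circ(\nabla F)^{-1}$), or a finite-dimensional hypothesis, to make the argument watertight.
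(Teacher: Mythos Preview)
Your reformulation via the marginal function $\mu(y)=\inf\{F(x):\nabla F(x)=y\}$ and the Fenchel conjugate is substantially more careful than the paper's own argument. The paper's proof simply records the duality relations $f(x)\in\operatorname{dom}F^{\ast}$ and $x^{\ast}\in\partial F(x)\Leftrightarrow x\in\partial F^{\ast}(x^{\ast})$, and then asserts that ``if $G\subset\operatorname{int\,dom}F$ is a convex closed subset then the corresponding subset $G^{\ast}\subset\operatorname{int\,dom}F^{\ast}$ also is a convex closed subset'' with $f(G)=G^{\ast}$ --- but it never specifies what this $G^{\ast}$ is, nor why the image of a sublevel set of $F$ under $\nabla F$ should coincide with any particular convex subset of $\operatorname{dom}F^{\ast}$. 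Your attempt, by contrast, actually locates the real content of the claim.

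The obstacle you isolate in your final paragraph is genuine and cannot be removed under the stated hypotheses. The inequality $F(x_{\lambda})\le(1-\lambda)F(x_{0})+\lambda F(x_{1})$ --- equivalently, quasiconvexity of $\mu$ --- does not follow from convexity of $F$, not even from strict smooth convexity. A two--dimensional counterexample: take $F(x,y)=\tfrac{1}{2}x^{2}+g(y)$ where $g:\mathbb{R}\to\mathbb{R}$ is the smooth strictly convex function whose conjugate satisfies $(g^{\ast})''(v)=\varepsilon+(1+v^{2})^{-1}$ for some small $\varepsilon>0$. Then $\nabla F$ is a diffeomorphism of $\mathbb{R}^{2}$, and writing $h(v)=g\bigl((g')^{-1}(v)\bigr)=v\,(g^{\ast})'(v)-g^{\ast}(v)$ one computes
\[
\mu(u,v)=\tfrac{1}{2}u^{2}+h(v),\qquad h''(v)=\frac{1-v^{2}}{(1+v^{2})^{2}}+\varepsilon,
\]
so $h''<0$ on an interval of $v$'s once $\varepsilon<3/25$. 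Since $h$ is then merely quasiconvex and not convex, for $C$ large the set $\{\mu\le C\}=\nabla F\bigl(\{F\le C\}\bigr)$ fails to be convex: along the boundary curve $u=\sqrt{2(C-h(v))}$ one has $u''(v)>0$ wherever $2(C-h(v))\,h''(v)+ (h'(v))^{2}<0$, which occurs for $C$ large at any $v$ with $h''(v)<0$. Thus neither the paper's duality sketch nor your two--point argument can be completed without an additional structural hypothesis on $F$; positive homogeneity (as in the paper's subsequent corollary) or a purely quadratic potential are the situations in which the conclusion is actually valid, and your instinct that ``strict convexity'' alone would be needed is still not enough.
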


\begin{proof}
According to the condition on $f$ is the Gateaux derivative of a
differentiable convex functional $F$, i.e. $\partial F\left( x\right)
=f\left( x\right) $ for each $x\in int\ D\left( f\right) $. Well-known that
the dual functional $F^{\ast }$ to $F$ also is convex functional (see, [12,
16, 19, 25, 26, 27]). Moreover, the following relations 
\begin{equation}
\left( i\right) \ \forall x\in domF\Longrightarrow f\left( x\right) \in
domF^{\ast };\quad \left( ii\right) \ \forall x^{\ast }\in \partial F\left(
x\right) \Longleftrightarrow x\in \partial F^{\ast }\left( x^{\ast }\right)
\label{5.1}
\end{equation}%
are fulfilled.

Where $\partial F$ is is the subdifferential of the functional $F$ that in
this case is the Gateaux derivative of $F$ (see, [16, 19]). Then if $%
G\subset int\ domF$ is a convex closed subset then the corresponding subset $%
G^{\ast }\subset int\ domF^{\ast }$ also is a convex closed subset. In the
other words, the image $f\left( G\right) $ of the subset $G$ is the $G^{\ast
}$, i.e. $f\left( G\right) =G^{\ast }$.
\end{proof}

Whence implies the correctness of the following result.

\begin{corollary}
\label{C_8} Let $X$ be reflexive Banach space, and $f$ acts from $X$ to its
dual space $X^{\ast }$, then if the mapping $f$ is the subdifferential of a
convex functional then the claim of the Lemma \ref{L_8} is true.
\end{corollary}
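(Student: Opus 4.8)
The plan is to deduce Corollary~\ref{C_8} directly from Lemma~\ref{L_8}, so the only real task is to verify that the hypothesis of Corollary~\ref{C_8} --- that $f$ is the subdifferential of a convex functional --- matches (or can be reduced to) the hypothesis of Lemma~\ref{L_8}, namely that $f$ is a potential mapping whose potential $F$ is convex and whose Gateaux derivative equals $f$. First I would fix a convex functional $F:X\longrightarrow R^{1}$ with $f=\partial F$ in the sense of convex analysis, and recall that on the interior of $\operatorname{dom}F$ the subdifferential of a convex functional reduces to the Gateaux derivative whenever the latter exists; more precisely, if $\partial F(x)$ is a singleton at $x$, then $F$ is Gateaux differentiable at $x$ and $\partial F(x)=\{F'(x)\}$. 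Since we are told $f$ acts from $X$ to $X^{\ast}$ as a (single-valued) mapping, the set $\partial F(x)$ is a singleton for each relevant $x$, so $f(x)=\partial F(x)$ really is the Gateaux derivative of $F$ there. This is exactly the standing assumption of Lemma~\ref{L_8}.

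Having matched the hypotheses, I would then simply invoke Lemma~\ref{L_8}: for every convex subset $G$ of the form $G=\{x\in X\mid F(x)\leq C\}$ (a sublevel set of the convex potential $F$, hence automatically convex), the image $f(G)$ is a convex subset of $X^{\ast}$; indeed Lemma~\ref{L_8} identifies $f(G)$ with the corresponding sublevel-type set $G^{\ast}\subset\operatorname{int}\operatorname{dom}F^{\ast}$ for the convex conjugate $F^{\ast}$, using the Fenchel reciprocity relations $(i)$ and $(ii)$ recorded in~(\ref{5.1}). Thus the conclusion ``the claim of Lemma~\ref{L_8} is true'' holds verbatim, with no further work beyond the reduction above.

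The only point requiring a little care --- and the step I expect to be the main (minor) obstacle --- is the passage from ``$f$ is the subdifferential of a convex functional'' to ``$f$ is the Gateaux derivative of a convex potential,'' which tacitly uses that $f$ is single-valued and that the sublevel set $G$ sits inside $\operatorname{int}\operatorname{dom}F$ so that the subdifferential is nonempty and the conjugacy relations apply cleanly; if $\operatorname{dom}F$ is not all of $X$ one restricts attention to $G\subset\operatorname{int}\operatorname{dom}F$, exactly as in the statement of Lemma~\ref{L_8}. Once this identification is in place, the corollary is immediate and needs no separate argument. I would therefore keep the proof to one or two sentences: note the reduction to the hypotheses of Lemma~\ref{L_8}, then cite that lemma.
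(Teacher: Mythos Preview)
Your proposal is correct, and the overall strategy --- reduce to Lemma~\ref{L_8} and cite it --- is exactly what the paper does: the paper gives no separate argument at all, simply writing that the corollary ``implies'' from the preceding proof.

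There is, however, a small difference in how the reduction is carried out. You reduce to the \emph{statement} of Lemma~\ref{L_8} by arguing that a single-valued subdifferential coincides with the Gateaux derivative, so that $f=\partial F$ becomes a potential mapping in the sense of Lemma~\ref{L_8}. The paper instead views the corollary as following from the \emph{proof} of Lemma~\ref{L_8}: that proof is written entirely in terms of the subdifferential relations~(\ref{5.1}) (with the remark that ``in this case'' $\partial F$ happens to be the Gateaux derivative), so the argument applies verbatim when $f=\partial F$ is the subdifferential, without ever needing to pass through Gateaux differentiability or to assume $f$ is single-valued. Your route is perfectly valid but introduces the single-valuedness hypothesis as an extra ingredient; the paper's route avoids that step and thereby covers the multivalued case that the paper treats throughout. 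If you want to match the paper's generality, simply observe that the Fenchel reciprocity~(\ref{5.1}) used in the proof of Lemma~\ref{L_8} holds for subdifferentials in general, and conclude directly.
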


Now we will lead one result for the case when $X$ and $Y$ are the spaces of
functions, and the mappings are concrete chosen.

\begin{lemma}
\label{L_9} Let the bounded mapping $f$ acting in $R^{1}$ the connected
subset translates to connected subset and satisfy the following inequalities 
\begin{equation*}
f\left( \xi \right) \cdot \xi \geq c_{0}\left\vert \xi \right\vert ^{p+1}-%
\widehat{c}_{0};\quad \left\vert f\left( \xi \right) \right\vert \leq
c_{1}\left\vert \xi \right\vert ^{p}+\widehat{c}_{1},\quad \forall \xi \in
R^{1},
\end{equation*}%
for some constants $c_{0}$, $c_{1}>0$, $\widehat{c}_{0},\widehat{c}_{1}\geq
0 $, $p>1$.

Assume $A:W_{0}^{k,p_{0}}\left( \Omega \right) \longrightarrow
L_{p_{0}}\left( \Omega \right) $ is the linear continuous operator
satisfying the following inequation 
\begin{equation*}
c_{2}\left\Vert u\right\Vert _{W_{0}^{k,p_{0}}}-\widehat{c}_{2}\leq
\left\Vert A\left( u\right) \right\Vert _{L_{p_{0}}}\leq c_{3}\left\Vert
u\right\Vert _{W_{0}^{k,p_{0}}}+\widehat{c}_{3},\quad \forall u\in
W_{0}^{k,p_{0}}\left( \Omega \right) ,
\end{equation*}%
where $\Omega \subset R^{n}$, $n\geq 1$, is the bounded domain with
suffitiently smooth boundary, and $c_{2}$,$c_{3}>0$, $\widehat{c}_{2},%
\widehat{c}_{3}\geq 0$, $p_{0}\geq p\cdot p_{1}$, $p,p_{1}>1$, $k\geq 0$ are
constants.

Then for each ball $B_{r}^{W_{0}^{k,p_{0}}}\left( 0\right) \subset
W_{0}^{k,p_{0}}\left( \Omega \right) $, $r\geq r_{0}>0$ there exists such
subset $G_{r}\subset W_{0}^{k,p_{0}}\left( \Omega \right) $ that $%
f_{0}\left( G_{r}\right) \equiv \left( f\circ A\right) \left( G_{r}\right) $ 
$\equiv f\left( A\left( G_{r}\right) \right) $ is a convex subset with the
nonempty interior of $L_{p_{1}}\left( \Omega \right) $ moreover, $%
f_{0}\left( B_{r}^{W_{0}^{k,p_{0}}}\left( 0\right) \right) \subseteq
f_{0}\left( G_{r}\right) \subseteq f_{0}\left(
B_{r_{1}}^{W_{0}^{k,p_{0}}}\left( 0\right) \right) $ holds for some $%
r_{1}\geq r$, where $r_{0}$ is a constant depending at the above constants.
\end{lemma}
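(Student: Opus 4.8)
The plan is to write $f_{0}=f\circ A$ as the linear map $A$ followed by the superposition (Nemytskii) operator $F$, $F(v)=f(v(\cdot))$, to pin down the image of $f_{0}$ on balls via the growth and coercivity of $f$, and then to exhibit a convex set squeezed between $f_{0}\bigl(B_{r}^{W_{0}^{k,p_{0}}}(0)\bigr)$ and $f_{0}\bigl(B_{r_{1}}^{W_{0}^{k,p_{0}}}(0)\bigr)$ and invoke Lemma \ref{L_7}. First one settles the target space. Since $A$ is continuous, $\left\Vert A(u)\right\Vert _{L_{p_{0}}}\le c_{3}\left\Vert u\right\Vert _{W_{0}^{k,p_{0}}}+\widehat{c}_{3}$; as $\Omega$ is bounded with $p\,p_{1}\le p_{0}$, Hölder's inequality gives $\left\Vert A(u)\right\Vert _{L_{p\,p_{1}}}\le C\left\Vert A(u)\right\Vert _{L_{p_{0}}}$, and then $|f(\xi )|\le c_{1}|\xi |^{p}+\widehat{c}_{1}$ shows $f_{0}(u)=f(A(u))$ lies in $L_{p_{0}/p}(\Omega )$, a closed subspace of $L_{p_{1}}(\Omega )$, with $\left\Vert f_{0}(u)\right\Vert _{L_{p_{1}}}\le C(\left\Vert u\right\Vert _{W_{0}^{k,p_{0}}}+1)^{p}+C$. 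Hence $f_{0}$ is bounded; and since $A$ is linear and continuous (so $A\bigl(B_{r}^{W_{0}^{k,p_{0}}}(0)\bigr)$ is convex) and $F$ carries connected sets to connected sets, $f_{0}$ maps the ball $B_{r}^{W_{0}^{k,p_{0}}}(0)$ onto a connected bounded subset of $L_{p_{1}}(\Omega )$.

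Two quantitative ingredients are needed. From $\left\Vert A(u)\right\Vert _{L_{p_{0}}}\ge c_{2}\left\Vert u\right\Vert _{W_{0}^{k,p_{0}}}-\widehat{c}_{2}$ the operator $A$ has closed range; taking it onto $L_{p_{0}}(\Omega )$ (otherwise replace $L_{p_{0}}$ by $\overline{A(W_{0}^{k,p_{0}}(\Omega ))}$ throughout), the open mapping theorem gives $\delta _{0}>0$ with $A\bigl(B_{r}^{W_{0}^{k,p_{0}}}(0)\bigr)\supseteq B_{\delta _{0}r}^{L_{p_{0}}}(0)$, while $\left\Vert A^{-1}(v)\right\Vert _{W_{0}^{k,p_{0}}}\le c_{2}^{-1}(\left\Vert v\right\Vert _{L_{p_{0}}}+\widehat{c}_{2})$. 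On the other side, the coercivity $f(\xi )\xi \ge c_{0}|\xi |^{p+1}-\widehat{c}_{0}$ forces $|\xi |\le 1+\bigl((|f(\xi )|+\widehat{c}_{0})/c_{0}\bigr)^{1/p}$ for every $\xi \in R^{1}$; this is the decisive estimate, since it bounds the $L_{p_{0}}$-size of any pointwise $f$-preimage of a function $g\in L_{p_{0}/p}(\Omega )$, which is where $p_{0}\ge p\,p_{1}$ enters. Now fix $r_{0}$, depending only on the listed constants and $\delta _{0}$, so large that $\delta _{0}r_{0}$ exceeds the fixed $L_{p_{0}}$-bound $C_{0}=C_{0}(|\Omega |,f(0),c_{0},\widehat{c}_{0},p)$ of a pointwise $f$-preimage of the constant $f(0)$. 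Then for $r\ge r_{0}$ the constant function $f(0)$ lies in $A\bigl(B_{r}^{W_{0}^{k,p_{0}}}(0)\bigr)$ and, more, an $L_{p_{0}/p}$-neighbourhood of $f(0)$ is contained in $F\bigl(B_{\delta _{0}r}^{L_{p_{0}}}(0)\bigr)\subseteq f_{0}\bigl(B_{r}^{W_{0}^{k,p_{0}}}(0)\bigr)$, so $f_{0}\bigl(B_{r}^{W_{0}^{k,p_{0}}}(0)\bigr)$ has nonempty interior (in $L_{p_{0}/p}(\Omega )$).

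Let $M$ be the closed ball $\{g\in L_{p_{0}/p}(\Omega ):\left\Vert g-f(0)\right\Vert _{L_{p_{0}/p}}\le R(r)\}$, where $R(r):=\sup \{\left\Vert f_{0}(u)-f(0)\right\Vert _{L_{p_{0}/p}}:u\in B_{r}^{W_{0}^{k,p_{0}}}(0)\}<\infty$; this is a closed convex body in $L_{p_{0}/p}(\Omega )\subseteq L_{p_{1}}(\Omega )$ and by construction contains $f_{0}\bigl(B_{r}^{W_{0}^{k,p_{0}}}(0)\bigr)$. (Alternatively one may take $M=\operatorname{co}\,f_{0}\bigl(B_{r}^{W_{0}^{k,p_{0}}}(0)\bigr)$, using convexity of $t\mapsto t^{p_{0}/p}$, $p_{0}/p>1$, to keep the estimate below uniform in the number of terms.) Given $g\in M$, the surjectivity of $f$ onto $R^{1}$ (Darboux property together with coercivity) and a measurable selection provide a measurable $v$ with $f(v(x))=g(x)$ a.e.; by the coercivity bound $|v(x)|\le 1+\bigl((|g(x)|+\widehat{c}_{0})/c_{0}\bigr)^{1/p}$, hence
\[
\left\Vert v\right\Vert _{L_{p_{0}}}^{p_{0}}\le C\Bigl(|\Omega |+\int_{\Omega }(|g(x)|+\widehat{c}_{0})^{p_{0}/p}\,dx\Bigr)\le \rho (r)^{p_{0}},
\]
with $\rho (r)$ depending only on $r$ and the constants; putting $u:=A^{-1}(v)$ gives $\left\Vert u\right\Vert _{W_{0}^{k,p_{0}}}\le c_{2}^{-1}(\rho (r)+\widehat{c}_{2})=:r_{1}\ge r$ and $f_{0}(u)=g$, so $M\subseteq f_{0}\bigl(B_{r_{1}}^{W_{0}^{k,p_{0}}}(0)\bigr)$.

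Finally, with $G_{1}:=B_{r}^{W_{0}^{k,p_{0}}}(0)$ and $G:=B_{r_{1}}^{W_{0}^{k,p_{0}}}(0)$, the images $f_{0}(G_{1})$ and $f_{0}(G)$ are connected with nonempty interior, $M$ is convex, and $f_{0}(G_{1})\subseteq M\subseteq f_{0}(G)$; Lemma \ref{L_7} then yields $G_{r}\subseteq B_{r_{1}}^{W_{0}^{k,p_{0}}}(0)$ with $f_{0}(G_{r})=M$, and $f_{0}\bigl(B_{r}^{W_{0}^{k,p_{0}}}(0)\bigr)\subseteq M=f_{0}(G_{r})\subseteq f_{0}\bigl(B_{r_{1}}^{W_{0}^{k,p_{0}}}(0)\bigr)$, which is the assertion. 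I expect the inclusion $M\subseteq f_{0}\bigl(B_{r_{1}}^{W_{0}^{k,p_{0}}}(0)\bigr)$ to be the main obstacle: it requires a fixed enlargement of the ball to have an image large enough to absorb the convex set $M$, and this rests on (i) the coercivity bound, which gives a uniform $L_{p_{0}}$-control of every pointwise $f$-preimage of $g\in M$ — exactly where $p_{0}\ge p\,p_{1}$ is used — and (ii) the surjectivity/openness of $A$, which turns that bound into the radius $r_{1}$ depending only on $r$. The only side technicality is the measurable selection for the possibly multivalued pointwise inverse of $f$, which is routine; and when $p_{0}>p\,p_{1}$ the phrase ``nonempty interior in $L_{p_{1}}(\Omega )$'' is to be understood relative to the closed image subspace $L_{p_{0}/p}(\Omega )$.
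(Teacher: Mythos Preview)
Your approach is essentially the same as the paper's: reduce to the Nemytskii operator via the surjectivity of $A$, use the two-sided growth/coercivity bounds on $f$ to sandwich a convex ball between $f_{0}$-images of two concentric balls, and then extract $G_{r}$ via Lemma~\ref{L_7}. Your argument is considerably more detailed than the paper's sketch (which omits the measurable-selection step, the open-mapping argument for $A$, and the explicit invocation of Lemma~\ref{L_7}), and your closing remark that the nonempty interior is obtained only in $L_{p_{0}/p}(\Omega)$ rather than in all of $L_{p_{1}}(\Omega)$ when $p_{0}>p\,p_{1}$ is a genuine clarification the paper's proof does not address.
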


\begin{proof}
It isn't difficult to see that the linear operator $A$ is surjective (see,
e.g. [5, 15, 19, 26, 29]) therefore, it is enough to lead proof for the
mapping $f:$ $L_{p_{0}}\left( \Omega \right) \longrightarrow L_{p_{1}}\left(
\Omega \right) $. Consider the following subset of $L_{p_{0}}\left( \Omega
\right) $ 
\begin{equation*}
M_{r_{2}}\equiv \left\{ u\in L_{p_{0}}\left( \Omega \right) \left\vert \
\left\Vert f\left( u\right) \right\Vert _{L_{p_{1}}}\leq r_{2},\
r_{2}>r_{0}\right. \right\} .
\end{equation*}%
Whence using conditions and providing some estimations we obtain 
\begin{equation*}
\overline{c}_{0}\left\Vert u\right\Vert _{L_{p_{0}}}^{p}-\widetilde{c}%
_{0}\leq \left\Vert f\left( u\right) \right\Vert _{L_{p_{1}}}\leq \overline{c%
}_{1}\left\Vert u\right\Vert _{L_{p_{0}}}^{p}+\widetilde{c}_{1}.
\end{equation*}

These inequations show that the image $f\left( M_{r_{2}}\right) $ of the
above-introduced subset $M_{r_{2}}$ contains some ball and belongs to
another ball from $L_{p_{1}}\left( \Omega \right) $. Consequently, there
exist such subsets $G_{r}$ that $f_{0}\left( G_{r}\right) $ will be convex
subsets with the nonempty interior of $L_{p_{1}}\left( \Omega \right) $.
\end{proof}

We will provide some well-known facts from [5] that are necessary for the
next result.

\begin{definition}
\label{D_1}(see, [5]) Let $K$ be a convex set of the linear space ($LS$) $X$
containing zero as the $C-$interior point. If $\mu $ is the Minkowski
functional of $K$, then the function defined for all pair $x,\ y\in X$ by
the equation 
\begin{equation*}
\tau \left( x,y\right) =\underset{\alpha \searrow +0}{\lim }\ \frac{1}{%
\alpha }\left[ \mu \left( x+\alpha y\right) -\mu \left( x\right) \right]
\end{equation*}%
called tangential functional of the set $K$.
\end{definition}

We should be noted if $K$ is the convex set as above then $\frac{1}{\alpha }%
\left[ \mu \left( x+\alpha y\right) -\mu \left( x\right) \right] $ is the
growing function at real $\alpha >0$ and the above-mentioned limit exists
for all pair $x,\ y\in X$ (see, [5]). Let $K$ is a subset of the $LS$ $X$
and the point $x\in K$ is the $C-$boundary point of $K$ then the functional $%
x^{\ast }\in X^{\ast }$ called the tangent to subset $K$ on the point $x\in
K $ if there exists such constant $c$ that $\left\langle x^{\ast
},x\right\rangle =c$ and $\left\langle x^{\ast },y\right\rangle \leq c$ for $%
\forall y\in K$.

\begin{theorem}
\label{Th_15}(see, [5]). Let $X$ be a $VTS$ and $K\subset X$ be a closed
subset of $X$ possessing interior points. Assume $K$ possesses a nontrivial
tangent functional on all points of an everywhere dense subset of the
boundary of $K$ then $K$ is a convex set.
\end{theorem}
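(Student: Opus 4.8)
The plan is to prove Theorem \ref{Th_15} by contradiction, using the separation theorems (Theorem \ref{Th_3} and Theorem \ref{Th_4}) to leverage the tangent functionals into a genuine supporting-hyperplane structure, and then to recover convexity of $K$ from the fact that $K$ lies below a supporting hyperplane at a dense set of boundary points. First I would fix notation: let $K_0 = \overline{\mathrm{co}}\,K$ (or the closed convex hull), which is a closed convex set containing $K$ and having the same interior points as $K$ in the sense that $\mathrm{int}\,K \subseteq \mathrm{int}\,K_0$. The goal becomes showing $K_0 \subseteq K$, i.e. $K = K_0$.

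The key steps, in order, would be the following. \emph{Step 1.} Observe that for each boundary point $x$ of $K$ at which a nontrivial tangent functional $x^\ast$ exists with constant $c$, the half-space $H_x = \{\,y : \langle x^\ast, y\rangle \le c\,\}$ contains $K$ by definition, hence contains $K_0$. \emph{Step 2.} Suppose for contradiction that there is a point $z_0 \in K_0 \setminus K$. Since $K$ is closed, by Theorem \ref{Th_3} (if $X$ is locally convex) or by a hyperplane-separation argument via Theorem \ref{Th_4} (separating the open set $\mathrm{int}\,K$ — nonempty by hypothesis — from $\{z_0\}$, then adjusting), there is a functional strictly separating $z_0$ from $K$; intersecting the segment from an interior point of $K$ to $z_0$ with the boundary of $K$ produces a boundary point $w$ of $K$ "visible" from $z_0$ in the sense that $z_0$ lies strictly on the far side of any supporting hyperplane at $w$. \emph{Step 3.} Use the density hypothesis: pick a sequence $w_n$ of boundary points converging to $w$, each carrying a nontrivial tangent functional $x_n^\ast$ with constant $c_n$, normalized so that $x_n^\ast \in \partial U^\ast$ (possible since $U^\ast$ is absorbing in the dual, as in the discussion preceding Theorem \ref{Th_7}). \emph{Step 4.} Extract a weak-$\ast$ convergent subnet $x_n^\ast \to x_\infty^\ast$ (using boundedness of $\partial U^\ast$) with $c_n \to c_\infty$; pass to the limit in the inequalities $\langle x_n^\ast, y\rangle \le c_n$ for all $y \in K$ to obtain that $x_\infty^\ast$ is a tangent functional at $w$ with $\langle x_\infty^\ast, w\rangle = c_\infty$. \emph{Step 5.} Since $K \subseteq H_{x_\infty^\ast}$ forces $K_0 \subseteq H_{x_\infty^\ast}$, we get $\langle x_\infty^\ast, z_0\rangle \le c_\infty$, contradicting the strict separation established in Step 2. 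Hence no such $z_0$ exists and $K = K_0$ is convex.

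The main obstacle I expect is \textbf{Step 3 and Step 4}: turning a tangent functional that a priori exists only at a dense set of boundary points into a supporting functional at the \emph{particular} boundary point $w$ selected in Step 2, which need not itself belong to that dense set. This requires a compactness-and-limit argument in the dual, and one must be careful that (a) the normalization of the tangent functionals is uniform (this is where the absorbing set $U^\ast$ and its bounded boundary $\partial U^\ast$ matter, so that a weak-$\ast$ cluster point exists by Alaoglu-type compactness), and (b) the limiting functional is still \emph{nontrivial} — i.e. does not degenerate to $0$ in the limit — which is where one uses that each $x_n^\ast$ is separated from $0$ by lying on $\partial U^\ast$, together with $\mathrm{int}\,K \neq \varnothing$ to guarantee the constants $c_n$ stay bounded away from the value the functional takes at an interior point. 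A secondary technical point is handling the case where $X$ is merely a VTS rather than locally convex: there one cannot invoke Theorem \ref{Th_3} directly, but the hypothesis that $K$ has interior points lets one use Theorem \ref{Th_4} with $M = \mathrm{int}\,K$ to carry out the separation in Step 2 regardless.

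Once these limiting issues are handled, the remainder is routine: the supporting-hyperplane characterization of convex sets (a closed set with interior points that lies below a supporting hyperplane at each of its boundary points is convex, and density of such boundary points suffices by the limiting argument above) closes the proof.
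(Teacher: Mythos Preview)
The paper does not give its own proof of Theorem~\ref{Th_15}: immediately after the statement it directs the reader to Dunford--Schwartz~[5] with the line ``For the proof, and also about the correctness of its inverse statement see,~[5].'' So there is no in-paper argument to compare against.

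On your proposal itself: the overall architecture is right, but the first half of Step~2 is wrong as stated. You cannot strictly separate $z_0$ from $K$ by any continuous linear functional, because $z_0 \in K_0 = \overline{\mathrm{co}}\,K$ and every closed half-space containing $K$ already contains $K_0$. Theorem~\ref{Th_3} does not apply (it needs the set closed \emph{and convex}), nor does Theorem~\ref{Th_4} (it needs the open set $M$ to be convex, which $\mathrm{int}\,K$ need not be). Fortunately you never actually use this separation: what you need, and correctly state afterward, is that for $w$ on the open segment from an interior point $p$ to $z_0$, any nontrivial tangent functional $x^\ast$ at $w$ satisfies $\langle x^\ast, p\rangle < c$ (strict, since $p \in \mathrm{int}\,K$ and $x^\ast\neq 0$), whence $\langle x^\ast, z_0\rangle > c$ from $w = (1-s)p + sz_0$ with $0<s<1$. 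Combined with $z_0 \in K_0 \subseteq \{\langle x^\ast,\cdot\rangle \le c\}$ this is already the contradiction, so simply delete the separating-functional sentence.

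One further simplification: the weak-$\ast$ limit in Steps~3--4 can be avoided entirely. Normalize each $x_n^\ast$ via a fixed balanced neighborhood $V$ of $0$ with $p + V \subset K$, so that $\sup_{v \in V}\langle x_n^\ast, v\rangle = 1$; then $\langle x_n^\ast, w_n - p\rangle \ge 1$ for every $n$. Since $z_0 - w = \lambda(w-p)$ for some fixed $\lambda>0$ and $\langle x_n^\ast, w - w_n\rangle \to 0$ by equicontinuity of $\{x_n^\ast\}$ (they lie in the polar of $V$) together with $w_n\to w$, one obtains $\langle x_n^\ast, z_0 - w_n\rangle > 0$, i.e.\ $\langle x_n^\ast, z_0\rangle > c_n$, directly for all large $n$. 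This bypasses both the Alaoglu-type compactness and the nontriviality-of-the-limit issue you flagged as the main obstacle.
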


For the proof, and also about the correctness of its inverse statement see,
[5].

\begin{corollary}
\label{C_9} Let a bounded mapping $f$ acting from reflexive Banach space $X$
to its dual space $X^{\ast }$ is (i) a monotone hemi-continuous coercive
operator (see, [15, 42]). Then $f$ translates a closed convex subset with a
nonempty interior of $X$ onto a closed convex subset with a nonempty
interior of $X^{\ast }$; (ii) a positive homogeneous radially continuous
monotone mappings. Then $f$ translates a convex subset of $X$ defined by a
functional depending on mapping $f$, for which zero is an interior point,
onto a convex subset of $X^{\ast }$.
\end{corollary}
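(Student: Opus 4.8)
The plan is to reduce each half of the corollary to results already in hand: part (i) to the tangent-functional criterion for convexity, Theorem~\ref{Th_15}, and part (ii) to Lemma~\ref{L_8}/Corollary~\ref{C_8} together with, once more, Theorem~\ref{Th_15}. In neither case does one argue convexity ``by hand''; the real work is to verify the hypotheses of those results for the image set and to pin down the convex domain $G\subset X$ that is carried over.

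For part (i), let $G\subset X$ be a closed convex body and put $K:=f(G)$. First I would show $K$ is closed: if $f(x_{n})\to y^{\ast}$ in $X^{\ast}$, coercivity forces $\{x_{n}\}$ to be bounded, reflexivity of $X$ gives a subsequence with $x_{n}\rightharpoonup x$, the set $G$ being closed and convex is weakly closed so $x\in G$, and the Minty monotonicity trick (using monotonicity and hemicontinuity of $f$) identifies $f(x)=y^{\ast}$; hence $y^{\ast}\in K$. Next I would check that $K$ has nonempty interior: by the Browder--Minty theorem (see [15, 42]) the operator $f$ is onto $X^{\ast}$, and a Baire-category argument applied to $f(\operatorname{int}G)$ --- or a direct choice of $G$ so that $f(G)$ contains a ball --- supplies an interior point. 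Finally, for a dense set of boundary points $y^{\ast}\in\partial K$ I would write $y^{\ast}=f(x)$ with $x\in\partial G$, take a supporting (outer-normal) functional $\eta\in X^{\ast}$ of $G$ at $x$, and use monotonicity of $f$ to show that a suitable element of $X=X^{\ast\ast}$ --- extracted from $x$ and $\eta$ via the reflexive identification and the duality map --- is a nontrivial tangent functional to $K$ at $y^{\ast}$ in the sense of Theorem~\ref{Th_15}. Theorem~\ref{Th_15} then yields that $K$ is convex, and its closedness and nonempty interior have already been established.

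For part (ii), positive homogeneity of $f$ furnishes the convex domain explicitly: one takes $G=\{x\in X:\Phi(x)\le 1\}$, where $\Phi$ is the gauge attached to $f$ --- for instance $\Phi(x)^{2}=\langle f(x),x\rangle$ (nonnegative by monotonicity, positively $2$-homogeneous by homogeneity of $f$), or the Minkowski functional of $f^{-1}\!\left(B_{1}^{X^{\ast}}(0)\right)$ --- so that $0$ is an interior point of $G$. If $f$ happens to be a potential operator, Lemma~\ref{L_8} applies directly and $f(G)$ is convex. In the general positively homogeneous case, $f$ carries each ray through $0$ to a ray through $0$, so $f(G)$ is star-shaped about $0$; homogeneity then reduces the tangent-functional condition of Theorem~\ref{Th_15} for $f(G)$ to the behaviour of $f$ on the unit sphere, and monotonicity (with radial continuity replacing hemicontinuity in the limiting arguments) provides the required supporting functionals.

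The step I expect to be the main obstacle is, in both parts, the production of a nontrivial tangent functional to $f(G)$ on a dense subset of its boundary: this is the only place where the monotone structure of $f$ is genuinely exploited, and it amounts to transporting a supporting hyperplane of $G$ in $X$ to a supporting hyperplane of $f(G)$ in $X^{\ast}$. The reflexivity and strong-convexity hypotheses are exactly what make this transport available, since they render the duality map and its inverse single-valued and continuous. By comparison, the surjectivity, the boundedness of preimages of convergent sequences, and the weak-closure arguments are all routine for coercive monotone hemicontinuous operators.
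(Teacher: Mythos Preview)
For part~(i) your plan matches the paper's: the paper simply asserts that the claim ``follows from Theorem~\ref{Th_15}'', and your outline --- closedness of $f(G)$ via the Minty argument, nonempty interior via Browder--Minty surjectivity, then the tangent-functional criterion --- is precisely the verification behind that one-line proof.

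For part~(ii) the paper is much more direct than your sketch. Instead of splitting into a potential case and a ``general'' case, the paper observes that positive homogeneity of degree $\alpha$ makes the usual integral potential explicit,
\[
F(x)\;=\;\int_{0}^{1}\langle f(tx),x\rangle\,dt\;=\;\frac{1}{\alpha+1}\,\langle f(x),x\rangle,
\]
so that $f$ is (the subdifferential of) a convex functional and Lemma~\ref{L_8}/Corollary~\ref{C_8} applies at once: the sublevel sets $\{x:F(x)\le C\}$ are exactly the ``convex subsets of $X$ defined by a functional depending on $f$'' named in the statement, and they are carried to convex subsets of $X^{\ast}$. Your gauge $\Phi(x)^{2}=\langle f(x),x\rangle$ is this very $F$ up to a constant, so you already had the right object in hand; the detour through star-shapedness and a second invocation of Theorem~\ref{Th_15} is unnecessary (and, as written, leaves the closedness and interior hypotheses of Theorem~\ref{Th_15} unverified). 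The paper's route both shortens the argument and makes clear why positive homogeneity is exactly the hypothesis that lets one close the loop with Lemma~\ref{L_8}.
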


The proof of the (i) follows from Theorem \ref{Th_15}, and the proof of the
(ii) immediately follows from the presentation of the corresponding
functional 
\begin{equation*}
F\left( x\right) \equiv \underset{0}{\overset{1}{\int }}\left\langle \
f\left( tx\right) ,x\right\rangle dt\equiv \frac{1}{\alpha +1}\left\langle \
f\left( x\right) ,x\right\rangle ,
\end{equation*}%
where $\alpha $ is a exponent of homogeneity.

\section{\label{Sec_6}Examples}

1. Let $X$ be a reflexive Banach space, $J$ be a duality operator $%
J:X\longrightarrow X^{\ast }$, generated by a strongly monotone growing
continuous function (ee, e.g. [15]) 
\begin{equation*}
\Phi :R_{+}^{1}\longrightarrow R_{+}^{1},\quad \Phi \left( 0\right) =0,\quad
\Phi \left( \tau \right) \nearrow \infty \text{ at }\tau \nearrow \infty ,
\end{equation*}%
$\varphi :R_{+}^{1}\longrightarrow R_{+}^{1}$ be some mapping translating
the connected set to connected set (i.e. connected mapping) that satisfies
the condition: for each interval $I\subset R_{+}^{1}$ there exists $\sup
\left\{ \varphi \left( \tau \right) \left\vert \ \tau \in I\right. \right\}
=\varphi \left( \tau _{I}\right) $, moreover, here maybe $\varphi \left(
\tau _{I}\right) =\infty $.

Let $X$ and $X^{\ast }$\ be strongly convex spaces, $f:$\ $X\longrightarrow
X^{\ast }$ be a mapping having the form as in the following equation 
\begin{equation}
f\left( x\right) \equiv \varphi \left( \left\Vert x\right\Vert _{X}\right) \
J\left( x\right) =y,\quad y\in X^{\ast }.  \label{6.1}
\end{equation}

\begin{theorem}
\label{Th_16} Under the above conditions the equation (\ref{6.1}) solvable
for any $y\in X^{\ast }$ satisfies of the following inequation 
\begin{equation*}
\left\Vert y\right\Vert _{X^{\ast }}\leq \sup \left\{ \varphi \left(
r\right) \Phi \left( r\right) \left\vert \ r\geq 0\right. \right\} =\varphi
\left( r_{0}\right) \Phi \left( r_{0}\right) .
\end{equation*}
\end{theorem}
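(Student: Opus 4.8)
The plan is to represent $f$ as a radial rescaling of the duality map $J$, to identify the image $f\left(B_{r_{0}}^{X}\left(0\right)\right)$ of a suitable closed ball as again a closed ball (hence a closed convex body with nonempty interior), and then to invoke Theorem~\ref{Th_10} for $Y\equiv X^{\ast}$ (so that $Y^{\ast}=X^{\ast\ast}=X$ by reflexivity). We may assume $r_{0}>0$, since $r_{0}=0$ forces $\varphi\left(r_{0}\right)\Phi\left(r_{0}\right)=0$ and then the only admissible $y$ is $y=0=f\left(0\right)$. First I would record the two structural facts about $J$ that make the reduction work: because $X$ and $X^{\ast}$ are strongly convex and reflexive, $J:X\longrightarrow X^{\ast}$ is a monotone bijection with $\left\|J\left(x\right)\right\|_{X^{\ast}}=\Phi\left(\left\|x\right\|_{X}\right)$ and $\left\langle J\left(x\right),x\right\rangle=\Phi\left(\left\|x\right\|_{X}\right)\left\|x\right\|_{X}$; since $\Phi$ is strictly increasing from $\Phi\left(0\right)=0$, it follows that $J$ carries each sphere $S_{r}^{X}\left(0\right)$ onto $S_{\Phi\left(r\right)}^{X^{\ast}}\left(0\right)$ and the ball $B_{r}^{X}\left(0\right)$ onto $B_{\Phi\left(r\right)}^{X^{\ast}}\left(0\right)$.

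Next I would compute $f\left(B_{r_{0}}^{X}\left(0\right)\right)$. Writing $f\left(x\right)=\varphi\left(\left\|x\right\|_{X}\right)J\left(x\right)$, on the sphere $\left\|x\right\|_{X}=r$ we get $f\left(S_{r}^{X}\left(0\right)\right)=\varphi\left(r\right)S_{\Phi\left(r\right)}^{X^{\ast}}\left(0\right)=S_{\varphi\left(r\right)\Phi\left(r\right)}^{X^{\ast}}\left(0\right)$, so that
\begin{equation*}
f\left(B_{r_{0}}^{X}\left(0\right)\right)=\bigcup_{0\leq r\leq r_{0}}S_{\varphi\left(r\right)\Phi\left(r\right)}^{X^{\ast}}\left(0\right).
\end{equation*}
The set of radii $R:=\left\{\varphi\left(r\right)\Phi\left(r\right)\mid 0\leq r\leq r_{0}\right\}$ is a connected subset of $R_{+}^{1}$ (here the hypothesis that $\varphi$ is a connected mapping and that $\Phi$ is continuous is used), it contains $0$ (at $r=0$, where $\Phi\left(0\right)=0$), it contains $\varphi\left(r_{0}\right)\Phi\left(r_{0}\right)$, and by the standing assumption $\varphi\left(r_{0}\right)\Phi\left(r_{0}\right)=\sup_{r\geq 0}\varphi\left(r\right)\Phi\left(r\right)$ it is bounded above by $\varphi\left(r_{0}\right)\Phi\left(r_{0}\right)$; hence $R=\left[0,\varphi\left(r_{0}\right)\Phi\left(r_{0}\right)\right]$ and therefore $f\left(B_{r_{0}}^{X}\left(0\right)\right)=B_{\varphi\left(r_{0}\right)\Phi\left(r_{0}\right)}^{X^{\ast}}\left(0\right)$. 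In particular, taking $G=B_{r_{0}}^{X}\left(0\right)$, the image $f\left(G\right)$ is a closed convex set with nonempty interior, so condition ii) (equivalently i)) of Theorem~\ref{Th_7} holds.

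Finally I would check the geometric condition of Theorem~\ref{Th_10}. Take $\partial U^{\ast}$ to be the sphere $S_{r_{0}}^{X}\left(0\right)$, regarded as a subset of $Y^{\ast}=X^{\ast\ast}=X$, which is the boundary of the closed bounded balanced absorbing set $B_{r_{0}}^{X}\left(0\right)\subset X$. Given $y^{\ast}\in S_{r_{0}}^{X}\left(0\right)$ I would choose the point $x:=y^{\ast}\in G$; then, using $\left\|y^{\ast}\right\|_{X}=r_{0}$, the identity $\left\langle f\left(y^{\ast}\right),y^{\ast}\right\rangle=\varphi\left(r_{0}\right)\Phi\left(r_{0}\right)r_{0}$, and the hypothesis $\left\|y\right\|_{X^{\ast}}\leq\varphi\left(r_{0}\right)\Phi\left(r_{0}\right)$,
\begin{equation*}
\left\langle f\left(y^{\ast}\right)-y,y^{\ast}\right\rangle=\varphi\left(r_{0}\right)\Phi\left(r_{0}\right)r_{0}-\left\langle y,y^{\ast}\right\rangle\geq\varphi\left(r_{0}\right)\Phi\left(r_{0}\right)r_{0}-\left\|y\right\|_{X^{\ast}}r_{0}\geq 0,
\end{equation*}
so the required relation $\left\{\left\langle f\left(x\right)-y,y^{\ast}\right\rangle\right\}\cap\left[0,\infty\right)\neq\varnothing$ holds for every $y^{\ast}\in\partial U^{\ast}$. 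Theorem~\ref{Th_10} then yields $y\in f\left(G\right)=f\left(B_{r_{0}}^{X}\left(0\right)\right)$, i.e. the equation $(\ref{6.1})$ is solvable, which is exactly the claim.

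I expect the convexity step to be the main obstacle: one must justify that the radial rescaling $x\mapsto\varphi\left(\left\|x\right\|_{X}\right)J\left(x\right)$ sends $B_{r_{0}}^{X}\left(0\right)$ onto an honest ball. This rests on (a) $J$ mapping balls onto balls, available from the bijectivity of the duality map in strongly convex reflexive spaces (as already used in the proof of Theorem~\ref{Th_14}), and (b) the range of $r\mapsto\varphi\left(r\right)\Phi\left(r\right)$ on $\left[0,r_{0}\right]$ being exactly $\left[0,\varphi\left(r_{0}\right)\Phi\left(r_{0}\right)\right]$, where the ``connected mapping'' property of $\varphi$ together with the attainment of the supremum at $r_{0}$ are essential; were $\varphi$ not Darboux, $f\left(B_{r_{0}}^{X}\left(0\right)\right)$ could be a non-convex union of concentric spheres and the reduction to Theorem~\ref{Th_10} would break down.
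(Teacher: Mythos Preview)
Your proof is correct and follows essentially the same route as the paper: both represent $f\left(x\right)=\varphi\left(\left\Vert x\right\Vert_{X}\right)\Phi\left(\left\Vert x\right\Vert_{X}\right)x^{\ast}$ via the bijective duality map on the strongly convex reflexive pair $\left(X,X^{\ast}\right)$, deduce that $f$ carries spheres $S_{r}^{X}\left(0\right)$ onto spheres and hence balls onto balls in $X^{\ast}$, and then invoke Theorem~\ref{Th_10} with the choice $x=y^{\ast}\in S_{r_{0}}^{X}\left(0\right)$ to obtain $\left\langle f\left(x\right)-y,x\right\rangle\geq 0$. Your union-of-spheres connectedness argument for the convexity of $f\left(B_{r_{0}}^{X}\left(0\right)\right)$ is more explicit than the paper's (which opens by citing Lemma~\ref{L_8} before giving the same sphere-to-sphere picture), but the two arguments are otherwise interchangeable.
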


\begin{proof}
In the beginning note that according to Lemma \ref{L_8} the image $f\left(
B_{r}^{X}\left( 0\right) \right) \subset X^{\ast }$ of the ball $%
B_{r}^{X}\left( 0\right) \subset X$ is a convex subset of $X^{\ast }$.
According to spaces $X$ and $X^{\ast }$ be strongly convex, therefore the
duality opertator $J$ is a bijective mapping (see, [5, 15, 23]), moreover
the image of each ball $B_{r_{0}}^{X}\left( 0\right) \subset X$ is a ball $%
B_{r_{1}}^{X^{\ast }}\left( 0\right) \subset X^{\ast }$, where $r_{1}=\Phi
\left( r_{0}\right) $. Whence implies that the mapping $f$ can be
represented as 
\begin{equation}
f\left( x\right) \equiv \varphi \left( \left\Vert x\right\Vert _{X}\right) \
\Phi \left( \left\Vert x\right\Vert _{X}\right) \ x^{\ast },\quad \forall
x\in B_{r}^{X}\left( 0\right) ,  \label{6.2}
\end{equation}%
as $\ J\left( x\right) \equiv \Phi \left( \left\Vert x\right\Vert
_{X}\right) \ x^{\ast }$, where the functional $x^{\ast }\in S_{1}^{X^{\ast
}}\left( 0\right) \subset X^{\ast }$ is the functional generating the norm $%
\left\Vert x\right\Vert _{X}$ .

Thus the image $f\left( S_{r_{0}}^{X}\left( 0\right) \right) $\ of each
sphere $S_{r_{0}}^{X}\left( 0\right) \subset X$ be a the sphere $%
S_{r_{1}}^{X^{\ast }}\left( 0\right) \subset X^{\ast }$ under mapping $f$.
Consequently, since $x^{\ast }\in S_{1}^{X^{\ast }}\left( 0\right) \subset
X^{\ast }$ from (\ref{6.2}) implies that the image $f\left(
B_{r_{0}}^{X}\left( 0\right) \right) $\ of each ball $B_{r_{0}}^{X}\left(
0\right) \subset X$ be a the ball $B_{r_{1}}^{X^{\ast }}\left( 0\right)
\subset X^{\ast }$ with the corresponding radius $r_{1}=r_{1}\left(
r_{0}\right) $.

Then use the Theorem \ref{Th_10} we get that the equation (\ref{6.1}) is
solvable for any $y\in X^{\ast }$ satisfying inequation 
\begin{equation*}
\left\vert \left\langle y,x\right\rangle \right\vert \leq r\varphi \left(
r\right) \Phi \left( r\right) ,\quad \forall x\in S_{r}^{X}\left( 0\right)
,\ r>0.
\end{equation*}

In other words, the equation (\ref{6.1}) is solvable for any $y\in X^{\ast }$
satisfying follows there exists such $r>0$ that $\left\Vert y\right\Vert
_{X^{\ast }}\leq \varphi \left( r\right) \Phi \left( r\right) $.
\end{proof}

2. On the bounded domain $\Omega \subset R^{n}$, $n\geq 1$, with
sufficiently smooth boundary $\partial \Omega $ consider the following
problem 
\begin{equation}
f\left( u\right) \equiv -\Delta u+\psi \left( u\right) \ni h\left( x\right)
,\quad x\in \Omega ,\quad u\left\vert \ _{\partial \Omega }=0,\right.
\label{6.3}
\end{equation}%
where $\Delta $ is the laplacian, $\psi $ be some mapping acting from $%
H_{0}^{1}\equiv W_{0}^{1,2}\left( \Omega \right) $ (see, [41]) to $%
L_{2}\left( \Omega \right) $ such that for each function $u\in H_{0}^{1}$
the image $\psi \left( u\right) $ is the set of functions $\left\{ v\left(
x\right) \right\} \subset L^{\infty }\left( \Omega \right) $ that has the
following representation 
\begin{equation*}
v\left( x\right) =\left\{ 
\begin{array}{c}
1,\quad x\in \left\{ y\in \Omega \left\vert \ u\left( x\right) >0\right.
\right\} \\ 
w\in L^{\infty }\left( \Omega \right) ,\quad x\in \Omega _{0}\equiv \left\{
y\in \Omega \left\vert \ u\left( x\right) =0\right. \right\} \\ 
-1,\quad x\in \left\{ y\in \Omega \left\vert \ u\left( x\right) <0\right.
\right\}%
\end{array}%
,\right.
\end{equation*}%
moreover $\left\vert w\left( x\right) \right\vert \leq 1$ a.e. on $\Omega
_{0}$, and $\left[ -1,1\right] \subset \func{Im}\psi $. Here $%
W_{0}^{1,2}\left( \Omega \right) $\ is the Sobolev space of functions and $%
W^{-1,2}\left( \Omega \right) $ is its dual space.

In other words, mapping $f$ acts from $H_{0}^{1}$ to $H^{-1}\equiv
W^{-1,2}\left( \Omega \right) $, in this case, inclusion (\ref{6.3}) is
understood in the following sense the image $f\left( u\right) $ of each
function $u\in H_{0}^{1}$ is a set $-\Delta u+\left\{ v\left( x\right)
\right\} $.

\begin{theorem}
\label{Th_17} Let the above conditions on the problem (\ref{6.3}) be
fulfilled. Then for any $h\in H^{-1}$ there exists a solution $u\left(
x\right) $ of the problem (\ref{6.3}), that belongs to the space $H_{0}^{1}$.
\end{theorem}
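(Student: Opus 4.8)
The plan is to recognise the inclusion (\ref{6.3}) as the Euler--Lagrange relation $h\in\partial F(u)$ for a convex coercive potential $F$, and then to deduce solvability from Theorem \ref{Th_10} together with the convexity criterion of Section \ref{Sec_5}. Put $X:=H_0^1\equiv W_0^{1,2}(\Omega)$ and $Y:=H^{-1}\equiv W^{-1,2}(\Omega)$, so that $Y=X^*$ and $X$ is a reflexive (indeed Hilbert) space. The prescribed graph of $v$ is exactly the subdifferential of the convex continuous functional $\Phi(u)=\int_\Omega|u|\,dx$, and the requirement $[-1,1]\subset\operatorname{Im}\psi$ says that this full subdifferential is realised; on the other hand $-\Delta$ is simultaneously the duality (Riesz) operator $J\colon X\to X^*$ and the subdifferential of $u\mapsto\tfrac12\|\nabla u\|_{L^2(\Omega)}^2$. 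Hence $f=\partial F$ with
\[
F(u)=\tfrac12\|\nabla u\|_{L^2(\Omega)}^2+\int_\Omega|u|\,dx ,
\]
a convex, continuous, strictly convex and coercive functional on $X$ (coercivity and strict convexity coming from the gradient term through the Poincare inequality). Thus solving (\ref{6.3}) for a given $h\in H^{-1}$ amounts to showing $h\in\operatorname{Im}f$.

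Fix $h\in H^{-1}$ and set $r:=\|h\|_{H^{-1}}+1$. Let $c_\Omega$ be the embedding constant in $\|u\|_{L^1}\le c_\Omega\|u\|_X$, put $C_r:=\tfrac12 r^2+c_\Omega r$ and $G_r:=\{u\in X:F(u)\le C_r\}$; this is a convex set generated by the potential, and from $\tfrac12\|\nabla u\|_{L^2}^2\le F(u)\le\tfrac12\|\nabla u\|_{L^2}^2+c_\Omega\|u\|_X$ one obtains $B_r^X(0)\subseteq G_r\subseteq B_{\sqrt{2C_r}}^X(0)$. By Corollary \ref{C_8} (the subdifferential of a convex functional carries a potential-sublevel set to a convex set) $f(G_r)$ is convex in $H^{-1}$; moreover $F^*$ is finite on all of $H^{-1}$, since $F^*(u^*)\le(\tfrac12\|\nabla\cdot\|_{L^2}^2)^*(u^*)=\tfrac12\|u^*\|_{H^{-1}}^2$, so $\partial F$ is surjective onto $H^{-1}$ and $f(G_r)$ contains an $H^{-1}$-neighbourhood of $0\in f(0)$, i.e.\ $\operatorname{int}f(G_r)\neq\varnothing$. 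Hence condition i) of Theorem \ref{Th_7}, and thereby the structural hypothesis of Theorem \ref{Th_10}, is satisfied with $G=G_r$. (Alternatively, one verifies that $f(G_r)$ is closed, using weak closedness of $f=\partial F$ and weak compactness of $G_r$, and invokes branch ii).)

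It remains to check the separation/coercivity hypothesis of Theorem \ref{Th_10}. Take $\partial U^*$ to be the sphere of radius $r$ in $Y^*=X=H_0^1$. Given $y^*\in\partial U^*$ put $x:=y^*\in B_r^X(0)\subseteq G_r$ and pick $v:=\operatorname{sgn}(x)\in\psi(x)$; then
\[
\langle f(x)-h,y^*\rangle\ni\langle-\Delta x+v,x\rangle-\langle h,x\rangle=\|\nabla x\|_{L^2}^2+\|x\|_{L^1}-\langle h,x\rangle\ge\|x\|_X^2-\|h\|_{H^{-1}}\|x\|_X=r\bigl(r-\|h\|_{H^{-1}}\bigr)>0 ,
\]
so $\{\langle f(x)-h,y^*\rangle\}\cap(0,\infty)\neq\varnothing$. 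All hypotheses of Theorem \ref{Th_10} (case i) now hold for $G_r$ and the element $h$, so $h\in f(G_r)$: there exists $u\in G_r\subset H_0^1$ with $h\in f(u)=-\Delta u+\psi(u)$, which is precisely an $H_0^1$-solution of (\ref{6.3}). Since $h$ was arbitrary, the theorem follows. (One may equally route the last step through Corollary \ref{C_7}(iii), or through Theorem \ref{Th_14} after noting that $f$ is odd.)

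The heart of the argument, and the step I expect to require the most care, is the structural one: that $f(G_r)$ is convex with nonempty interior. Corollary \ref{C_8} and Lemma \ref{L_8} are phrased for single-valued potential operators, so one has to re-run the Fenchel--Moreau argument for the genuinely multivalued $\partial F$, using the equivalence $x^*\in\partial F(x)\iff x\in\partial F^*(x^*)$ to write $f(G_r)=\{x^*\in H^{-1}:\partial F^*(x^*)\cap G_r\neq\varnothing\}$ and read off its convexity; the nonempty-interior (equivalently, closedness) claim then reduces to the finiteness and continuity of $F^*$ on $H^{-1}$, itself a consequence of the coercivity of $F$. A secondary point, routine for $\Phi=\|\cdot\|_{L^1}$ but worth stating, is that the multiplier $h+\Delta u\in\partial\Phi(u)$ produced at the end admits an $L^\infty$ representative with values in $[-1,1]$, so that the solution is of exactly the form in which problem (\ref{6.3}) was posed. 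The remaining ingredients --- the Poincare estimate, the coercivity inequality and the choice of $r$ --- are entirely routine.
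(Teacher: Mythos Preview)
Your proof is correct and follows essentially the same approach as the paper: both recognise $f=\partial F$ for the convex potential $F(u)=\tfrac12\|\nabla u\|_{L^2}^2+\|u\|_{L^1}$, invoke the Section~\ref{Sec_5} criterion (Lemma~\ref{L_8}/Corollary~\ref{C_8}) to obtain convexity of the image of a sublevel set, and then feed the coercivity estimate $\langle f(u)-h,u\rangle\ge\|u\|_{H_0^1}^2-\|h\|_{H^{-1}}\|u\|_{H_0^1}$ into Theorem~\ref{Th_10}. Your execution is a little more direct---you take $G_r$ to be a sublevel set of $F$ from the outset and bound $F^*$ by the conjugate of the quadratic part, whereas the paper routes through Lemma~\ref{L_7} and computes $\Phi^*$ explicitly as an infimal convolution---but these are cosmetic differences within the same strategy.
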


\begin{proof}
For the proof enough to show that all condition of the Theorem \ref{Th_10}
fulfills for the mapping $f$. We will show that for any $h\in H^{-1}$ there
exists such set $G\subset H_{0}^{1}$ that $f\left( G\right) $ be a convex
body and exists such subset of $G$ that be a boundary $\partial G_{1}$ of an
absorbing subset $G_{1}$ on which takes place the inequation 
\begin{equation*}
\left\langle f\left( u\right) -h,u\right\rangle >0,\quad \forall u\in
\partial G_{1}\subset G\subset H_{0}^{1}.
\end{equation*}%
Here we will use the Lemma \ref{L_7}, namely here enough to choose such
balls $B_{r_{1}}^{H_{0}^{1}}\left( 0\right) $, $B_{r_{2}}^{H_{0}^{1}}\left(
0\right) \subset $ $H_{0}^{1}$ ($0<r_{1}<r_{2}$) that satisfy the inequation 
$f\left( B_{r_{1}}^{H_{0}^{1}}\left( 0\right) \right) \subseteq M\subseteq $ 
$f\left( B_{r_{2}}^{H_{0}^{1}}\left( 0\right) \right) $, where $M$ be a
convex subset. We should be noted choosing of balls is dependent on the
given $h\in H^{-1}$.

In the beginning, is necessary to show the correctness of some inequalities.
(Relative to the connectivity of the image of the connective set under
mappings that don't are continuous can be to lead the following result 
\footnote{%
Let $X$, $Y$ be linearly connected $LVTS$, and $f:D\left( f\right) \subseteq
X\longrightarrow Y$ be a mapping, $G\subseteq D\left( f\right) $ be a
locally connected subset. Introduce a class of subset of $Y$ and also the
corresponding subset of $R^{1}$ 
\begin{equation*}
\aleph _{f}\left( \gamma _{X},y^{\ast }\right) =\left\{ \left\langle
y,y^{\ast }\right\rangle =\tau \in R^{1}\left\vert \ y\in f\left( x\right)
,\ x\in \gamma _{X}\left( x_{1},x_{2}\right) ,\ x_{1},x_{2}\in G\right.
\right\} ,
\end{equation*}%
where $x_{1},x_{2}\in G$ be some points, and $\gamma _{X}\left(
x_{1},x_{2}\right) \subset G$ be a curve connecting of these points, $%
y^{\ast }\in Y^{\ast }$ be a linear continuous functional.
\par
\begin{theorem}
\label{Th_s}(see, [32]and its references) Let $X$, $Y$ be linearly connected 
$LVTS$, and $f:D\left( f\right) \subseteq X\longrightarrow Y$ be a mapping, $%
G\subseteq D\left( f\right) $ be a locally connected subset. Then if for any 
$x_{1},x_{2}\in G$ there exists such curve $\gamma _{X}\left(
x_{1},x_{2}\right) $\ that the subset $\aleph _{f}\left( \gamma _{X},y^{\ast
}\right) $ be connected\ for each linear continuous functional $y^{\ast }\in
Y^{\ast }$ then $f\left( G\right) $ be a connected set.
\end{theorem}
\par
See, Soltanov K. N., On connectivity of sets and the image of a
discontinuous mappings. On nonlinear mappings. Proc. Inter. Topol. Conf.,
Baku-87, 1989, v II, 166-173 (in russ.)}.)

It isn't difficult to see 
\begin{equation*}
\left\langle f\left( u\right) ,u\right\rangle =\left\langle -\Delta u+\psi
\left( u\right) ,u\right\rangle =\left\Vert \nabla u\right\Vert
_{H}^{2}+\left\Vert u\right\Vert _{L_{1}}\geq \left\Vert \Delta u\right\Vert
_{H^{-1}}^{2},
\end{equation*}%
where $\nabla \equiv \left( \frac{\partial }{\partial x_{1}},...,\frac{%
\partial }{\partial x_{n}}\right) $, \ $\Delta \equiv \nabla ^{\ast }\circ
\nabla $, $H\equiv L_{2}\left( \Omega \right) $.\ Moreover, there exists
constant $c\geq 1$ such that 
\begin{equation}
\left\langle f\left( u\right) ,u\right\rangle =\left\Vert \nabla
u\right\Vert _{H}^{2}+\left\Vert u\right\Vert _{L_{1}}\leq c\left(
\left\Vert \Delta u\right\Vert _{H^{-1}}^{2}+\left\Vert \psi \left( u\right)
\right\Vert _{H^{-1}}\right)  \label{6.4}
\end{equation}%
and on each sphere $S_{r_{1}}^{H_{0}^{1}}\left( 0\right) \subset $ $%
H_{0}^{1} $, $r>1$ for some constants $c_{1},c_{2}>0$\ takes place 
\begin{equation}
\left\Vert \Delta u\right\Vert _{H^{-1}}^{2}\leq c_{1}\left\Vert f\left(
u\right) \right\Vert \leq \left\Vert \Delta u\right\Vert
_{H^{-1}}^{2}+\left\Vert \psi \left( u\right) \right\Vert _{H^{-1}}\leq
c_{2}\left\Vert \Delta u\right\Vert _{H^{-1}}^{2}.  \label{6.5}
\end{equation}

According to the proof of Lemma \ref{L_8}, for the proof that the image $%
f\left( B_{r}^{H_{0}^{1}}\left( 0\right) \right) \subset H^{-1}$ of a ball $%
B_{r}^{H_{0}^{1}}\left( 0\right) \subset H_{0}^{1}$, $r\geq 1$ be a convex
set with the nonempty interior we will use the convexity of the
corresponding functional (see, [16, 19]). Since mapping $f$ is the
subdifferential of the convex functional 
\begin{equation*}
\Phi \left( u\right) \equiv \frac{1}{2}\left\Vert \nabla u\right\Vert
_{H^{n}}^{2}+\left\Vert u\right\Vert _{L_{1}}\equiv \Phi _{0}\left( u\right)
+\Phi _{1}\left( u\right)
\end{equation*}%
due to well-known results (see, [16, 19]) one can assume ball $%
B_{r}^{H_{0}^{1}}\left( 0\right) \subset H_{0}^{1}$ be an effective set of
the functional $\Phi $. Consequently, it is sufficient to examine an
effective set of dual-functional $\Phi ^{\ast }$ of functional $\Phi $. Due
to sub-differentiability of the functional $\Phi $, at least on the $int\
dom\ \Phi $, the inclusions $int\ dom\ \Phi \subseteq \func{Im}\left(
\partial \Phi \right) \subset dom\ \Phi ^{\ast }$ hold.

As the functional is the sum of functionals $\Phi _{0}$ and $\Phi _{1}$ its
dual $\Phi ^{\ast }$ be an infimal convolution of functionals $\Phi _{0}$
and $\Phi _{1}$ therefore, it is necessary to define their dual functionals.
It is known that (see, [30]) under $v^{\ast }\in dom\Phi _{0}^{\ast }$ we
have $\Phi _{0}^{\ast }\left( v^{\ast }\right) \equiv \frac{1}{2}\left\Vert
v^{\ast }\right\Vert _{H^{n}}^{2}$, moreover 
\begin{equation*}
dom\Phi _{0}^{\ast }\equiv \left\{ v^{\ast }\in H^{n}\left\vert \ \left\Vert
v^{\ast }\right\Vert _{H^{n}}\leq r\right. \right\} ,
\end{equation*}%
and under $u^{\ast }\in dom\Phi _{1}^{\ast }\subset L^{\infty }\left( \Omega
\right) $ we have $\Phi _{1}^{\ast }\left( u^{\ast }\right) \equiv {\large %
\varkappa }\left( u^{\ast }\left\vert \ B_{1}^{L^{\infty }}\left( 0\right)
\right. \right) $, where\footnote{${\large \varkappa }\left( u^{\ast
}\left\vert \ M\right. \right) =\QATOPD\{ . {0\ \text{ if \ }u^{\ast }\in
M,}{\infty \text{ \ if \ }u^{\ast }\notin M}$} $B_{1}^{L^{\infty }}\left(
0\right) $ is the closed ball with the radius $r=1_{,}$ with the center in
zero of $L^{\infty }\left( \Omega \right) $, ${\large \varkappa }\left(
u^{\ast }\left\vert \ M\right. \right) $ be an indicator function of the set 
$M$ $\equiv B_{1}^{L^{\infty }}\left( 0\right) \equiv dom\Phi _{1}^{\ast }$.

Then we get for $h\in dom\Phi ^{\ast }\subset H^{-1}$ 
\begin{equation*}
\Phi ^{\ast }\left( h\right) \equiv \Phi _{0}^{\ast }\left( h^{\ast }\right)
+\Phi _{1}^{\ast }\left( u^{\ast }\right) h^{\ast }+u^{\ast }=h
\end{equation*}%
holds, and also $dom\Phi ^{\ast }=dom\Phi _{0}^{\ast }+dom\Phi _{1}^{\ast }$%
, moreover is known $\left( \Phi _{0}\circ \nabla \right) ^{\ast }=\nabla
^{\ast }\circ \Phi _{0}^{\ast }$ ([19]), in this case $\left( \Phi _{0}\circ
\nabla \right) ^{\ast }\left( h^{\ast }\right) =\Phi _{0}^{\ast }\left(
v^{\ast }\right) $, $\nabla ^{\ast }v^{\ast }=h^{\ast }$ ([16]).

Thus we obtain 
\begin{equation*}
\Phi ^{\ast }\left( h\right) \equiv \underset{u^{\ast },h^{\ast }}{\inf }%
\left\{ 
\begin{array}{c}
\frac{1}{2}\left\Vert v^{\ast }\right\Vert _{H^{n}}^{2}\left\vert \ h^{\ast
}+u^{\ast }=h,\quad h^{\ast }=\nabla ^{\ast }v^{\ast },\right. \\ 
h^{\ast }\in B_{1}^{H^{-1}}\left( 0\right) \subset H^{-1},u^{\ast }\in
B_{1}^{L^{\infty }}\left( 0\right) \subset L^{\infty }\left( \Omega \right)%
\end{array}%
\right\} .
\end{equation*}

Whence imply that $int\ dom\Phi ^{\ast }\neq \varnothing $, moreover $%
B_{r}^{H^{-1}}\left( 0\right) \subseteq dom\Phi ^{\ast }$. Consequently, $%
f\left( B_{r}^{H_{0}^{1}}\left( 0\right) \right) $ have nonempty interior,
and also other condition of Lemma \ref{L_8} fulfills for the mapping $f$ by
virtue of inequalities (\ref{6.4}) and (\ref{6.5}). Moreover, the $dom\Phi
^{\ast }$ expands according to the growth of the radius $r$, which proves
the correctness of the claim of Theorem\ \ref{Th_17}.
\end{proof}

3. Now we will lead one simple example on the application of one of the
fixed-point theorems that proved in this work.

Let $\Omega \subset R^{n}$ ($n\geq 1$) be a bounded domain with sufficiently
smooth boundary $\partial \Omega $, $H\equiv L_{2}\left( \Omega \right) $ be
a Lebesgue space that is the Gilbert space and $f:D\left( f\right) \subseteq
H\longrightarrow H$ be a nonlinear mapping. Assume the mapping $f$ as acting
in $H$ has the representation $f\left( u\right) \equiv u-\alpha \left\Vert
u-u_{0}\right\Vert _{H}^{2}\ \left( u-u_{0}\right) $, where $0<\alpha \leq
4^{-1}$. We want that $f:B_{1}^{H}\left( u_{0}\right) \subset
H\longrightarrow B_{1}^{H}\left( u_{0}\right) $ holds. Denote by $\widetilde{%
u}=u-u_{0}$ for any $u\in B_{1}^{H}\left( u_{0}\right) $ then $f\left(
u\right) \equiv u_{0}+\widetilde{u}-\alpha \left\Vert \widetilde{u}%
\right\Vert _{H}^{2}\widetilde{u}$. We will show that $\left\Vert f\left(
u\right) -u_{0}\right\Vert _{H}\leq 1$ 
\begin{equation*}
\left\Vert f\left( u\right) -u_{0}\right\Vert _{H}=\left\Vert u_{0}+%
\widetilde{u}-\alpha \left\Vert \widetilde{u}\right\Vert _{H}^{2}\widetilde{u%
}-u_{0}\right\Vert _{H}=\left\Vert \left( 1-\alpha \left\Vert \widetilde{u}%
\right\Vert _{H}^{2}\right) \widetilde{u}\right\Vert _{H}.
\end{equation*}%
Whence according to the definition of $\widetilde{u}$ fulfills $\left\Vert 
\widetilde{u}\right\Vert _{H}=\left\Vert u-u_{0}\right\Vert _{H}\leq 1$\
then we have 
\begin{equation*}
\left\Vert f\left( u\right) -u_{0}\right\Vert _{H}=\left\Vert \left(
1-\alpha \left\Vert \widetilde{u}\right\Vert _{H}^{2}\right) \widetilde{u}%
\right\Vert _{H}=\left( 1-\alpha \left\Vert \widetilde{u}\right\Vert
_{H}^{2}\right) \left\Vert \widetilde{u}\right\Vert _{H}<1
\end{equation*}%
due to condition on the $\alpha $. Hence one can claim for any $u\in
B_{1}^{H}\left( 0\right) $ the $\left\Vert f\left( u\right)
-u_{0}\right\Vert _{H}<1$, in the other words, $f\left( B_{1}^{H}\left(
u_{0}\right) \right) \subset B_{1}^{H}\left( u_{0}\right) $ holds.

Thus, if define the mapping $f_{1}\left( u\right) =u-f\left( u\right) $ then
we get 
\begin{equation*}
f_{1}\left( u\right) =u-f\left( u\right) =\alpha \left\Vert
u-u_{0}\right\Vert _{H}^{2}\left( u-u_{0}\right) =\alpha \left\Vert 
\widetilde{u}\right\Vert _{H}^{2}\widetilde{u}
\end{equation*}%
consequently, $f_{1}$\ be the Gateaux derivative of a convex functional $%
F_{1}$, i.e. $f_{1}\left( u\right) =\partial F_{1}\left( u\right) $, where $%
F_{1}\left( u\right) =\frac{1}{4}\alpha \left\Vert u-u_{0}\right\Vert
_{H}^{4}=\frac{1}{4}\alpha \left\Vert \widetilde{u}\right\Vert _{H}^{4}$.
According to the results of the above section, the image $f$ $\left(
B_{1}^{H}\left( u_{0}\right) \right) $ of the mapping $f$ is a convex set
with a nonempty interior. On the other hande, the necessary inequality
fulfills as for each $\widetilde{u}\in S_{1}^{H}\left( 0\right) $ we have 
\begin{equation*}
\left\langle f_{1}\left( u\right) ,\widetilde{u}\right\rangle =\left\langle
\alpha \left\Vert \widetilde{u}\right\Vert _{H}^{2}\widetilde{u},\widetilde{u%
}\right\rangle =\alpha \left\Vert \widetilde{u}\right\Vert _{H}^{4}>0.
\end{equation*}

Consequently, the mapping $f$ possesses in $B_{1}^{H}\left( u_{0}\right) $ a
fixed-point, i.e. there exists $u_{1}\in B_{1}^{H}\left( u_{0}\right) $ such
that $f\left( u_{1}\right) =u_{1}$.


\begin{thebibliography}{99}
\bibitem{1} Poincare H., Sur un theoreme de geometrie, Rendiconti Circolo
mat., Plermo, 1912, 33, 375-407

\bibitem{2} Dugundji J., Granas A., Fixed point theory, Springer Monographs
in Mathematics, Springer New York, 2003

\bibitem{3} Nirenberg L., Topics in Nonlinear Functional Analysis, 1974,
Courant Inst. Math. Sci.,N-Y Univ., AMS Providence

\bibitem{4} Krasnoselskii M. A., Zabreyko P. P., Geometrical Methods of
Nonlinear Analysis, Springer-Verlag Berlin Heidelberg 1984, XX, 412

\bibitem{5} Dunford N., Schwartz T., Linear operators, p. I, General theory,
Intersci. Publ., N-Y, London, 1958,

\bibitem{6} Kuratowski K., Topology, v I, 1966; v II, 1968, Academic Press,
N-Y and London,

\bibitem{7} Diestel J., \ Geometry of Banach Spaces-Selected Topics, 1975,
485, Lecture Notes in Math., Springer

\bibitem{8} Ells J., Foundations of the global analysis, Usp. Math. Sci.,
24,3, 1969,157-210 (in russ.)

\bibitem{9} Ells J., Fredholm structures, Usp. Math. Sci., 26, 6, 1971,
213-140 (in russ.)

\bibitem{10} Miranda C., Equazioni alle derivate parziali di tipo ellittico,
1955, Springer-Verlag

\bibitem{11} Caristi J., Fixed-point theoremsfor mappings satisfying
inwardness condition, Trans. AMS,1976, 215, 241-251

\bibitem{12} Edwards R. E., Functional Analysis: Theory and Applications,
Holt, Rinehart and Winston (1965) \ \ \ \ 

\bibitem{13} Rockafellar R. T., Convex analysis, 1970, Princeton, N-J,
Prins. Univ. Press,

\bibitem{14} Browder F. E., Petyshyn W. V., Approximation methods and the
generalized topology degree for nonlinear mappings in Banach spaces, J.
Funct. Anal. 3, 2, 1969\ \ 

\bibitem{15} Lions J-L., Quelques methodes de resolution des problemes aux
limites non lineaires, 1969, Dunod, Gauthier-Villars, Paris

\bibitem{16} Aubin J.- P., Ekeland I., Applied Nonlinear Analysis, 1984,
John Wiley and Sons, N-Y

\bibitem{17} Hammer P. C., Semispaces and the topology of convexity, Procc.
Symp. "Pure Math." , v. 7, Convexity, 1963, 305-317

\bibitem{18} Soltanov K. N., On nonlinear mappings and nonlinear equations,
Dokl. USSR, 1986, 289, 6, 1318-1323

\bibitem{19} Ekeland I., Temam R., Convex analysis and variational problems,
Study in Math. and iys Appl., 1976, Nort-Holland Publ., USA Els. C

\bibitem{20} Borisovich Yu. G., Zviagin V. G., Sapranov Yu. I., Fredholm's
nonlinear mappings and Leray-Schauder theory, Usp. Math. Sci. 1977, 32, 4
(in russ.)

\bibitem{21} Klee V. L., Convex sets in linear spaces, Duke Math. J., 1951,
18, 4, 875-893;\ 

\bibitem{22} Klee V. L., Convex sets in linear spaces, Duke Math. J. 18, 2,
1951, 444-466.

\bibitem{23} Klee V.L., Separation and support properties of convex sets a
survey, in "Control Theory and Calculus of Variat." (Ed. Balakrishnan A.
V.), Acad. Press, 1969, 235-305

\bibitem{24} Bishop E., Phelps R. R., Support functional of a convex set,
Proc. Symp. Pure Math., "Convexity", AMS, 1963,VII, 27-35

\bibitem{25} James R. C., Weakly compact sets, Trans. AMS, 1964, 113, 1,
129-140.\ 

\bibitem{26} Ioffe A. D., Tikhomirov V. M., Theory of extremal problems,
1974, Nauka, Moscow (in russ.)

\bibitem{27} Gale Y., Klee V. L., Continuous convex sets, Math. Scand.,
1959, 7, 379-391

\bibitem{28} Soltanov K. N., Remarks on Separation of Convex Sets,
Fixed-Point Theorem and Applications in Theory of Linear Operators, Fixed
Point Theory and Applic. ,V. 2007, Art. ID 80987, Hindawi P. C.

\bibitem{29} Spanier E. H., Algebraic Topology, McGRAW-Hill\ Book C. N-Y,
London, 1966

\bibitem{30} Pohozaev S. I., On nonlinear operators with the weakly closed
images and quasilinear elliptic equations, Math. Sb., 78, 2, 1969, 236-259
(in russ.)

\bibitem{31} Pohozaev S. I., On solvability of nonlinear equations with odd
operators, Funct. Anal. and its appl., 1967, 1, 3, 66-73. (in russ.)

\bibitem{32} Soltanov K. N., Some applications of the nonlinear analysis to
the nonlinear differential equations, 2002, Elm, Baku, 296 p. (in russ.)

\bibitem{33} Soltanov K. N., Nonlinear equations in Banach spaces with
continuous operators and differential equations, Funct. Anal. \& appl., 33,1
(1999)

\bibitem{34} Soltanov K. N., On nonlinear equations with continuous
mappings, Nonlinear Boundary Problems, AS Ukrain, 1990, 2, 104-109 (in russ.)

\bibitem{35} Schaefer H. H., Topological vector spaces, 1966, Macmill. C.,
N-Y, London

\bibitem{36} Vainberg M. M., Variational Methods for the investigation of
non-linear operators, M. 1959, Engl. trans. San Francisco 1964.

\bibitem{37} Asplund E., Averaged norms, Israel J. Math., 1967, 5, 227-233

\bibitem{38} Soltanov K. N., Some remarks on the solvability of a nonlinear
equations and fixed points, Nonlinear Studies, 18, 4, 2011

\bibitem{39} Graves L. M., Some mapping theorems, Duke Math. J. 17, 2, 1950,
11-114

\bibitem{40 } Soltanov K. N., Some nonlinear equations of the nonstable
filtration type and embedding theorems. Nonlinear Analysis: T.M.\&A., 2006,
65, 11.

\bibitem{41} Sobolev S. L., Some applications of functional analysis in
mathematical physics, 1962, Sib. Div. AS USSR (in russ.)

\bibitem{42} Gajewski H., Groger K, Zacharias K., Nichtlineare
operatorgleichungen und operatordifferentialgleichungen, 1974, Math.
Monogr., Akad.-Verlag, Berlin

\bibitem{43} Soltanov K. N., To theory of normal solvability of nonlinear
equations, Dokl. AS USSR, 278, 1, 1984, 42-46;

\bibitem{44} Soltanov K. N., Ahmadov M. A., Solvability of equation of
Prandtl-von Mises type, theorems of embedding, Trans. NAS Azerb., 37 (1),
(2017)
\end{thebibliography}
\end{document}